\theoremstyle{plain}\newtheorem{thm}{Theorem}[section]
\newtheorem{lem}[thm]{Lemma}\newtheorem{prop}[thm]{Proposition}
\theoremstyle{definition}\newtheorem{defn}[thm]{Definition}
\newtheorem{rem}[thm]{Remark}
\newcommand{\N}{\mathbb{N}}
\newcommand{\R}{\mathbb{R}}
\newcommand{\one}{\mathbf{1}}\newcommand{\zer}{\mathbf{0}}
\newcommand{\sph}{\mathbf{S}}
\newcommand{\fix}{\textsc{Fix}}\newcommand{\ms}{\textsc{Min}}
\newcommand{\dir}{\textsc{Dir}}\newcommand{\mov}{\textsc{Mov}}
\newcommand{\lin}{\textsc{Lin}}\newcommand{\aff}{\textsc{Aff}}
\newcommand{\spn}{\textsc{Span}}\newcommand{\isom}{\textsc{Isom}}
\newcommand{\cay}{\textsc{Cay}}\newcommand{\DM}{\textsc{DM}}
\newcommand{\inv}{{inv}}
\newcommand{\onto}{\twoheadrightarrow}
\newcommand{\bigjoin}{\bigvee}\newcommand{\bigmeet}{\bigwedge}
\newcommand{\join}{\vee}\newcommand{\meet}{\wedge}
\newcommand{\lr}{\ell_R}\newcommand{\ls}{\ell_S}
\title[Euclidean Isometries]{Factoring euclidean isometries}\date{\today}
\author{Noel Brady}\address{Dept. of Math. U. of Oklahoma, Norman, OK 73019}
\email{nbrady@math.ou.edu}
\author{Jon McCammond}\address{Dept. of Math. UC Santa Barbara, Santa
  Barbara, CA 93106}
\email{jon.mccammond@math.ucsb.edu}
\begin{document}

\begin{abstract}
  Every isometry of a finite dimensional euclidean space is a product
  of reflections and the minimum length of a reflection factorization
  defines a metric on its full isometry group.  In this article we
  identify the structure of intervals in this metric space by
  constructing, for each isometry, an explicit combinatorial model
  encoding all of its minimal length reflection factorizations.  The
  model is largely independent of the isometry chosen in that it only
  depends on whether or not some point is fixed and the dimension of
  the space of directions that points are moved.
\end{abstract}

\subjclass[2010]{51M05,20F55}
\keywords{euclidean isometries, reflection factorizations, intervals,
  lattices, Dedekind-MacNeille completion}

\maketitle

Every good geometry book proves that each isometry of euclidean
$n$-space is a product of at most $n+1$ reflections and several
more-advanced sources include Scherk's theorem which identifies the
minimal length of such a reflection factorization from the basic
geometric attributes of the isometry under consideration
\cite{Scherk50, Dieudonne71, SnapperTroyer89, Taylor92}.  The
structure of the full set of minimal length reflection factorizations,
on the other hand, does not appear to have been given an elementary
treatment in the literature even though the proof only requires basic
geometric tools.\footnote{The only discussions of this issue that we
  have found in the literature use Wall's parameterization of the
  orthogonal group and the main results are stated in terms of
  inclusions of nondegenerate subspaces under an asymmetric bilinear
  form derived from the original isometry \cite{Taylor92, Wall63}.}
In this article we construct, for each isometry, an explicit
combinatorial model encoding all of its minimal length reflection
factorizations.  The model is largely independent of the isometry
chosen in that it only depends on whether or not some point is fixed
and the dimension of the space of directions that points are moved.

Analogous results for spherical isometries already exist and are easy
to state: when $w$ is an orthogonal linear transformation of $\R^n$
only fixing the origin, for example, there is a natural bijection
between minimal length factorizations of $w$ into reflections fixing
the origin and complete flags of linear subspaces in $\R^n$
\cite{BradyWatt02}.  In other words, the structure of all such
factorizations is encoded in the lattice of linear subspaces of $\R^n$
with one factorization for each maximal chain.  We construct a similar
poset for euclidean isometries but its structure is more complicated.
The motivation for constructing these combinatorial models is to use
them to analyze the structure of euclidean Artin groups.  See
\cite{Mc-lattice} and \cite{McSu-euclidean} for details.  It is with
this application in mind that we include an examination of the extent
to which these posets are lattices.

The article is structured as follows.  The first four sections
establish basic definitions, the middle sections define the
combinatorial models and establish our main result, and the final
sections explore the extent to which the constructed posets fail to be
complete lattices.

\section{Euclidean geometry}\label{sec:euclid}

In this section we review some elementary euclidean geometry with a
special emphasis on notation.  The main thing to note is that we
sharply distinguish between points and vectors as in
\cite{SnapperTroyer89} since this greatly clarifies the arguments used
latter in the article.

\begin{defn}[Points and vectors]\label{def:pt-vec}
  Throughout the article, $V$ denotes an $n$-dimensional real vector
  space with a positive definite inner product and $E$ denotes its
  affine analog where the location of the origin has been forgotten.
  The elements of $V$ are \emph{vectors} and the elements of $E$ are
  \emph{points}.  We use greek letters for vectors and roman letters
  for points.  There is a uniquely transitive action of $V$ on $E$.
  Thus, given a point $x$ and a vector $\lambda$ there is a unique
  point $y$ with $x + \lambda = y$ and given two points $x$ and $y$
  there is a unique vector $\lambda$ with $x + \lambda = y$.  We say
  that $\lambda$ is the vector from $x$ to $y$.  For any $\lambda \in
  V$, the map $x\mapsto x+\lambda$ is a \emph{translation} isometry
  $t_\lambda$ of $E$ and since $t_\mu t_\nu = t_{\mu + \nu} = t_\nu
  t_\mu$, the set $T = \{ t_\lambda \mid \lambda \in V\}$ is an
  abelian group.  For any point $x \in E$, the map $\lambda \mapsto
  x+\lambda$ is a bijection that identifies $V$ and $E$ but the
  isomorphism depends on this initial choice of a \emph{basepoint} $x$
  in $E$.  Lengths of vectors and angles between vectors are
  calculated using the usual formulas and distances and angles in $E$
  are defined by converting to vector-based calculations.
\end{defn}

\begin{defn}[Linear subspaces of $V$]
  A \emph{linear subspace} of $V$ is a subset closed under linear
  combination and every subset of $V$ is contained in a unique minimal
  linear subspace called its \emph{span}.  Each linear subspace $U$
  has an \emph{orthogonal complement} $U^\perp$ consisting of those
  vectors in $V$ orthogonal to all the vectors in $U$ and there is a
  corresponding orthogonal decomposition $V = U \oplus U^\perp$.  The
  \emph{codimension} of $U$ is the dimension of $U^\perp$.
\end{defn}

\begin{defn}[Affine subspaces of $E$]
  An \emph{affine subspace} of $E$ is any subset $B$ that contains
  every line determined by distinct points in $B$ and every subset of
  $E$ is contained in a unique minimal affine subspace called its
  \emph{affine hull}.  Associated with any affine subspace $B$ is its
  (linear) \emph{space of directions} $\dir(B) \subset V$ consisting
  of the collection of vectors connecting points in $B$.  The
  \emph{dimension} and \emph{codimension} of $B$ is that of its space
  of directions and a set of $k+1$ points in $E$ is in \emph{general
    position} when its affine hull has dimension $k$.
\end{defn}

\begin{defn}[Barycentric coordinates]
  Let $x_0, x_1, \ldots, x_k$ be $k+1$ points in general position in
  $E$, and let $B$ be its $k$-dimensional affine hull.  If we identify
  $E$ and $V$ by picking a basepoint than each point in $B$ can be
  expressed as a linear combination of the vectors from this origin to
  $x_i$ where the coefficients sum to $1$.  Since these coefficients
  turn out to be independent of our choice of basepoint, we can
  unambiguously write $\sum c_ix_i = c_0 x_0 + c_1 x_1 +\cdots +
  c_kx_k$ (with $c_i \in \R$ and $\sum c_i = 1$) to represent each
  point in $B$ without making such an identification.  These are
  called \emph{barycentric coordinates} on $B$.
\end{defn}

The intrinsic way barycentric coordinates are defined means when
$\{x_i\}$ and $\{y_i\}$ are points in general position with affine
hulls $B$ and $C$ and an isometry of $E$ sends each $x_i$ to $y_i$,
then it also sends the point $\sum c_i x_i \in B$ to the point $\sum
c_i y_i \in C$ with the same coefficients.

\begin{defn}[Affine subspaces of $V$]
  An \emph{affine subspace of $V$} is a translation of one of its
  linear subspaces.  In particular, every affine subspace $M$ can be
  written in the form $t_\mu(U) = U +\mu = \{ \lambda + \mu \mid
  \lambda \in U\}$ where $U$ is a linear subspace of $V$.  This
  representation is not unique, (since $U + \mu = U$ for all $\mu \in
  U$), but it can be made unique if we insist that $\mu$ is of minimal
  length or, equivalently, that $\mu$ be a vector in $U^\perp$, in
  which case we say $U +\mu$ is the \emph{standard form} of $M$.
\end{defn}

Under any identification of $E$ and $V$, the affine subspaces of $E$
are identified with those of $V$.  More precisely, each affine
subspace $B$ in $E$ corresponds to some $M = U + \mu$.  The linear
subspace $U = \dir(B)$ is canonical but the vector $\mu\in U^\perp$
depends on the choice of basepoint.

\section{Posets and lattices}\label{sec:poset}

For posets and lattices we generally follow \cite{EC1} and
\cite{DaPr02}.

\begin{defn}[Posets]\label{def:poset}
  Let $P$ be a partially ordered set.  When a minimum or a maximum
  element exists in $P$, it is denoted $\zer$ and $\one$,
  respectively, and posets containing both are \emph{bounded}. The
  \emph{dual} $P^*$ of a poset $P$ has the same underlying set but
  with the order reversed, and a poset is \emph{self-dual} when it and
  its dual are isomorphic.  For each $Q\subset P$ there is an induced
  \emph{subposet} structure on $Q$ which is simply the restriction of
  the order on $P$.  A subposet $C$ in which any two elements are
  comparable is called a \emph{chain} and its length is $\vert C \vert
  -1$.  Every finite chain is bounded and its maximum and minimum
  elements are its \emph{endpoints}.  If a finite chain $C$ is not a
  subposet of a strictly larger finite chain with the same endpoints,
  then $C$ is \emph{saturated}.  Saturated chains of length~$1$ are
  called \emph{covering relations}.  If every saturated chain in $P$
  between the same pair of endpoints has the same finite length, then
  $P$ is \emph{graded}.  The \emph{rank} of an element $p$ is the
  length of the longest chain with $p$ as its upper endpoint and its
  \emph{corank} is the length of the longest chain with $p$ as its
  lower endpoint, assuming such chains exists.
\end{defn}

\begin{defn}[Lattices]\label{def:lattice}
  Let $Q$ be a subset of a poset $P$.  A lower bound for $Q$ is any
  $p\in P$ with $p \leq q$ for all $q\in Q$.  When the set of lower
  bounds for $Q$ has a unique maximum element, this element is the
  \emph{greatest lower bound} or \emph{meet} of $Q$.  Upper bounds and
  the \emph{least upper bound} or \emph{join} of $Q$ are defined
  analogously.  The meet and join of $Q$ are denoted $\bigmeet Q$ and
  $\bigjoin Q$ in general and $u \meet v$ and $u \join v$ if $u$ and
  $v$ are the only elements in $Q$.  When every pair of elements has a
  meet and a join, $P$ is a \emph{lattice} and when every subset has a
  meet and a join, it is a \emph{complete lattice}.  An easy argument
  shows that every bounded graded lattice is complete.
\end{defn}

The main posets we need are the linear and affine subspace posets.

\begin{defn}[Linear subspaces]\label{def:lin}
  The linear subspaces of $V$ partially ordered by inclusion form a
  poset we call $\lin(V)$.  It is graded, bounded, self-dual and a
  complete lattice.  The bounding elements are clear, the grading is
  by dimension (in that a $k$-dimensional subspace has rank~$k$ and
  corank~$n-k$), the meet of a collection of subspaces is their
  intersection and their join is the span of their union.  And
  finally, the map sending a linear subspace to its orthogonal
  complement is a bijection that establishes self-duality.
\end{defn}

\begin{defn}[Affine subspaces]\label{def:aff}
  The \emph{affine subspaces} of $E$ partially ordered by inclusion
  form a poset we call $\aff(E)$.  For each $n$, it is a graded poset
  that is bounded above but not below since distinct points are
  distinct minimal elements.  It is neither self-dual nor a lattice.
  There is, however, a well-defined rank-preserving poset map $\aff(E)
  \onto \lin(V)$ sending each affine subspace $B$ to its space of
  directions $\dir(B)$.  Chains in $\lin(V)$ and $\aff(E)$ are
  traditionally called \emph{flags} and a maximal chain, starting at a
  minimal element and ending at the whole space, is a \emph{complete
    flag}.
\end{defn}

\section{Euclidean isometries}

The symmetries of $E$ that preserve distances and angles are
isometries and they form a group that we call $W = \isom(E)$.  The
choice of the letter $W$ reflects the fact that we treat $\isom(E)$
like a continuous version of an affine Weyl group.  We begin by
highlighting two sets associated with any isometry.

\begin{defn}[Invariants]\label{def:inv}
  Let $w\in W$ be an isometry of $E$.  If $\lambda$ is the vector from
  $x$ to $w(x)$ then we say $x$ is \emph{moved by $\lambda$ under
    $w$}.  The collection $\mov(w) = \{ \lambda \mid x+\lambda = w(x),
  x \in E\} \subset V$ of all such vectors is the \emph{move-set} of
  $w$.  As we show below, a move-set is an affine subspace and thus
  has standard form $U +\mu$ where $U$ is a linear subspace and $\mu$
  is a vector in $U^\perp$.  The points in $E$ that are moved by $\mu$
  under $w$ are those that are moved the shortest distance.  The
  collection $\ms(w)$ of all such points is called the \emph{min-set}
  of $w$.  The sets $\mov(w) \subset V$ and $\ms(w) \subset E$ are the
  \emph{basic invariants} of $w$.  The basic invariants of a glide
  reflection of the plane are illustrated in Figure~\ref{fig:glide}.
\end{defn}

\begin{figure}
\begin{tabular}{ccc}
\begin{tabular}{c}
\includegraphics[scale=.85]{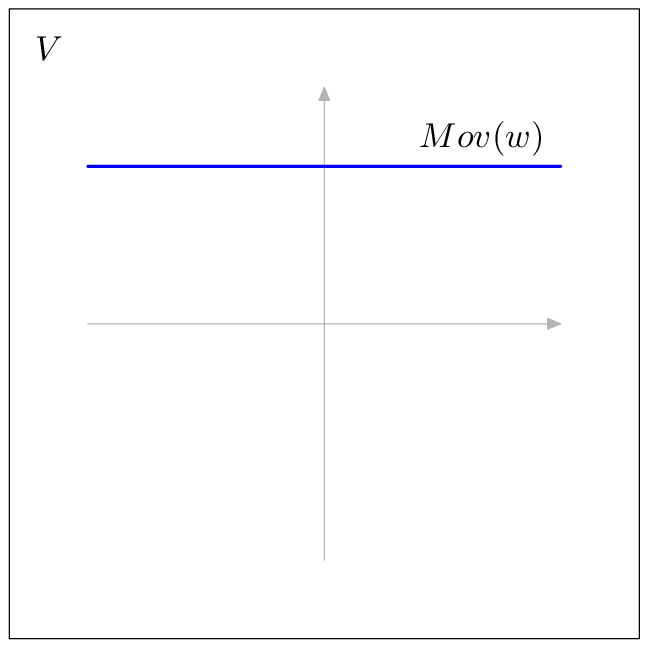}
\end{tabular}
& \hspace*{0em} &
\begin{tabular}{c}
\includegraphics[scale=.85]{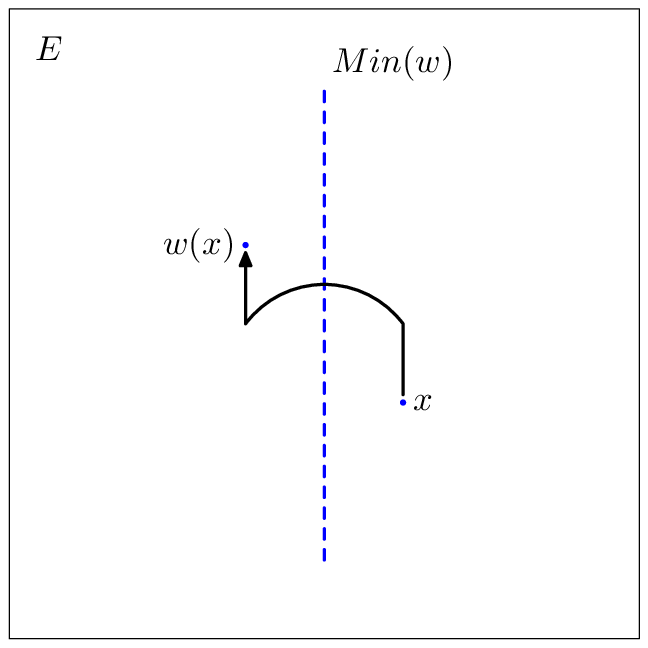}
\end{tabular}
\end{tabular}
\caption{The basic invariants of a glide reflection.\label{fig:glide}}
\end{figure}

\begin{prop}[Invariants are affine]\label{prop:inv-aff}
  For each isometry $w\in W$, its move-set $\mov(w)$ is an affine
  subspace of $V$ and for each affine subspace $M \subset \mov(w)$,
  the points moved by some $\lambda \in M$ under $w$ form an affine
  subspace of $E$.  In particular, the min-set $\ms(w)$ is an affine
  subspace of $E$.
\end{prop}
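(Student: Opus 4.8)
The plan is to pick a basepoint $o \in E$ to identify $E$ with $V$, write the isometry in the standard form $w(x) = A x + \beta$ where $A \in O(V)$ is the linear part and $\beta \in V$, and compute the move-set explicitly. First I would observe that the vector moving $x$ is $w(x) - x = (A - \id)x + \beta$, so $\mov(w) = \{(A-\id)x + \beta \mid x \in V\}$, which is visibly the affine subspace $\operatorname{Im}(A - \id) + \beta$. This already proves the first assertion, and moreover identifies the linear subspace $U$ in the standard form $U + \mu$ as $U = \operatorname{Im}(A - \id)$; its orthogonal complement is $\ker(A - \id)^* = \ker(A^{-1} - \id) = \fix(A)$, the fixed space of the linear part, since $A$ is orthogonal. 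One should check the standard-form vector $\mu$ is well defined independent of basepoint, which follows because changing the basepoint conjugates $w$ by a translation, leaving $A$ unchanged and altering $\beta$ only within $\operatorname{Im}(A-\id)$.

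Next, given an affine subspace $M = U' + \mu' \subseteq \mov(w)$ with $U' \subseteq U$, I would show the set $X_M$ of points moved by some vector in $M$ is an affine subspace of $E$. A point $x$ lies in $X_M$ exactly when $(A-\id)x + \beta \in M$, i.e. when $(A-\id)x \in M - \beta$, which is a coset of $U'$ (since $M - \beta \subseteq U$ and translating an affine subspace by a vector in the ambient linear space keeps it affine with the same direction space $U'$). So $X_M = (A-\id)^{-1}(\text{coset of } U')$. The preimage of an affine subspace under the affine (indeed linear, after the basepoint choice) map $x \mapsto (A-\id)x$ is an affine subspace whenever it is nonempty, and it is nonempty precisely because $M \subseteq \mov(w) = \operatorname{Im}(A-\id) + \beta$ guarantees every element of $M - \beta$ is in the image of $A - \id$. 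Its direction space is $(A - \id)^{-1}(U') = U' \oplus \fix(A)$ after intersecting appropriately; in any case it is affine, which is all that is claimed.

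Finally, the min-set is the special case $M = \{\mu\}$, the single point of $\mov(w)$ of minimal length: $\ms(w) = X_{\{\mu\}}$ is the affine subspace $\{x \mid (A - \id)x = \mu - \beta\}$, nonempty by the same image argument. I expect the only real subtlety to be the basepoint-independence bookkeeping — making sure that the sets $\mov(w)$ and $\ms(w)$, defined intrinsically in $V$ and $E$, really do match the coordinate computations regardless of which $o$ was chosen — but this is routine once one notes that conjugating $w$ by $t_\nu$ replaces $\beta$ by $\beta + (A - \id)\nu$, which shifts nothing in $\operatorname{Im}(A-\id) + \beta$. Everything else is linear algebra about images and preimages of cosets under $A - \id$.
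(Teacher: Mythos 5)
Your argument is correct, but it takes a different route from the paper. The paper's proof is coordinate-free: it uses only the remark (stated just before the proposition) that an isometry preserves barycentric coordinates, so that if $x$ and $y$ are moved by $\mu$ and $\nu$ then the point $cx+dy$ (with $c+d=1$) is moved by $c\mu+d\nu$; closure of $\mov(w)$ under affine combinations of pairs, and the analogous closure of the relevant point sets in $E$, is then immediate, with no basepoint ever chosen. You instead fix a basepoint, invoke the standard form $w(x)=Ax+\beta$ with $A$ orthogonal, and compute $\mov(w)=\operatorname{Im}(A-\mathrm{Id})+\beta$ and the point sets as preimages $(A-\mathrm{Id})^{-1}(M-\beta)$ of cosets, with the nonemptiness correctly supplied by $M\subset\mov(w)$. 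This is a complete proof, with two caveats worth noting: it relies on the classification of isometries as orthogonal-linear-plus-translation, which the paper has not established at this point (though it quietly uses the point-stabilized case later, in the proof of the complementary-invariants lemma), and it forces the basepoint-independence bookkeeping you flag, which the synthetic argument avoids entirely. What your computation buys in exchange is extra explicit information the paper only extracts later: the identification $U=\operatorname{Im}(A-\mathrm{Id})$ with $U^\perp=\fix(A)$, and the min-set as an explicit fiber, facts the paper recovers via the standard splitting, rank--nullity, and the orthogonal-invariants lemma. Your aside that the direction space of the preimage is ``$U'\oplus\fix(A)$ after intersecting appropriately'' is imprecise, but as you say it is not needed for the statement being proved.
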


\begin{proof}
  For each $\mu, \nu \in \mov(w)$ choose $x$ and $y$ so that $w(x) = x
  + \mu$ and $w(y) = y + \nu$.  Because $w$ is an isometry, it sends
  the line through $x$ and $y$ to the line through $w(x)$ and $w(y)$.
  In barycentric coordinates, for each $c,d \in \R$ with $c+d=1$, the
  point $cx+dy$ is sent to $c(x+\mu) + d(y+\nu) =
  (cx+dy)+(c\mu+d\nu)$.  In particular, every point on the affine line
  through $\mu$ and $\nu$ in $V$ is the motion of some point in $E$
  showing that $\mov(w)$ is affine.  Similarly, if $x$ and $y$ are
  both moved by $\lambda$ and $\mu$ in an affine subspace $M$ under
  $w$, then for each $c,d \in \R$ with $c+d=1$, $cx+dy$ is sent to
  $c(x+\lambda) + d(y+\mu) = (cx+dy)+(c\lambda+d\mu)$.  Thus every
  point on the line through $x$ and $y$ is moved by some vector in $M$
  under $w$ and the set of all such points is an affine subspace.
\end{proof}

\begin{defn}[Elliptic and hyperbolic]
  Let $w$ be an isometry of $E$ and let $U+\mu$ be the standard form
  of its move-set $\mov(w)$.  There are points fixed by $w$ iff $\mu$
  is trivial iff $\mov(w)$ is a linear subspace.  Under these
  conditions $w$ is \emph{elliptic} and the min-set $\ms(w)$ is just
  the \emph{fix-set} $\fix(w)$ of points fixed by $w$.  Similarly, $w$
  has no fixed points iff $\mu$ is nontrivial iff $\mov(w)$ a
  nonlinear affine subspace of $V$.  Under these conditions $w$ is
  \emph{hyperbolic}. The names come from a tripartite classification
  of isometries \cite{BridsonHaefliger99}. The third type, parabolic,
  does not occur in this context.
\end{defn}

Translations and reflections are the simplest examples of hyperbolic
and elliptic isometries, respectively.  The translations $t_\lambda$,
defined in Definition~\ref{def:pt-vec}, can alternatively be
characterized as the only isometries whose move-set is a single point
or whose min-set is all of $E$.  They are the essential difference
between elliptic and hyperbolic isometries.  There is a special
factorization of a hyperbolic isometry $w$ that we call its
\emph{standard splitting}.

\begin{prop}[Standard splittings]\label{prop:std-split}
  If $w$ is a hyperbolic isometry whose move-set has standard form
  $\mov(w) = U+\mu$, then the unique isometry $u$ that satisfies the
  equation $w = t_\mu u$, is an elliptic isometry with $\mov(u) =
  \dir(\mov(w))$ and $\fix(u) = \ms(w)$.
\end{prop}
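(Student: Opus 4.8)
The plan is to work directly with the candidate $u := t_\mu^{-1} w = t_{-\mu} w$, which is visibly the unique isometry satisfying $w = t_\mu u$, and to compute its move-set and fix-set by chasing a single point through the definitions. The only inputs needed are that $\mov(w)=U+\mu$ with $U = \dir(\mov(w))$ a linear subspace, the elliptic/hyperbolic dichotomy, and the fact from Definition~\ref{def:inv} that $\ms(w)$ is exactly the set of points moved by $\mu$ under $w$.

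First I would establish the pointwise formula for $u$. Given any $x\in E$, write $w(x) = x+\lambda$ with $\lambda\in\mov(w)=U+\mu$, so that $\lambda-\mu\in U$. Applying $t_{-\mu}$ gives $u(x) = w(x)-\mu = x + (\lambda-\mu)$, so $x$ is moved by $\lambda-\mu\in U$ under $u$; hence $\mov(u)\subseteq U$. For the reverse inclusion, take any $\nu\in U$: since $\nu+\mu\in\mov(w)$, there is a point $x$ with $w(x) = x+(\nu+\mu)$, and then $u(x)=x+\nu$, so $\nu\in\mov(u)$. Therefore $\mov(u) = U = \dir(\mov(w))$, which in particular is a linear subspace; by the elliptic/hyperbolic dichotomy this already shows $u$ is elliptic (equivalently, $\zer\in\mov(u)$, so $u$ fixes a point), and for an elliptic isometry the min-set coincides with the fix-set.

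It then remains to identify $\fix(u)$. This is a short equivalence: $x\in\fix(u)$ iff $u(x)=x$ iff $w(x) = t_\mu(u(x)) = t_\mu(x) = x+\mu$ iff $x$ is moved by $\mu$ under $w$, which by Definition~\ref{def:inv} is exactly the condition $x\in\ms(w)$. Hence $\fix(u)=\ms(w)$, completing the proof.

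I do not expect a genuine obstacle here; the argument is a direct computation. The only place requiring care is the bookkeeping of the translation: one must apply $t_{-\mu}$ on the correct side and remember that $\mu$ itself lies in $\mov(w)$ (because $\zer\in U$), so that both inclusions $\mov(u)\subseteq U$ and $U\subseteq\mov(u)$ go through, and then simply quote the elliptic/hyperbolic definition rather than re-prove that a move-set containing $\zer$ forces a fixed point.
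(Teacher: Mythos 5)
Your proposal is correct and takes essentially the same approach as the paper: your two-sided pointwise computation of $\mov(u)$ is just a spelled-out version of the paper's remark that translations translate move-sets, and you identify $\fix(u)$ with $\ms(w)$ by the same equivalence ($u(x)=x$ iff $w(x)=x+\mu$). The only cosmetic difference is that you deduce ellipticity from $\mov(u)=U$ being linear, while the paper deduces it from the existence of points moved by $\mu$ under $w$; these are interchangeable.
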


\begin{proof}
  First note that $\dir(\mov(w)) = U$ and translations translate
  move-sets.  Next, a point $x$ is fixed by $u$ iff $x$ is moved by
  $\mu$ under $w$ and such points do exist.  Thus $u$ is elliptic and
  $\fix(u) = \ms(w)$.
\end{proof}

The standard splitting can be used to show that min-sets and move-sets
have complementary dimensions.

\begin{lem}[Complementary invariants]\label{lem:inv-comp}
  For each isometry $w\in W$ the dimensions of $\mov(w)$ and $\ms(w)$
  add up to the dimension of $E$.
\end{lem}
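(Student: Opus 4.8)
The plan is to reduce everything to the elliptic case and then invoke the rank--nullity theorem, exactly as the preceding remark suggests. First I would dispose of the elliptic case. Suppose $w$ is elliptic, so that $\ms(w) = \fix(w)$ is nonempty and $\mov(w)$ is the linear subspace $U = \dir(\mov(w))$. Choose a basepoint $o \in \fix(w)$ and use it to identify $E$ with $V$; under this identification $w$ becomes an orthogonal linear transformation $A$ of $V$ that fixes $o = \zer$. For a point $x = o + \lambda$, the vector from $x$ to $w(x)$ is then $A(\lambda) - \lambda$, so $\mov(w)$ is precisely the image of $A - \id$, while $\fix(w)$ corresponds to $\ker(A - \id)$. (Here $\mov(w)$ really is the whole image and not merely a coset of it because the fixed point $o$ is moved by $\zer$, consistent with $\mov(w)$ being linear.) The rank--nullity theorem applied to $A - \id$ now gives
\[
  \dim \mov(w) + \dim \ms(w) = \dim \mathrm{im}(A - \id) + \dim \ker(A - \id) = \dim V = n,
\]
which is the assertion in this case.

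Next I would handle a hyperbolic isometry $w$ via its standard splitting (Proposition~\ref{prop:std-split}). Write $\mov(w) = U + \mu$ in standard form and let $w = t_\mu u$ with $u$ elliptic, $\mov(u) = \dir(\mov(w)) = U$, and $\fix(u) = \ms(w)$. Since the dimension of an affine subspace is by definition the dimension of its space of directions, $\dim \mov(w) = \dim U = \dim \mov(u)$; and $\dim \ms(w) = \dim \fix(u) = \dim \ms(u)$ because $u$ is elliptic. Feeding $u$ into the elliptic case just proved yields $\dim \mov(u) + \dim \ms(u) = n$, and substituting the two displayed equalities completes the argument.

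I do not anticipate a real obstacle here; the one step that deserves care is the passage from the elliptic isometry $w$ to its associated linear map, namely verifying that fixing a basepoint in $\fix(w)$ genuinely turns $w$ into an orthogonal linear transformation $A$ and simultaneously identifies $\mov(w)$ with the image of $A - \id$ and $\fix(w)$ with its kernel. Once that dictionary is in place, the lemma is just rank--nullity together with the bookkeeping identity $\dim M = \dim \dir(M)$ for affine subspaces and the properties of the standard splitting recorded above.
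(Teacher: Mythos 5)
Your argument is correct and follows the paper's proof essentially verbatim: fix a basepoint in $\fix(w)$ to make $w$ linear, apply rank--nullity to $x \mapsto w(x)-x$ (your $A-\id$), and reduce the hyperbolic case to the elliptic one via the standard splitting of Proposition~\ref{prop:std-split}.
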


\begin{proof}
  Suppose $w$ is elliptic and $U = \mov(w)$. If we choosing a
  basepoint in $\fix(w)$ to identify $E$ with $V$, then $w$
  corresponds to a linear transformation of $V$ and the map $x\mapsto
  w(x)-x$ is linear as well. Its image is $\mov(w)$, its kernel is
  $\fix(w)$ and the desired equation is just the rank-nullity theorem.
  When $w$ is hyperbolic, we use Propositinon~\ref{prop:std-split} to
  find an elliptic isometry $u$ with $\mov(u) = \dir(\mov(w))$ and
  $\fix(u) = \ms(w))$.  The result holds for $u$ and thus for $w$.
\end{proof}

There is also a stronger version of Lemma~\ref{lem:inv-comp}.

\begin{lem}[Orthogonal invariants]\label{lem:inv-orth}
  For each isometry $w \in W$, there is an orthogonal decomposition
  $V=\dir(\mov(w)) \oplus \dir(\ms(w))$.
\end{lem}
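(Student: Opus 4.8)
The plan is to peel off the translational part and then reduce to a statement about orthogonal linear maps. First I would dispose of the hyperbolic case by passing to a standard splitting: if $w$ is hyperbolic with $\mov(w)=U+\mu$, Proposition~\ref{prop:std-split} produces an elliptic $u$ with $w=t_\mu u$, $\mov(u)=\dir(\mov(w))$ and $\fix(u)=\ms(w)$. Since a linear subspace is its own space of directions, $\dir(\mov(u))=\dir(\mov(w))$, and since $\ms(u)=\fix(u)=\ms(w)$ we also get $\dir(\ms(u))=\dir(\ms(w))$; so the desired decomposition for $u$ is verbatim the one for $w$, and we may assume $w$ is elliptic.

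For elliptic $w$ I would pick a basepoint in $\fix(w)$ to identify $E$ with $V$, turning $w$ into an orthogonal linear transformation. Then $\dir(\mov(w))$ is just $\mov(w)$, the image of the linear map $\phi\colon\lambda\mapsto w(\lambda)-\lambda$, while $\dir(\ms(w))$ is just $\ms(w)=\fix(w)=\ker\phi$. So everything comes down to showing that $\operatorname{im}\phi$ and $\ker\phi$ are orthogonal complements in $V$.

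The one real computation is the orthogonality $\ker\phi\perp\operatorname{im}\phi$. For $\lambda\in\ker\phi$ we have $w(\lambda)=\lambda$, hence $w^{-1}(\lambda)=\lambda$, so for every $\nu\in V$,
\[
  \langle\lambda,\phi(\nu)\rangle=\langle\lambda,w(\nu)\rangle-\langle\lambda,\nu\rangle=\langle w^{-1}(\lambda),\nu\rangle-\langle\lambda,\nu\rangle=0,
\]
using that $w$ preserves the inner product. This gives $\ker\phi\subseteq(\operatorname{im}\phi)^\perp$, and then Lemma~\ref{lem:inv-comp} (which says $\dim\mov(w)+\dim\ms(w)=n$) forces equality, yielding $V=\mov(w)\oplus\fix(w)$ as an orthogonal direct sum. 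One can also obtain equality directly: if $\lambda\perp\operatorname{im}\phi$ then $\langle w^{-1}(\lambda)-\lambda,\nu\rangle=\langle\lambda,w(\nu)-\nu\rangle=0$ for all $\nu$, forcing $w(\lambda)=\lambda$, so $(\operatorname{im}\phi)^\perp\subseteq\ker\phi$ as well.

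I expect no genuine obstacle here; the only thing needing care is the bookkeeping in the reduction step — verifying through Proposition~\ref{prop:std-split} that the standard splitting changes neither $\dir(\mov(w))$ nor $\dir(\ms(w))$, and keeping straight that under a basepoint in $\fix(w)$ a linear subspace coincides with its own space of directions, so that the linear-algebra statement genuinely captures Lemma~\ref{lem:inv-orth}.
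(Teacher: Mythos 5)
Your proof is correct, and its overall skeleton matches the paper's: dispose of the hyperbolic case by the standard splitting of Proposition~\ref{prop:std-split} (noting $\mov(u)=\dir(\mov(w))$ and $\fix(u)=\ms(w)$), then settle the elliptic case by proving one orthogonality inclusion and letting the dimension count of Lemma~\ref{lem:inv-comp} force equality. Where you genuinely differ is in how the elliptic inclusion is proved. The paper stays in the affine picture: for $x\in E$ it takes the nearest point $x_0\in\fix(w)$, observes that the vector from $x$ to $x_0$ is orthogonal to $\dir(\fix(w))$ and that $w$ preserving distances keeps $x_0$ as the nearest point to $w(x)$, so the motion vector is a sum of two vectors in $\dir(\fix(w))^\perp$. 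You instead pass to the linear model via a basepoint in $\fix(w)$, set $\phi(\lambda)=w(\lambda)-\lambda$, and prove $\ker\phi\perp\operatorname{im}\phi$ from the identity $\langle\lambda,w(\nu)\rangle=\langle w^{-1}(\lambda),\nu\rangle$ for the orthogonal map $w$. Your computation is correct, and it buys a little extra: the reverse inclusion $(\operatorname{im}\phi)^\perp\subseteq\ker\phi$ also falls out directly, so the appeal to Lemma~\ref{lem:inv-comp} becomes optional rather than essential. The paper's version buys a synthetic, coordinate-free argument in keeping with its points-versus-vectors discipline, at the cost of invoking the nearest-point projection. Both routes ultimately rest on the same fact, already used in the paper's proof of Lemma~\ref{lem:inv-comp}, that an elliptic isometry becomes a linear (hence inner-product-preserving) map once a fixed basepoint is chosen, so your reduction introduces no hypotheses beyond what the paper itself assumes.
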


\begin{proof}
  By Lemma~\ref{lem:inv-comp} the dimensions of these subspaces add up
  to $n$ so it is sufficient to prove that $\dir(\mov(w))$ is subset
  of $\dir(\ms(w))^\perp$.  When $w$ is elliptic, $\dir(\mov(w)) =
  \mov(w)$ and $\dir(\ms(w)) = \dir(\fix(w))$ are linear subspaces of
  $V$.  To see that they are orthogonal, let $B = \fix(w)$ and let $U
  = \dir(B)^\perp$.  For each $x \in E$, there is a unique $x_0 \in B$
  closest to $x$ and the vector $\mu$ from $x$ to $x_0$ is orthogonal
  to $\dir(B)$, i.e. $\mu$ is in $U$.  Also, since $w$ preserves
  distances and fixes $x_0$, the point in $B$ closest to $w(x)$ is
  again $x_0$.  Thus the vector from $x$ to $w(x)$ is a combination of
  two vectors in $U$ and $\mov(w) \subset U$.  By the dimension count
  $\mov(w) = U$.  When $w$ is hyperbolic with $\mov(w) = U+\mu$, there
  is an elliptic isometry $u$ with $\mov(u) = U = \dir(\mov(w))$ and
  $\ms(u) = \ms(w)$ (Proposition~\ref{prop:std-split}).  The
  decomposition derived from $u$ completes the proof.
\end{proof}

The standard splitting also can be used to prove a handy
characterization of the min-set of an isometry.

\begin{prop}[Identifying min-sets]\label{prop:identify-min}
  If $w$ is a hyperbolic isometry whose move-set has standard form
  $\mov(w) =U + \mu$, then the min-set of $w$ is the unique affine
  subspace $B$ of $E$ stabilized by $w$ whose dimension is the
  codimension of $U$ and where all points in $B$ experience the same
  motion under $w$.
\end{prop}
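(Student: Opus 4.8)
The plan is to verify first that $\ms(w)$ actually has the three listed properties, and then to show that these properties determine it uniquely. For the first part I would invoke Proposition~\ref{prop:inv-aff} to know $\ms(w)$ is an affine subspace of $E$, and Lemma~\ref{lem:inv-comp} to compute $\dim\ms(w)=n-\dim\mov(w)=n-\dim U=\codim U$ (using that $\mov(w)=U+\mu$ is a translate of $U$). That every point of $\ms(w)$ undergoes the same motion $\mu$ is immediate from the definition of the min-set. For stability I would use the standard splitting $w=t_\mu u$ of Proposition~\ref{prop:std-split}: for $x\in\ms(w)=\fix(u)$ one has $w(x)=t_\mu(u(x))=x+\mu$, and since $\mu\in U^\perp=\dir(\ms(w))$ by Lemma~\ref{lem:inv-orth}, the isometry $w$ acts on $\ms(w)$ as the translation $t_\mu$ and hence stabilizes it.

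For uniqueness, let $B$ be any affine subspace with $\dim B=\codim U$ that is stabilized by $w$ and on which $w$ moves every point by a single vector $\nu$; the goal is $B=\ms(w)$. First, $\nu\in\dir(B)$, since $w$ sends $x\in B$ to $x+\nu\in B$. The crux is to show $\dir(B)=U^\perp$. I would fix a basepoint and write $w$ as an affine map $x\mapsto A(x)+b$ with $A$ orthogonal and linear; then for $x,y\in B$ the vector $A(x-y)$, being the vector from $w(y)$ to $w(x)$, equals $(x+\nu)-(y+\nu)=x-y$, so $A$ fixes all of $\dir(B)$. Since $A$ is also the linear part of the elliptic factor $u$ in $w=t_\mu u$, a second basepoint chosen inside $\fix(u)=\ms(w)$ identifies $u$ with $A$ and identifies $\fix(A)$ with $\dir(\fix(u))=\dir(\ms(w))$, which is $U^\perp$ by Lemma~\ref{lem:inv-orth}. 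Hence $\dir(B)\subseteq U^\perp$, and since $\dim B=\codim U=\dim U^\perp$ we conclude $\dir(B)=U^\perp$.

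The conclusion then follows quickly: $\nu\in\dir(B)=U^\perp$ together with $\nu\in\mov(w)=U+\mu$ forces $\nu=\mu$ — writing $\nu=\xi+\mu$ with $\xi\in U$, we get $\xi=\nu-\mu\in U\cap U^\perp=\{\zer\}$ — so every point of $B$ is moved by $\mu$, whence $B\subseteq\ms(w)$, and finally $B=\ms(w)$ because both are affine subspaces of dimension $\codim U$.

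I expect the middle step — showing $\dir(B)=U^\perp$ — to be the main obstacle: it is the only place where the full strength of the stabilization hypothesis is really used (as opposed to merely knowing $w$ translates $B$), and it must route the linear part of the hyperbolic isometry $w$ through the orthogonal decomposition of Lemma~\ref{lem:inv-orth} by way of the standard splitting. Everything else is dimension bookkeeping and unwinding of the definitions.
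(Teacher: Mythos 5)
Your proof is correct, but it takes a genuinely different route from the paper's. The paper argues synthetically: it fixes a transversal affine subspace $C$ with $\dir(C)=U$, observes that the affine subspaces with direction space $U^\perp$ foliate $E$ and are parameterized by the points of $C$, uses the standard splitting $w=t_\mu u$ and a rectangles argument to show that two points undergo the same motion under $w$ iff they lie in the same $U^\perp$ leaf, concludes from the dimension hypothesis that $B$ is one of these leaves, and finally identifies the unique leaf stabilized by $w$ (equivalently by $u$, since $t_\mu$ preserves every leaf) as the one through the unique point of $C$ fixed by $u$, namely $\ms(w)$. You instead route everything through the linear part $A$ of $w$: the equal-motion hypothesis gives that $A$ fixes $\dir(B)$ pointwise, a basepoint in $\fix(u)=\ms(w)$ identifies $\fix(A)$ with $\dir(\ms(w))=U^\perp$ via Lemma~\ref{lem:inv-orth}, so $\dir(B)\subseteq U^\perp$ with equality by the dimension count; then stabilization gives $\nu\in\dir(B)=U^\perp$, and $U^\perp\cap(U+\mu)=\{\mu\}$ forces $\nu=\mu$, whence $B\subseteq\ms(w)$ and equality follows again by dimension. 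Your version is shorter and more algebraic (at the mild cost of invoking the linear part of an isometry, which the paper uses only implicitly, e.g.\ in Lemma~\ref{lem:inv-comp}), while the paper's version buys the extra structural fact that the $U^\perp$ leaves are precisely the equal-motion classes. One small correction to your closing remark: the step $\dir(B)=U^\perp$ uses only the equal-motion hypothesis plus the dimension count, not stabilization; the stabilization hypothesis enters exactly once, in showing $\nu\in\dir(B)$ --- which is precisely what rules out, say, the lines parallel to the axis of a glide reflection, where the other two hypotheses hold but the conclusion fails.
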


\begin{proof}
  The min-set of $w$ has these properties since its points under the
  same motion by definition and the other aspects follow from
  Lemma~\ref{lem:inv-orth}.  Thus we only need to show that $\ms(w)$
  is the only such subspace.  Let $B$ be an affine subspace satisfying
  these conditions and choose an affine subspace $C$ in $E$ with
  $\dir(C) = U$.  The points in $C$ parameterize the affine subspaces
  $B'$ with $\dir(B') = U^\perp$ in the following sense: every such
  $B'$ intersects $C$ in a single point $x$ and each point $x \in C$
  determines a unique such subspace $B'$.  Call these the $U^\perp$
  subspaces of $E$.  Next, let $w= t_\mu u$ be the standard splitting
  of $w$ and note that because $\mov(u) = U$, $u$ stabilizes $C$.
  Moreover, because isometries send rectangles to rectangles, points
  in the same $U^\perp$ subspace undergo the same motion under the
  action of $u$ and thus the same motion under $w$.  Thus $C$ contains
  a representative of every possible motion under $u$ and because the
  dimension of $U$ and $C$ agree, it contains a unique representative
  of each motion.  In particular, $C$ contains a unique point that is
  fixed under $u$.  This also means that the converse also holds:
  points undergoing the same motion under $u$ (or equivalently the
  same motion under $w$) belong to the same $U^\perp$ subspace.  As a
  consequence $B$ is a subspace of a $U^\perp$ subspace and because
  their dimensions agree, $B$ is a $U^\perp$ subspace.  Finally, $\mu
  \in U^\perp$ means that $t_\mu$ stabilizes each $U^\perp$ subspace,
  and thus $w$ stabilizes such a subspace iff $u$ stabilizes it.  But
  the only $U^\perp$ subspace stabilized by $u$ is the one
  corresponding to the unique point it fixes in $C$.  These are
  exactly the points moved by $\mu$ under $w$ proving that $B$ is
  $\ms(w)$.
\end{proof}

\begin{defn}[Reflections in $V$]
  A \emph{hyperplane} $H$ in $V$ is a linear subspace of
  codimension~$1$ and for each hyperplane there is a unique nontrivial
  isometry $r$ that fixes $H$ pointwise called a \emph{reflection}.
  Let $L$ be the $1$-dimensional orthogonal complement of $H$ and note
  that it contains exactly two unit vectors $\pm \alpha$ called the
  \emph{roots} of $r$.  Since $L$, $H$, and $r$ can be recovered from
  $\alpha$, we write $L = L_\alpha$, $H = H_\alpha$ and $r =
  r_\alpha$.
\end{defn}

\begin{defn}[Reflections in $E$]\label{def:reflections}
  A \emph{hyperplane} $H$ in $E$ is an affine subspace of
  codimension~$1$ and, as above, for each hyperplane there is a unique
  nontrivial isometry $r$ that fixes $H$ pointwise called a
  \emph{reflection}.  The set $R$ of all reflections generates $W =
  \isom(E)$.  The space of directions is a hyperplane $\dir(H) =
  H_\alpha$ in $V$ and $\pm \alpha$ are called the \emph{roots} of
  $r$.  Note that $\mov(r)$ is the line $L_\alpha \subset V$ spanned
  by $\alpha$ and $\ms(r) = \fix(r) = H \subset E$.
\end{defn}

\section{Intervals in marked groups}\label{sec:intervals}

A \emph{marked group} is a group $G$ with a fixed generating set $S$
which, for convenience, we assume is symmetric and injects into $G$.
Thus, $s\in S$ iff $s^{-1} \in S$ and we can view $S$ as a subset of
$G$.  Fixing a generating set defines a natural metric on the group.

\begin{defn}[Metrics on groups]
  Let $G$ be a group generated by a set $S$.  The (right) \emph{Cayley
    graph of $G$ with respect to $S$} is a labeled directed graph
  denoted $\cay(G,S)$ with vertices indexed by $G$ and edges indexed
  by $G \times S$.  The edge $e_{(g,s)}$ has \emph{label} $s$, it
  starts at $v_g$ and ends at $v_{g'}$ where $g' = g\cdot s$. There is
  a natural faithful, vertex-transitive, label and orientation
  preserving left action of $G$ on its Cayley graph.  Moreover these
  are the only label and orientation preserving graph automorphisms of
  $\cay(G,S)$, making the identity automorphism the unique
  automorphism of this type that fixes a vertex.  The \emph{distance}
  $d(g,h)$ is the combinatorial length of the shortest path in the
  Cayley graph from $v_g$ to $v_h$.  Note that the symmetry assumption
  allows us to restrict attention to directed paths.  This defines a
  metric on $G$ and distance from the identity defines a \emph{length
    function} $\ls\colon G\to \N$.  The value $\ls(g) = d(1,g)$ is
  called the \emph{$S$-length of $g$} and it is also the length of the
  shortest factorization of $g$ in terms of elements of $S$.  Because
  Cayley graphs are homogeneous, metric properties of the distance
  function translate into properties of $\ls$.  Symmetry and the
  triangle inequality, for example, imply that $\ls(g) = \ls(g^{-1})$,
  and $\ls(gh)\leq \ls(g) + \ls(h)$.
\end{defn}

Next recall the notion of an interval in a metric space.

\begin{defn}[Intervals in metric spaces]
  Let $x$, $y$ and $z$ be points in a metric space $(X,d)$.  Borrowing
  from euclidean plane geometry we say that $z$ is \emph{between} $x$
  and $y$ whenever the triangle inequality degenerates into an
  equality.  Concretely $z$ is between $x$ and $y$ when $d(x,z) +
  d(z,y) = d(x,y)$.  The \emph{interval} $[x,y]$ is the collection of
  points between $x$ and $y$ and this includes both $x$ and $y$.
  Intervals can also be endowed with a partial ordering by declaring
  that $u \leq v$ whenever $d(x,u) + d(u,v) + d(v,y) = d(x,y)$.
\end{defn}

\begin{figure}
  \begin{tabular}{c}\includegraphics[scale=.4]{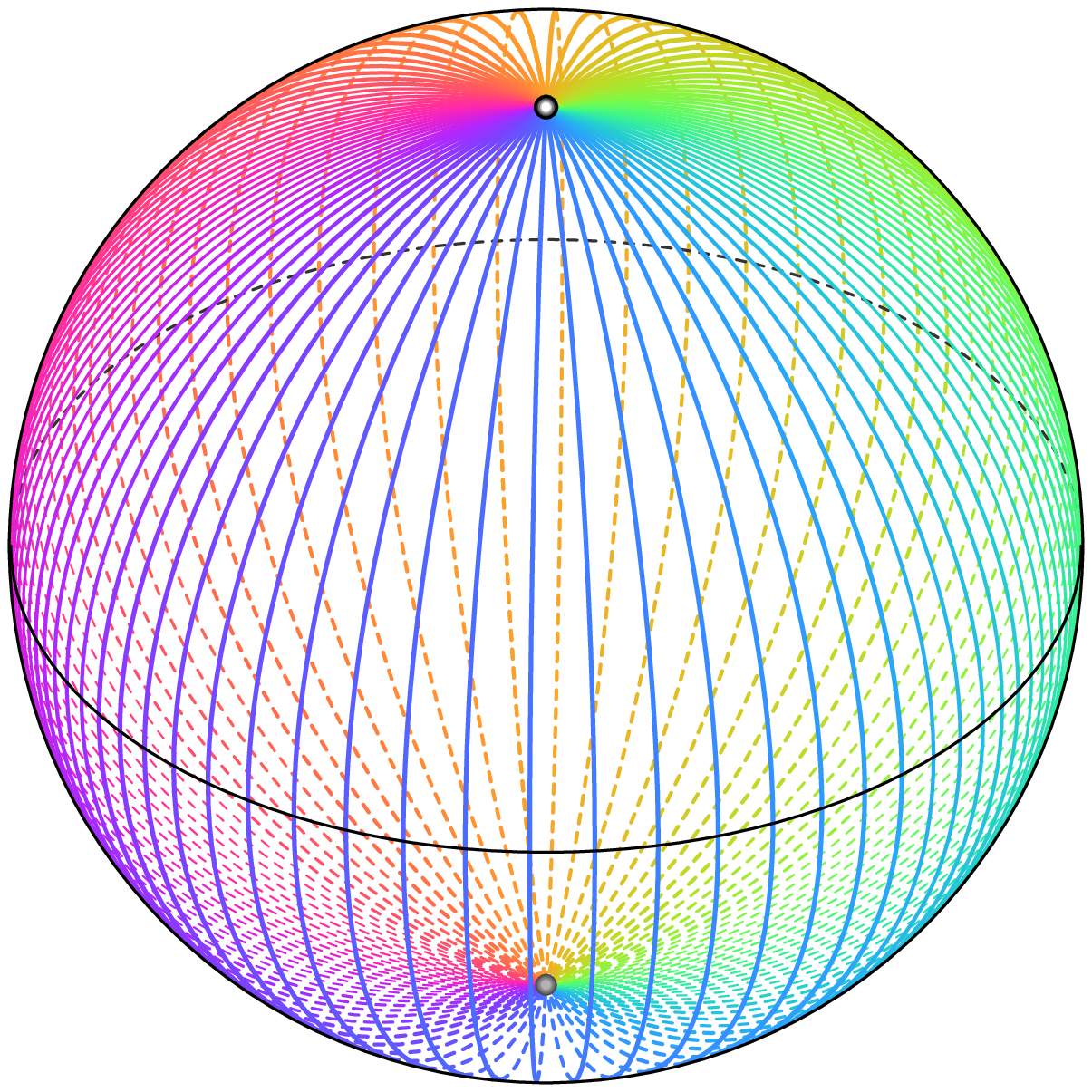}\end{tabular}
  \caption{The interval between two antipodal points of
    $\sph^2$.\label{fig:intervals}}
\end{figure}

As an illustration, consider points $x$ and $y$ on the $2$-sphere with
its usual metric.  If they are not antipodal, then the only points
between them are those on the unique shortest geodesic connecting $x$
to $y$ with the usual ordering along paths.  But if they are
antipodal, say $x$ is the south pole and $y$ is the north pole, then
the interval $[x,y]$ is all of $\sph^2$ and $u < v$ iff $u$ and $v$
lie on a common line of longitude connecting $x$ to $y$ with the
latitude of $u$ below the latitude of $v$ as shown in
Figure~\ref{fig:intervals}.  Because marked groups are metric spaces,
they have intervals.

\begin{defn}[Intervals in groups]
  Let $g$ and $h$ be distinct elements in a marked group $G$.  The
  \emph{interval} $[g,h]$ is the poset of group elements between $g$
  and $h$ with $g' \in G$ is in $[g,h]$ when $d(g,g') + d(g',h) =
  d(g,h)$ and $g' \leq g''$ when $d(g,g') + d(g',g'') + d(g'',h) =
  d(g,h)$.  In the Cayley graph $g'\in [g,h]$ means that $v_{g'}$ lies
  on some minimal length path from $v_g$ to $v_h$ and $g' < g''$ means
  that $v_{g'}$ and $v_{g''}$ both occur on a common minimal length
  path $v_g$ to $v_h$ with $v_{g'}$ occurring before $v_{g''}$.
\end{defn}

\begin{prop}[Posets in Cayley graphs]
  If $g$ and $h$ are distinct elements in a group $G$ generated by a
  set $S$ then the interval $[g,h]$ is a bounded graded poset whose
  Hasse diagram is embedded as a subgraph of the Cayley graph
  $\cay(G,S)$.
\end{prop}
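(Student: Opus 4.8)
The plan is to push everything through the triangle inequality, using one reformulation repeatedly: for $g',g''\in[g,h]$ the relation $g'\le g''$ holds precisely when $d(g',g'')+d(g'',h)=d(g',h)$ (equivalently $d(g,g')+d(g',g'')=d(g,g'')$), i.e.\ when $g''$ lies between $g'$ and $h$ in the metric of $\cay(G,S)$. This is just a rearrangement: starting from $d(g,g')+d(g',g'')+d(g'',h)=d(g,h)$ together with the membership equalities $d(g,g')+d(g',h)=d(g,h)=d(g,g'')+d(g'',h)$ one passes freely among these identities. With this in hand the order axioms are routine. Reflexivity and antisymmetry reduce to $d(g',g'')=0\iff g'=g''$; transitivity follows because $g''$ between $g'$ and $h$ together with $g'''$ between $g''$ and $h$ gives $d(g',h)=d(g',g'')+d(g'',g''')+d(g''',h)\ge d(g',g''')+d(g''',h)\ge d(g',h)$, forcing $g'''$ to lie between $g'$ and $h$. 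Boundedness is immediate: $\zer=g$ and $\one=h$ since $d(g,g)=d(h,h)=0$ and every $g'\in[g,h]$ lies between them.

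The crux is to identify the covering relations. I claim $g'$ is covered by $g''$ in $[g,h]$ exactly when $g'<g''$ and $d(g',g'')=1$, that is, when $g''=g'\cdot s$ for some $s\in S$ and $g'<g''$. For the easy direction, any $z$ with $g'<z<g''$ satisfies $d(g',z)+d(z,g'')=d(g',g'')$ by the reformulation, so if $d(g',g'')=1$ then the two positive integers $d(g',z)$ and $d(z,g'')$ sum to $1$, which is impossible. For the other direction, suppose $d(g',g'')=k\ge 2$ and lift a geodesic $v_{g'}=v_{c_0},v_{c_1},\dots,v_{c_k}=v_{g''}$ in $\cay(G,S)$. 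Writing $a=d(g,g')$ and $b=d(g'',h)$, the relation $g'<g''$ gives $a+k+b=d(g,h)$; combining $d(g,c_i)\le a+i$ and $d(c_i,h)\le(k-i)+b$ with $d(g,c_i)+d(c_i,h)\ge d(g,h)$ forces all of these inequalities to be equalities, so every $c_i$ lies in $[g,h]$, and then $d(g,c_i)+d(c_i,c_{i+1})+d(c_{i+1},h)=(a+i)+1+((k-i-1)+b)=d(g,h)$ shows $c_i<c_{i+1}$. Hence $g'<c_1<g''$, contradicting the covering. This proves the claim; since each covering relation $g'\lessdot g''$ is carried by the edge $e_{(g',s)}$ with $g''=g's$, and distinct covering relations use distinct edges, the Hasse diagram of $[g,h]$ sits inside $\cay(G,S)$ as a subgraph.

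Gradedness then follows from the same bookkeeping. Given $p\le q$ in $[g,h]$ and a chain $p=c_0<c_1<\dots<c_m=q$, for each $i$ both $c_{i-1}$ and $c_i$ lie in $[p,h]$ and $c_i$ is between $c_{i-1}$ and $h$, so $c_{i-1}\le c_i$ holds already in the interval $[p,h]$, whence $d(p,c_i)=d(p,c_{i-1})+d(c_{i-1},c_i)$; telescoping gives $\sum_i d(c_{i-1},c_i)=d(p,q)$. Since each summand is at least $1$, the chain has length at most $d(p,q)$, with equality exactly when every step has $d=1$, i.e.\ exactly when the chain is saturated. So every saturated chain from $p$ to $q$ has length $d(p,q)$, and $[g,h]$ is graded, with the rank of $g'$ equal to $d(g,g')$. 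The step that genuinely needs care — and where I expect the only real work — is the geodesic-lifting argument of the middle paragraph: one has to verify that a Cayley-graph geodesic between two elements of the interval stays inside the interval and has consecutive vertices comparable, and this is the hinge on which the covering relations, the embedding, and the grading all turn.
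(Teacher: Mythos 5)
Your proof is correct and follows essentially the same route as the paper's (much terser) argument: bound the interval by $g$ and $h$, grade by distance from $g$, and identify covering relations with Cayley-graph edges lying on geodesics from $v_g$ to $v_h$. What you add is the full verification the paper leaves implicit — in particular the geodesic-lifting step showing a geodesic between comparable interval elements stays in the interval, which is exactly the right detail to pin down the covering relations, the embedding, and the grading.
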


\begin{proof}
  The interval $[g,h]$ is bounded below by $g$, bounded above by $h$
  and graded by the distance from $g$.  To see the Hasse diagram of
  $[g,h]$ inside the Cayley graph of $G$ note that its vertices
  correspond to the elements between $g$ and $h$ and its coverings
  relations correspond to those directed edges in the Cayley graph
  that occur in some shortest directed path from $v_g$ to $v_h$.
\end{proof}

Since the structure of a graded poset can be recovered from its Hasse
diagram, we let $[g,h]$ denote the edge-labeled directed graph that is
visible as a subgraph of the Cayley graph $\cay(G,S)$.

\begin{rem}[Isomorphic intervals]\label{rem:iso-int}
  The left action of a group on its right Cayley graph preserves
  labels and distances.  Thus the interval $[g,h]$ is isomorphic (as a
  edge-labeled directed graph) to the interval $[1,g^{-1}h]$.  In
  other words, every interval in the Cayley graph of $G$ is isomorphic
  to one that starts at the identity.
\end{rem}

We call $g^{-1}h$ the \emph{type} of the interval $[g,h]$ and note
that intervals are isomorphic iff they have the same type.  The
distance ordering on $G$ creates a single poset that contains every
type of interval.

\begin{defn}[Distance ordering]
  The \emph{distance ordering} on a marked group $G$ is defined by
  setting $g' \leq g$ iff $g' \in [1,g]$.  By
  Remark~\ref{rem:iso-int}, this gives $G$ a poset structure that
  contains an interval of every type that occurs in the metric space
  on $G$.
\end{defn}

\section{Reflection length}

In the language of the previous section, our goal is to establish the
poset structure of intervals in the group $W = \isom(E)$ generated by
the set $R$ of reflections.  The key to analyzing these intervals is
to have a good understanding of the length function.  In this section
we recall how the reflection length of an isometry is determined from
its basic invariants (Theorem~\ref{thm:lr}), a result known as
Scherk's theorem \cite{SnapperTroyer89}.  The lower bounds are
straight-forward.

\begin{prop}[Lower bounds]\label{prop:lower}
  If $w = r_1 r_2 \cdots r_k$ is a product of reflections, then the
  dimension of $\mov(w)$ is at most $k$ and it is equal to $k$ iff the
  roots of these reflections are linearly independent.  As a
  consequence $\lr(w) \geq \dim(\mov(w))$.  In addition, because
  linear independence of the roots implies $w$ is elliptic, the
  stronger lower bound $\lr(w) > \dim(\mov(w))$ holds when the
  isometry $w$ is hyperbolic.
\end{prop}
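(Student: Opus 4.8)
The plan is to pin down $\dim(\mov(w))$ exactly and then read off both lower bounds for $\lr(w)$ as formal consequences. Concretely, I would (i) prove $\dim(\mov(w))\le k$ for an arbitrary product $w=r_1\cdots r_k$, with strict inequality whenever the roots $\alpha_1,\dots,\alpha_k$ are linearly dependent; (ii) prove the converse, that linear independence of the roots forces $\dim(\mov(w))=k$ and that such a $w$ is elliptic; and (iii) specialize to a minimal factorization.

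For step (i) the key observation is that move-sets add under composition: for isometries $a,b$ and any point $x$, the vector from $x$ to $(ab)(x)$ is the sum of the vector from $x$ to $b(x)$ and the vector from $b(x)$ to $a(b(x))$, so $\mov(ab)\subseteq\mov(a)+\mov(b)$ as Minkowski sets. Iterating, $\mov(r_1\cdots r_k)\subseteq\mov(r_1)+\cdots+\mov(r_k)$, and since each $\mov(r_i)=L_{\alpha_i}$ is the line through the origin spanned by the root $\alpha_i$, this sum is the linear subspace $\spn(\alpha_1,\dots,\alpha_k)$. Hence $\dim(\mov(w))\le\dim\spn(\alpha_1,\dots,\alpha_k)\le k$, and if the $\alpha_i$ are dependent then $\dim\spn(\alpha_1,\dots,\alpha_k)<k$, so $\dim(\mov(w))<k$. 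This also yields the ``only if'' half of the characterization for free: $\dim(\mov(w))=k$ forces $\dim\spn(\alpha_1,\dots,\alpha_k)=k$.

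Step (ii) is the substantive case. Assume $\alpha_1,\dots,\alpha_k$ are linearly independent. After choosing a basepoint, the $k$ hyperplanes $H_1,\dots,H_k$ fixed by $r_1,\dots,r_k$ are the solution sets of $k$ linear equations with linearly independent gradients, so they meet in a nonempty affine subspace of dimension $n-k$; any point $p$ in this intersection is fixed by every $r_i$, hence by $w$, so $w$ is elliptic. Identifying $E$ with $V$ via the basepoint $p$ makes each $r_i$ a linear reflection $r_{\alpha_i}$ and $w$ a linear isometry. I then claim, by induction on $k$, that $\fix(w)=\spn(\alpha_1,\dots,\alpha_k)^{\perp}$: writing $w=r_{\alpha_1}w'$ with $w'=r_{\alpha_2}\cdots r_{\alpha_k}$ and $S'=\spn(\alpha_2,\dots,\alpha_k)$, the inductive hypothesis gives $\fix(w')=S'^{\perp}$, whence by step (i) and Lemma~\ref{lem:inv-comp} we get $\mov(w')=S'$; now if $w(v)=v$ then $w'(v)-v=r_{\alpha_1}(v)-v$ lies in both $\spn(\alpha_1)$ and $\mov(w')=S'$, which meet only at the origin by independence, forcing $v\perp\alpha_1$ and $v\in\fix(w')=S'^{\perp}$, i.e.\ $v\in\spn(\alpha_1,\dots,\alpha_k)^{\perp}$; the reverse inclusion is immediate since a vector orthogonal to every $\alpha_i$ is fixed by every $r_{\alpha_i}$. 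The base case $k=1$ is $\fix(r_{\alpha_1})=\alpha_1^{\perp}$. Thus $\dim\fix(w)=n-k$, so $\dim(\mov(w))=k$ by Lemma~\ref{lem:inv-comp}, finishing the characterization.

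For step (iii), applying step (i) to a shortest factorization $w=r_1\cdots r_{\lr(w)}$ gives $\dim(\mov(w))\le\lr(w)$. If moreover $w$ is hyperbolic, then equality $\dim(\mov(w))=\lr(w)$ would, by the characterization, make the roots of that minimal factorization linearly independent, and the common-fixed-point argument of step (ii) would then exhibit a point fixed by $w$, contradicting hyperbolicity; hence $\dim(\mov(w))<\lr(w)$. I expect the main obstacle to be the induction in step (ii) — in particular verifying that ``$\fix(w')=S'^{\perp}$'' propagates to ``$\fix(w)=S^{\perp}$'', which hinges on the fact that $\spn(\alpha_1)$ and $\mov(w')$ genuinely intersect only at the origin; everything else is bookkeeping with the additivity of move-sets and the complementary-dimension lemma.
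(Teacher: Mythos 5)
Your proposal is correct, and its skeleton coincides with the paper's proof: the containment $\mov(w)\subseteq\spn(\alpha_1,\dots,\alpha_k)$ (the paper phrases your Minkowski-sum observation as ``the cumulative motion of any point is a linear combination of the $\alpha_i$'s''), and the common fixed point of hyperplanes with independent normals to rule out equality for hyperbolic $w$. Where you genuinely diverge is the ``if'' direction of the characterization, i.e.\ that independence of the roots forces $\dim(\mov(w))=k$: the paper's proof of this proposition does not argue this at all (it treats the iff as a consequence of the containment, which only yields the ``only if'' direction), and the full converse only appears later, in Lemma~\ref{lem:min-ell}, where it is derived from Proposition~\ref{prop:move-refl} and the parity Lemma~\ref{lem:move-parity} --- machinery that rests on Theorem~\ref{thm:lr} and so is not available at this point in the logical order. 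Your inductive computation $\fix(r_{\alpha_1}\cdots r_{\alpha_k})=\spn(\alpha_1,\dots,\alpha_k)^{\perp}$, using only the trivial intersection $\spn(\alpha_1)\cap\mov(w')=\{0\}$ and the rank--nullity Lemma~\ref{lem:inv-comp} (which does precede this proposition), is sound and makes the statement self-contained where the paper defers it; the paper's later route via move-set growth and parity is shorter once that machinery exists, but your argument buys a complete, forward-reference-free proof of the iff here, at the cost of the bookkeeping in the induction. Note also that the strict inequality for hyperbolic $w$ in your step (iii) needs only the ``only if'' direction plus the fixed-point argument, exactly as in the paper, so nothing in the main chain of the paper depends on your extra work --- it simply closes a gap the paper leaves implicit.
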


\begin{proof}
  Let $\pm \alpha_i$ be the roots of the reflection $r_i$.  Because
  $r_i$ only moves points in the $\alpha_i$ direction, the cumulative
  motion of any point $x$ under $w$ is a linear combination of the
  $\alpha_i$'s.  Thus $\mov(w)$ is contained in their span, proving
  the first assertion and its consequence.  The final part follows
  from the fact that hyperplanes with linearly independent normal
  vectors have a common point of intersection.  Such a point is not
  moved by any of the $r_i$ and thus is fixed by $w$.
\end{proof}

The easy way to establish an upper bound on reflection length is to
construct a factorization.  For this we need a few lemmas.

\begin{lem}[Fix-sets and reflections]\label{lem:fix-refl}
  If $w$ is an elliptic isometry and $r$ is a reflection whose
  hyperplane intersects $\fix(w)$, then $w'=rw$ is elliptic and the
  dimensions of $\fix(w)$ and $\fix(w')$ are at most $1$ apart.
\end{lem}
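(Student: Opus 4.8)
The plan is to work in the vector space picture by choosing a convenient basepoint. Since $r$'s hyperplane meets $\fix(w)$, pick a point $p$ in that intersection and use it to identify $E$ with $V$; then $w$ becomes an orthogonal linear transformation of $V$ with $\fix(w)$ a linear subspace, and $r = r_\alpha$ is a linear reflection with $p$ on its hyperplane, so $r$ is linear in these coordinates as well. Hence $w' = rw$ is a linear orthogonal transformation fixing $p$, which is exactly the statement that $w'$ is elliptic. So the content of the lemma is entirely the dimension claim: $\bigl|\dim\fix(rw) - \dim\fix(w)\bigr| \le 1$.

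For the dimension bound, I would argue via move-sets and Proposition~\ref{prop:lower}, using Lemma~\ref{lem:inv-comp} to translate back to fix-sets. Writing $\mov(w)$ as the span of the roots of any reflection factorization of $w$, a minimal factorization has length $\lr(w) = \dim\mov(w) = n - \dim\fix(w)$ (the elliptic case of Scherk's theorem, which by Proposition~\ref{prop:lower} gives $\ge$; I may cite Theorem~\ref{thm:lr} for the reverse, or just prove the easy inequalities I need). Prepending $r$ to a minimal factorization of $w$ gives a length-$(\lr(w)+1)$ factorization of $rw$, so $\lr(rw) \le \lr(w)+1$; symmetrically, since $w = r(rw)$, we get $\lr(w) \le \lr(rw)+1$. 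Therefore $|\lr(rw) - \lr(w)| \le 1$. Since both $w$ and $w'$ are elliptic, $\dim\mov = \lr$ for each, and by Lemma~\ref{lem:inv-comp} $\dim\fix(w) = n - \lr(w)$ and $\dim\fix(w') = n - \lr(w')$; subtracting, $|\dim\fix(w') - \dim\fix(w)| = |\lr(w') - \lr(w)| \le 1$, which is the claim.

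The one subtlety — and the main thing to be careful about — is whether $\lr$ really equals $\dim\mov$ for elliptic isometries, i.e.\ the sharpness of the lower bound in Proposition~\ref{prop:lower}; if I want a self-contained argument here I should not lean on Theorem~\ref{thm:lr} stated later. A clean way around this: one doesn't actually need exact values, only the two inequalities $\lr(rw)\le\lr(w)+1$ and $\lr(w)\le\lr(rw)+1$ together with the fact that for an elliptic isometry $\dim\fix$ is determined by $\lr$. But that last fact is itself essentially Scherk. The honest fix is to prove directly, using the orthogonal linear model, that an elliptic $w$ with $\dim\fix(w) = n-k$ admits a reflection factorization of length $k$: decompose $V = \fix(w) \oplus \mov(w)$, restrict $w$ to the $k$-dimensional invariant subspace $\mov(w)$ where it acts with no nonzero fixed vector, and factor that restriction as a product of $k$ reflections of $\mov(w)$ (by induction on $k$, peeling off one reflection to drop the dimension of the fixed space by exactly one — which is the spherical analogue already present in the literature, e.g.\ \cite{BradyWatt02}), extending each by the identity on $\fix(w)$. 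Combined with the lower bound of Proposition~\ref{prop:lower} this pins down $\lr(w) = \dim\mov(w)$, and the dimension bound for the lemma follows as above.
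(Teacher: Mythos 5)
Your argument is correct, but it takes a genuinely different and much heavier route than the paper's. The paper disposes of the lemma in two elementary observations: the hypothesized point of $H \cap \fix(w)$ is fixed by $w'=rw$ (so $w'$ is elliptic), and $\fix(w')$ contains $H \cap \fix(w)$, which is either $\fix(w)$ itself or a codimension-$1$ subspace of it, so the dimension drops by at most $1$; then, since $w = rw'$ and the same point lies in $H\cap\fix(w')$, the roles of $w$ and $w'$ can be reversed to bound the increase by $1$ as well. No reflection length is needed. You instead deduce the bound from $|\lr(rw)-\lr(w)|\le 1$ together with the elliptic case of Scherk's theorem ($\lr = \dim\mov$), and you correctly spot the danger: in this paper Theorem~\ref{thm:lr} rests on Proposition~\ref{prop:ell-upper}, which rests on Lemma~\ref{lem:fix-point}, which cites the very lemma you are proving, so quoting it would be circular. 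Your fix --- proving the elliptic upper bound independently by splitting $V=\dir(\fix(w))\oplus\mov(w)$ and factoring the restriction to $\mov(w)$ via the spherical result of \cite{BradyWatt02} --- does break the circle and is sound (the restriction is indeed invariant with no nonzero fixed vector), though the ``peel off one reflection'' induction is left sketchy and is exactly the Cartan--Dieudonn\'e-type input the paper is trying to build up to through this lemma. So your proof works and even yields Scherk's equality as a byproduct, but it reverses the paper's logical order and imports the spherical factorization theorem where a two-line containment-and-symmetry argument suffices; note also that your own first paragraph (a common fixed point of $r$ and $w$) already contains everything needed to run the paper's shorter argument.
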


\begin{proof}
  Let $H$ be the hyperplane of $r$ and note that the hypothesized
  point in $H \cap \fix(w)$ shows that $w'=rw$ is elliptic and that
  $\fix(w)$ and $\fix(w')$ have points in common.  Moreover,
  $\fix(w')$ certainly contains $\fix(r) \cap \fix(w) = H \cap
  \fix(w)$, a space that is either $\fix(w)$ or a codimension~$1$
  subspace of $\fix(w)$. Thus the dimension of the fix-set decreases
  by at most $1$.  Finally, since reflecting twice is trivial, $rw' =
  w$ and $w'$ and $r$ satisfy the same hypotheses as $w$ and $r$.
  Reversing the roles of $w$ and $w'$ shows the dimension increases by
  at most $1$.
\end{proof}

\begin{lem}[Fixing points]\label{lem:fix-point}
  Let $w$ be a nontrivial elliptic isometry of $E$ whose fix-set is a
  $k$-dimensional affine subspace $B$. For each $(k+1)$-dimensional
  affine subspace $C$ containing $B$, there is a unique reflection $r$
  such that $w' = rw$ and $\fix(w') = C$.
\end{lem}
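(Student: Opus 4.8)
The plan is to reduce the statement to a choice of hyperplane and then verify uniqueness via a dimension count. Since $w$ is a nontrivial elliptic isometry, fix a basepoint in $\fix(w) = B$ so that $E$ is identified with $V$ and $w$ becomes an orthogonal linear transformation; under this identification $B$ becomes a $k$-dimensional linear subspace and $C$ a $(k+1)$-dimensional linear subspace containing it. A reflection $r$ with hyperplane $H$ satisfies $H \supset B$ if and only if $H$ contains $B$, and by Lemma~\ref{lem:fix-refl} any such $r$ makes $w' = rw$ elliptic with $\fix(w') \supseteq H \cap \fix(w) = H \cap B$; I want this fix-set to be exactly $C$, which forces $H$ to be a very particular hyperplane.

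First I would identify the candidate hyperplane. Inside the $1$-dimensional space $C \ominus B$ (the orthogonal complement of $B$ inside $C$), the transformation $w$ acts; since $w$ fixes $B$ and preserves the orthogonal decomposition $V = B \oplus B^\perp$ by Lemma~\ref{lem:inv-orth} applied in this linear setting, $w$ restricts to an orthogonal transformation of $C \ominus B$. There are two cases. If $w$ acts nontrivially on $C \ominus B$ it must act as $-\id$ there (the only nontrivial orthogonal map of a line), and then I take $H$ to be $B \oplus (C\ominus B)^{\perp_C}$—more precisely, I want $H$ to be the hyperplane through $B$ whose reflection $r$ "undoes" the action of $w$ on $C \ominus B$ while leaving a new fixed direction. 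The cleaner way: let $\ell$ be the line $C \ominus B$; the composite of $w|_\ell$ with a reflection of $V$ fixing a hyperplane containing $B$ should fix all of $C$. One checks that the reflection $r = w r_0 w^{-1}$-type manipulations, or simply: among hyperplanes $H$ with $B \subseteq H \subseteq $ some space meeting $C$, exactly one has the property that $rw$ fixes $C$ pointwise, namely the one whose root is chosen so that $r$ agrees with $w^{-1}$ on the line $\ell$. If instead $w$ acts trivially on $C\ominus B$, then $\fix(w)$ already contains $C$, contradicting $\dim \fix(w) = k$; so this case does not arise and $w$ genuinely acts as $-\id$ on $\ell$.

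Concretely, then, let $\alpha$ be a unit vector spanning $\ell = C \ominus B$ and consider the reflection $r = r_\beta$ for a root $\beta$ to be determined lying in $\spn(B)^\perp$ so that $H_\beta \supseteq B$. For $rw$ to fix $C$ we need $rw$ to fix $\alpha$, i.e. $r(w(\alpha)) = r(-\alpha) = \alpha$, i.e. $r$ sends $\alpha$ to $-\alpha$; the unique reflection of $V$ fixing $B$ pointwise and negating $\alpha$ is the one with root $\beta = \alpha$ (reflecting across $H_\alpha \supseteq B$). So $r = r_\alpha$. One then checks $w' = r_\alpha w$ is elliptic (Lemma~\ref{lem:fix-refl}, since $H_\alpha \supseteq B$ meets $\fix(w)$), that $\fix(w') \supseteq C$ since $w'$ fixes $B$ and fixes $\alpha$, and that $\dim \fix(w') \le k+1$ by Lemma~\ref{lem:fix-refl}; hence $\fix(w') = C$ exactly. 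For uniqueness, suppose $r'w$ also has fix-set exactly $C$; then $r'$ fixes $C \cap \fix(w) \supseteq$... rather, $(r')^{-1}$ applied to $w'$ relations force $r'$ and $r_\alpha$ to agree on all of $C$ and on $\fix(w) = B$, hence on $\spn(B \cup \{\alpha\}) = \spn(C)$, a hyperplane-or-more, pinning down $r'$.

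The main obstacle I anticipate is the uniqueness argument and correctly handling the bookkeeping when $C$ is not a hyperplane of $V$ (i.e. $k+1 < n$): two distinct reflections can agree on $C$, so I cannot simply say "$r'$ agrees with $r_\alpha$ on $C$ hence equals it." The resolution is that any valid $r'$ must fix $B$ pointwise and must negate $\alpha$ (forced by $r'w$ fixing $\alpha$), and a reflection of $V$ fixing a $k$-dimensional subspace pointwise and negating a specific vector $\alpha \perp B$ is unique—its hyperplane must contain $B$ and its root must be $\alpha$ (up to sign). That is the content of the Euclidean-reflection uniqueness from Definition~\ref{def:reflections}: a reflection is determined by its root. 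So uniqueness follows once I show every valid $r'$ has root $\pm\alpha$, which is exactly the constraint $r'(\alpha) = -\alpha$ combined with $r'|_B = \id$.
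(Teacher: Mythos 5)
There is a genuine error at the heart of your construction. After fixing a basepoint in $B$ you assert that $w$ ``restricts to an orthogonal transformation of $C \ominus B$'' and hence acts on the line $\ell = C\ominus B$ as $-\id$, so that $w(\alpha) = -\alpha$ and the desired reflection is $r_\alpha$. But $w$ only preserves $B$ and $B^\perp$; it has no reason to preserve the line $\ell$, since $C$ is an arbitrary $(k+1)$-dimensional subspace containing $B$ and is not $w$-invariant in general. Concretely, take $n=2$, $w$ a rotation by $90^\circ$ about the origin, $B = \{0\}$ (so $k=0$), $C$ the $x$-axis and $\alpha = e_1$: then $w(\alpha) = e_2 \neq -\alpha$, and your candidate $r_\alpha$ (reflection across the $y$-axis) gives $r_\alpha w(e_1) = e_2 \neq e_1$, so $\fix(r_\alpha w) \neq C$. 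The correct reflection in this example is the one across the line bisecting $e_1$ and $e_2$. The same false premise infects your uniqueness argument, which concludes that any valid $r'$ must have root $\pm\alpha$; what is actually forced by $r'w(\alpha)=\alpha$ is that $r'$ swaps $\alpha$ and $w(\alpha)$, and its root is proportional to $w(\alpha)-\alpha$, not to $\alpha$.

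The repair is exactly the perpendicular-bisector construction, which is how the paper argues: pick any $x \in C \setminus B$; since $x \neq w(x)$, the points equidistant from $x$ and $w(x)$ form a hyperplane $H$, and the reflection $r$ in $H$ is the unique reflection sending $w(x)$ to $x$, hence the only possible candidate (this already gives uniqueness, with no need to first show $r'$ fixes $B$). For existence, every $y \in \fix(w)=B$ satisfies $d(x,y) = d(w(x),w(y)) = d(w(x),y)$, so $B \subset H$; then $rw$ fixes $B$ and $x$, hence their affine hull $C$ (Proposition~\ref{prop:inv-aff}), and Lemma~\ref{lem:fix-refl} caps $\dim\fix(rw)$ at $k+1$, giving $\fix(rw) = C$. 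Your final steps (affine hull plus the dimension cap from Lemma~\ref{lem:fix-refl}) are fine; it is the identification of the root as $\alpha$, resting on the claim $w(\alpha)=-\alpha$, that fails.
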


\begin{proof}
  Any point $x$ in $C \setminus B$ is not fixed by $w$ and the set of
  points equidistant from $x$ and $w(x)$ is a hyperplane $H$.  The
  reflection $r$ that fixes $H$ is the unique reflection sending
  $w(x)$ to $x$ and thus the unique reflection where $w' = rw$ fixes
  $x \in C$.  In other words this is the only reflection for which the
  assertion might be true.  Next, since $w$ is an isometry $d(x,y) =
  d(w(x),w(y)) = d(w(x),y)$ for all $y\in \fix(w)$.  Thus $B \subset
  H$ and all of $B$ is fixed by $w' = rw$.  In particular, $\fix(w')$
  contains $x$ and $B$ and by Proposition~\ref{prop:inv-aff} it
  contains their affine hull which is $C$.  By
  Lemma~\ref{lem:fix-refl} $\fix(w')$ cannot be an affine subspace
  properly containing $C$.
\end{proof}

These lemmas make it easy to construct reflection factorizations.

\begin{prop}[Elliptic upper bound]\label{prop:ell-upper}
  Every elliptic isometry $w$ with a $k$-dimensional move-set has a
  length~$k$ reflection factorization.  
\end{prop}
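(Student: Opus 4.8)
The plan is to produce the factorization recursively, peeling off one reflection at a time and using each reflection to enlarge the fixed-point set by exactly one dimension. The two ingredients are Lemma~\ref{lem:fix-point}, which supplies the reflection at each stage, and Lemma~\ref{lem:inv-comp}, which lets me pass between the dimension of the move-set and the dimension of the fix-set.

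First I would dispose of the degenerate case $k=0$. Here $\mov(w)$ is a $0$-dimensional linear subspace, hence $\mov(w) = \{\zer\}$, so $w$ moves no point and $w = \id$; the empty product is then a reflection factorization of length $0$. So assume $k \geq 1$. By Lemma~\ref{lem:inv-comp} the fix-set $\fix(w) = \ms(w)$ is an affine subspace of $E$ of dimension $n - k$, and since $k \geq 1$ this is a proper subspace, so $w$ is nontrivial. I would then fix a saturated flag of affine subspaces
\[
  \fix(w) = B_0 \subset B_1 \subset \cdots \subset B_k = E
\]
with $\dim B_i = n - k + i$, which exists because $\aff(E)$ is graded.

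Next I would run the recursion. Put $w_0 = w$ and suppose inductively that $w_{i-1}$ is a nontrivial elliptic isometry with $\fix(w_{i-1}) = B_{i-1}$; this holds for $i = 1$. Since $B_i$ contains $B_{i-1}$ and has dimension one larger, Lemma~\ref{lem:fix-point} yields a (unique) reflection $r_i$ for which $w_i := r_i w_{i-1}$ satisfies $\fix(w_i) = B_i$. The set $B_i$ is nonempty, so $w_i$ is elliptic, and when $i < k$ its fix-set has dimension $n - k + i < n$, so $w_i$ is nontrivial and the recursion proceeds. After $k$ steps $\fix(w_k) = B_k = E$, forcing $w_k = \id$; that is, $r_k r_{k-1} \cdots r_1 w = \id$. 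Because a reflection is its own inverse, rearranging gives $w = r_1 r_2 \cdots r_k$, a reflection factorization of length $k$.

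The only point requiring attention — and the closest thing to an obstacle — is verifying that the hypotheses of Lemma~\ref{lem:fix-point} genuinely survive each pass: that $w_{i-1}$ is nontrivial (immediate from $\dim B_{i-1} < n$) and that its fix-set is \emph{exactly} $B_{i-1}$ rather than something larger (which is precisely the conclusion delivered by Lemma~\ref{lem:fix-point} at the previous stage, with the base case $\fix(w_0) = B_0$ coming from the proposition's hypothesis together with Lemma~\ref{lem:inv-comp}). Once this bookkeeping is in place the proposition follows. Equivalently, one could organize the whole argument as a single induction on $k$: apply Lemma~\ref{lem:fix-point} once to reach an elliptic isometry whose move-set has dimension $k - 1$ (again by Lemma~\ref{lem:inv-comp}) and invoke the inductive hypothesis.
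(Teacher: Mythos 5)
Your proof is correct and follows essentially the same route as the paper: use Lemma~\ref{lem:inv-comp} to translate the $k$-dimensional move-set into a codimension-$k$ fix-set, choose a saturated chain of affine subspaces from $\fix(w)$ up to $E$, and repeatedly apply Lemma~\ref{lem:fix-point} to peel off one reflection per step until the identity is reached. Your extra care with the $k=0$ case and with verifying nontriviality at each stage is fine but matches the paper's argument in substance.
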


\begin{proof}
  By Lemma~\ref{lem:inv-comp} the affine subspace $B = \fix(w)$ has
  codimension~$k$.  Next, select a chain of affine subspaces $B=B_k
  \subset B_{k-1} \subset \cdots B_1 \subset B_0 = E$ where the
  subscript indicates its codimension.  By Lemma~\ref{lem:fix-point}
  there is a reflection $r_k$ such that $w_{k-1} = r_k w$ is an
  elliptic with $\fix(w_{k-1}) = B_{k-1}$.  Iteratively applying
  Lemma~\ref{lem:fix-point} we can find reflections $r_{k-1}, \ldots,
  r_2, r_1$ and elements $w_{k-2}, \ldots, w_1, w_0$ where $w_{i-1} =
  r_i w_i$ is an elliptic with $\fix(w_{i-1}) = B_{i-1}$ for $i =
  k,\ldots,2,1$.  In the end, $w_0 = r_1 w_1 = \cdots = r_1 r_2 \cdots
  r_k w$ but $w_0$ is the identity since it fixes all of $E$.
  Rearranging shows $w= r_k \cdots r_2 r_1$.
\end{proof}

\begin{prop}[Hyperbolic upper bound]\label{prop:hyp-upper}
  Every hyperbolic isometry with a $k$-dimensional move-set has a
  length $k+2$ reflection factorization.
\end{prop}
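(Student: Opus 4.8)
The plan is to use the standard splitting to reduce the hyperbolic case to the elliptic case already handled in Proposition~\ref{prop:ell-upper}. Let $w$ be hyperbolic with move-set in standard form $\mov(w) = U + \mu$, where $\mu \in U^\perp$ is nontrivial and $U$ has dimension $k-1$ (since $\mov(w)$ is a $k$-dimensional affine subspace, its space of directions $U$ has dimension $k-1$). By Proposition~\ref{prop:std-split} we may write $w = t_\mu u$ where $u$ is elliptic with $\mov(u) = U$, a subspace of dimension $k-1$. By Proposition~\ref{prop:ell-upper}, $u$ factors as a product of $k-1$ reflections, so it suffices to express the translation $t_\mu$ as a product of two reflections, since then $w = t_\mu u$ becomes a product of $(k-1)+2 = k+1$ reflections. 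But wait — the claimed bound is $k+2$, not $k+1$, so I should be more careful about whether the two reflections used for $t_\mu$ can be chosen compatibly, or whether there is a genuine obstruction forcing the extra reflection.

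The resolution is that a translation $t_\mu$ with $\mu \neq 0$ has a $1$-dimensional move-set (a single point in $V$, namely $\{\mu\}$, which is $0$-dimensional as an affine subspace) — let me recompute: $\mov(t_\mu) = \{\mu\}$ is a single point, hence a $0$-dimensional affine subspace, and $t_\mu$ is hyperbolic. By Proposition~\ref{prop:lower}, $\lr(t_\mu) > \dim(\mov(t_\mu)) = 0$, and in fact $t_\mu$ is the product of exactly two reflections in parallel hyperplanes perpendicular to $\mu$ and separated by distance $\vert\mu\vert/2$; this is a standard fact I would either cite or verify directly by composing two such reflections. So $t_\mu = r' r''$ with $r', r''$ reflections whose hyperplanes have direction $H_{\alpha}$ where $\alpha$ is the unit vector along $\mu$. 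Combining, $w = r' r'' u = r' r'' r_1 \cdots r_{k-1}$, a product of $k+1$ reflections. To see why the bound in the statement is $k+2$ rather than $k+1$, note that $k+1$ is generically correct, but the honest statement being proved here is merely an \emph{upper} bound, and $k+2$ is a safe bound that always holds; alternatively, the extra slack may be needed because the $k-1$ reflections factoring $u$ could be chosen with roots inside $U$, making all $k+1$ roots lie in the $k$-dimensional span $U \oplus L_\mu$, which is consistent. So in fact the cleaner statement gives $k+1$; but since the proposition only asks for $k+2$, I would simply insert one additional reflection $r$ followed by $r$ again (using $r^2 = 1$) if a uniform $k+2$ presentation is desired for later bookkeeping, or — more likely — I would prove the sharp bound $k+1$ and remark that this suffices.

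Let me restructure to match the stated $k+2$. The cleanest route: write $w = t_\mu u$ with $u$ elliptic and $\dim \mov(u) = \dim U = k-1$ by Proposition~\ref{prop:std-split} and Lemma~\ref{lem:inv-comp}. Apply Proposition~\ref{prop:ell-upper} to get $u = r_1 \cdots r_{k-1}$. Then write $t_\mu$ as a product of two reflections as above, giving $w$ as a product of $k+1$ reflections. The bound $k+1 \le k+2$ gives the claim. The main obstacle is the elementary but necessary verification that a nontrivial translation factors as a product of two reflections (equivalently, that the composition of two reflections in parallel hyperplanes is a translation by twice the vector between them), which I would dispatch by a direct computation in $V$ after choosing a basepoint: if $\alpha$ is the unit vector along $\mu$ and the hyperplanes are $H_\alpha$ and $H_\alpha + \tfrac{1}{2}\mu$, then reflecting a point across the first and then the second shifts it by exactly $\mu$. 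With that in hand the proposition follows immediately, and I would note the refinement to $k+1$ explicitly since the sharp value will be needed when identifying reflection length in Theorem~\ref{thm:lr}.
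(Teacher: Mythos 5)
Your overall route---standard splitting $w = t_\mu u$, the elliptic upper bound for $u$, and writing $t_\mu$ as a product of two reflections in parallel hyperplanes---is exactly the paper's argument. But there is a genuine error in the dimension count that derails the conclusion. By definition the dimension of an affine subspace is the dimension of its space of directions, so if $\mov(w) = U + \mu$ is $k$-dimensional then $\dim U = k$, not $k-1$ (you even apply the correct convention a moment later when you call $\mov(t_\mu)=\{\mu\}$ zero-dimensional, so your two uses are inconsistent; the quantity of dimension $k+1$ is $\spn(\mov(w))$, not anything of dimension $k-1$). With the correct count, Proposition~\ref{prop:std-split} gives an elliptic $u$ with $\mov(u)=U$ of dimension $k$, Proposition~\ref{prop:ell-upper} gives a length-$k$ factorization of $u$, and adding the two reflections for $t_\mu$ yields length $k+2$, which is precisely what the proposition claims.

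The downstream consequences of the off-by-one are not harmless. Your claimed ``sharper'' bound of $k+1$ is false: by Proposition~\ref{prop:lower} a hyperbolic isometry satisfies $\lr(w) > \dim(\mov(w)) = k$, and by Lemma~\ref{lem:parity} and Theorem~\ref{thm:lr} its reflection length is exactly $k+2$, so no length-$(k+1)$ factorization exists; recording a refinement to $k+1$ as input to Theorem~\ref{thm:lr} would contradict the very parity argument that theorem uses to rule out $k+1$. (Also, the suggested padding by $r$ followed by $r$ changes length by $2$, so it could never convert a length-$(k+1)$ factorization into a length-$(k+2)$ one.) Your explicit verification that a nontrivial translation is a product of two reflections in parallel hyperplanes perpendicular to $\mu$ is correct and is a reasonable elaboration of a step the paper asserts without proof; once the dimension of $U$ is fixed, the rest of your write-up collapses to the paper's proof.
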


\begin{proof}
  Let $w$ be a hyperbolic isometry whose move-set is $k$-dimensional,
  let $\mov(w) = U +\mu$ be its standard form, and let $w = t_\mu u$
  be the standard splitting of $w$ where $u$ is an elliptic with
  $\mov(u) = U$ (Proposition~\ref{prop:std-split}).  By
  Proposition~\ref{prop:ell-upper} $u$ has a length~$k$ reflection
  factorization and the translation $t_\mu$ is a product of two
  parallel reflections.  Thus $w$ has a length~$k+2$ reflection
  factorization.
\end{proof}

To complete the proof of Theorem~\ref{thm:lr}, we need one more
observation.

\begin{lem}[Parity]\label{lem:parity}
  The lengths of all reflection factorizations of a given element have
  the same parity and the lengths of two elements differing by a
  reflection have opposite parity.  More specifically, if $w$ and $w'$
  are isometries and $r$ is a reflection such that $w' = rw$ then
  $\lr(w') = \lr(w) - 1$ or $\lr(w') = \lr(w) + 1$.
\end{lem}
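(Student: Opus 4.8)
The plan is to reduce both assertions to the existence of a single group homomorphism $\sigma\colon W\to\{\pm1\}$ carrying every reflection to $-1$. Granting such a $\sigma$, each reflection factorization $w=r_1r_2\cdots r_k$ gives $\sigma(w)=(-1)^k$, so the parity of $k$ depends only on $w$ and not on the chosen factorization, which is the first claim; and if $w'=rw$ for a reflection $r$ then $\sigma(w')=\sigma(r)\sigma(w)=-\sigma(w)$, so $\lr(w)$ and $\lr(w')$ have opposite parity and in particular differ.

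To produce $\sigma$ I would use the \emph{linear part} of an isometry. Fixing a basepoint to identify $E$ with $V$, the classical fact that euclidean isometries are affine (see \cite{SnapperTroyer89}) writes each $w\in W$ as $\lambda\mapsto A_w(\lambda)+\beta_w$ with $A_w$ a linear isometry of $V$ and $\beta_w\in V$; linearity of $A_w$ follows since a basepoint-fixing isometry of $V$ preserves the inner product, by polarization. Composition of affine maps shows $w\mapsto A_w$ is a homomorphism from $W$ to the group $\mathrm{O}(V)$ of linear isometries of $V$, and $A_w$ is in fact independent of the chosen basepoint since $A_w(\lambda)$ is always the vector from $w(x)$ to $w(x+\lambda)$. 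Setting $\sigma(w)=\det(A_w)$ then defines a homomorphism $W\to\{\pm1\}$. Finally, if $r$ is a reflection of $E$ with root $\pm\alpha$, then by Definition~\ref{def:reflections} it moves points only in the $\alpha$ direction and squares to the identity, which forces its linear part to be the linear reflection $r_\alpha$ fixing $H_\alpha$ and negating $\alpha$; hence $\sigma(r)=\det(r_\alpha)=-1$, as required. The only point needing care is that the linear part is well defined and computes correctly on reflections, and the displayed description of $A_w(\lambda)$ settles this.

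It remains to upgrade ``opposite parity'' to the sharper statement $\lr(w')=\lr(w)\pm1$, and this is just the triangle inequality recorded for reflection length. Since $\lr(r)=1$ and $w'=rw$, submultiplicativity gives $\lr(w')\le\lr(r)+\lr(w)=\lr(w)+1$, and applying the same bound to $w=rw'$ gives $\lr(w)\le\lr(w')+1$. Thus $|\lr(w')-\lr(w)|\le1$, and combined with the opposite‑parity conclusion this forces equality to $1$. The main obstacle in the whole argument is conceptual rather than computational: recognizing that the parity phenomenon is exactly the $\mathrm{O}(V)/\mathrm{SO}(V)$ grading pulled back along the linear‑part homomorphism, after which everything is a one‑line verification.
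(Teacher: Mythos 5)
Your proof is correct and is essentially the paper's argument: the paper also partitions isometries by whether they preserve orientation (your determinant-of-the-linear-part homomorphism $\sigma$, phrased as bipartiteness of the Cayley graph with respect to $R$) to get the parity statements, and then uses the same triangle-inequality bound $|\lr(w')-\lr(w)|\le 1$ together with parity to conclude $\lr(w')=\lr(w)\pm 1$. You simply make the sign homomorphism and its well-definedness explicit, which the paper leaves implicit.
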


\begin{proof}
  Partitioning isometries based on whether or not they preserve
  orientation shows that the Cayley graph of $W$ respect to $R$ is a
  bipartite graph and this has the first assertion as a consequence.
  For the second assertion, note that $\lr(w')$ and $\lr(w)$ differ by
  at most $1$ by the way reflection length is defined and parity rules
  out equality.
\end{proof}

\begin{thm}[Reflection length]\label{thm:lr}
  The reflection length of an isometry is determined by its basic
  invariants.  More specifically, let $w$ be an isometry whose
  move-set is $k$-dimensional.  When $w$ is elliptic, $\lr(w) = k$ and
  when $w$ is hyperbolic, $\lr(w) = k+2$.
\end{thm}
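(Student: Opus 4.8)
The plan is to assemble the theorem from the bounds and the parity result already in place, since essentially all of the geometric content has been developed in the preceding propositions. First I would record that the two data appearing in the statement — the dimension $k = \dim(\mov(w))$ and the elliptic/hyperbolic dichotomy — are both manifestly functions of the basic invariants: the number $k$ is read off the move-set directly, and by the definition of elliptic and hyperbolic, $w$ lies in the first class exactly when $\mov(w)$ is a linear subspace and in the second exactly when it is a nonlinear affine subspace. Hence it suffices to verify the stated numerical formula for $\lr(w)$, and the ``determined by basic invariants'' assertion follows at once.

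For the elliptic case I would simply combine Proposition~\ref{prop:lower} with Proposition~\ref{prop:ell-upper}. The former gives $\lr(w) \geq \dim(\mov(w)) = k$, and the latter exhibits an explicit reflection factorization of length $k$, so $\lr(w) \leq k$; together these force $\lr(w) = k$. No parity considerations are needed here.

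For the hyperbolic case I would again bound from both sides, but now close the gap using parity. The strengthened lower bound in Proposition~\ref{prop:lower} gives $\lr(w) > k$, hence $\lr(w) \geq k+1$, while Proposition~\ref{prop:hyp-upper} produces a reflection factorization of length $k+2$, so $\lr(w) \leq k+2$. Thus a priori $\lr(w) \in \{k+1, k+2\}$. To eliminate $k+1$, I would invoke Lemma~\ref{lem:parity}: all reflection factorizations of $w$ share a common parity, and since $w$ admits one of length $k+2$, any minimal factorization — being also a reflection factorization — has length congruent to $k+2 \equiv k \pmod 2$. Since $k+1 \not\equiv k \pmod 2$, this rules out $\lr(w) = k+1$, leaving $\lr(w) = k+2$.

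The only step with any subtlety is this last elimination of $k+1$; everything else is bookkeeping. It is worth emphasizing that the strict inequality $\lr(w) > \dim(\mov(w))$ for hyperbolic isometries — which came from the fact that hyperplanes with linearly independent normals share a common fixed point, so a short factorization would force $w$ to be elliptic — is exactly what makes the parity argument bite: without it we would only know $\lr(w) \geq k$, and parity alone would still permit $\lr(w) = k$.
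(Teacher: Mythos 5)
Your proposal is correct and matches the paper's own proof: the elliptic case combines Proposition~\ref{prop:lower} with Proposition~\ref{prop:ell-upper}, and the hyperbolic case pins $\lr(w)$ to $\{k+1,k+2\}$ via Propositions~\ref{prop:lower} and~\ref{prop:hyp-upper} and then eliminates $k+1$ with the parity statement of Lemma~\ref{lem:parity}, exactly as the paper does.
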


\begin{proof}
  For elliptic isometries, Propositions~\ref{prop:lower}
  and~\ref{prop:ell-upper} complete the proof.  For hyperbolic
  isometries, Propositions~\ref{prop:lower} and~\ref{prop:hyp-upper}
  show that $\lr(w)$ is $k+1$ or $k+2$.  The former is ruled out
  because $w$ has a length $k+2$ factorization and by
  Lemma~\ref{lem:parity} $\lr(w)$ has the same parity.
\end{proof}

One corollary of Theorem~\ref{thm:lr} is that the factorizations
produced by Propositions~\ref{prop:ell-upper} and~\ref{prop:hyp-upper}
are now known to be minimal length.  We conclude this section by
characterizing some of the reflections that occur in minimal length
factorizations of a fixed isometry.  For this we need an elementary
observation.

\begin{lem}[Rewriting factorizations]\label{lem:rewriting}
  Let $w = r_1 r_2 \cdots r_k$ be a reflection factorization. For any
  selection $1 \leq i_1 < i_2 < \cdots < i_j \leq k$ of positions
  there is a length~$k$ reflection factorization of $w$ whose first
  $j$ reflections are $r_{i_1} r_{i_2} \cdots r_{i_j}$ and another
  length~$k$ reflection factorization of $w$ where these are the last
  $j$ reflections in the factorization.
\end{lem}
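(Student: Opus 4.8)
The plan is to reduce everything to a single braid-type move on adjacent reflections. The key elementary fact is that for any two reflections $r$ and $s$, the product $rs$ can be rewritten as $s'r'$ where $s' = rsr^{-1} = rsr$ is again a reflection (conjugate of $s$) and $r' = r$; alternatively $rs = (rsr)(r) = s'r$, and symmetrically $rs = s(sr s) = s r''$. In other words, given a length-two factorization $rs$ we may ``pull $r$ to the right past $s$'' at the cost of conjugating $s$, and likewise pull $s$ to the left past $r$. This is nothing more than inserting $r^{-1}r = \id$ (or $s s^{-1} = \id$) in the middle and regrouping, and it preserves both the product and the length. Since conjugates of reflections are reflections, the rewritten expression is again a genuine length-$2$ reflection factorization.

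With this move in hand, the first half of the lemma follows by a standard ``bubble sort'' argument. Given $w = r_1 r_2 \cdots r_k$ and chosen positions $i_1 < i_2 < \cdots < i_j$, I first move $r_{i_1}$ to the front: repeatedly apply the swap move to the adjacent pair consisting of $r_{i_1}$ and the reflection immediately to its left, each time conjugating that left neighbor and leaving a length-$k$ reflection factorization of the same element $w$, until $r_{i_1}$ occupies position~$1$. Crucially, the reflections originally in positions $i_2, \ldots, i_j$ are to the right of $r_{i_1}$ and are never touched by this sweep, so they remain intact and in the same relative order. Next I bring the (possibly conjugated, but still untouched) reflection from position $i_2$ up to position~$2$ by the same procedure, sweeping it leftward past everything except the $r_{i_1}$ now frozen at position~$1$. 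Iterating, after $j$ such sweeps the factorization has the form $r_{i_1} r_{i_2} \cdots r_{i_j} r'_{\ell_1} \cdots r'_{\ell_{k-j}}$, still of length $k$ and still equal to $w$, which is exactly the first claim. For the second claim, apply the identical argument with the roles of left and right reversed — or simply apply the first claim to the reversed factorization $w^{-1} = r_k \cdots r_1$ (using $\lr(w) = \lr(w^{-1})$, Lemma~\ref{lem:parity}-style considerations are not even needed here since both are products of reflections), selecting the complementary positions, and then invert the resulting factorization.

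I expect the main obstacle to be bookkeeping rather than mathematics: one must be careful that the chosen reflections $r_{i_1}, \ldots, r_{i_j}$ themselves are \emph{not} conjugated during the sweeps (each is the object being moved, and the swap move conjugates only the \emph{other} reflection in the pair), so that they appear unchanged in the final factorization, while the remaining $k-j$ reflections may well have been conjugated several times. Phrasing the induction cleanly — for instance, inducting on $\sum_t (i_t - t)$, the total number of adjacent transpositions needed — makes this precise and avoids any ambiguity about which reflections get altered. No geometric input beyond ``a conjugate of a reflection is a reflection'' is required, so the proof is genuinely elementary.
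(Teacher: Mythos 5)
Your proposal is correct and follows essentially the same route as the paper: the single key fact that a conjugate of a reflection is a reflection gives the adjacent swap moves $rr' = (rr'r)\,r = r'\,(r'rr')$, and iterating these moves the selected reflections to the front (or back) without changing the product or the length. The paper states this in two sentences; your version merely adds the explicit bubble-sort bookkeeping confirming that the selected reflections themselves are never conjugated.
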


\begin{proof}
  Because reflections are closed under conjugation, for any
  reflections $r$ and $r'$ there exist reflections $r''$ and $r'''$
  such that $r r' = r'' r$ and $r' r = r r'''$.  Iterating these
  rewriting operations allows us to move the selected reflections into
  the desired positions without altering the length of the
  factorization.  
\end{proof}

\begin{defn}[Reflections below $w$]
  Let $r$ be a reflection.  By Lemma~\ref{lem:rewriting}, the
  following conditions are equivalent: (1) $\lr(rw) < \lr(w)$ (2) $r$
  is the leftmost reflection in some minimal length factorization of
  $w$ (3) $r$ is a reflection in some minimal length factorization of
  $w$ (4) $r$ is the rightmost reflection in some minimal length
  factorization of $w$ and (5) $\lr(wr) < \lr(w)$.  When these hold,
  we say that $r$ is a \emph{reflection below $w$}.
\end{defn}

\begin{prop}[Motions and reflections]\label{prop:motion-refl}
  If $w$ is an isometry and $x$ is not fixed by $w$ then the unique
  reflection $r$ that sends $x$ to $w(x)$ is a reflection below $w$.
\end{prop}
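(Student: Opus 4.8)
The plan is to verify condition (1) in the definition of a reflection below $w$, namely that $\lr(rw) < \lr(w)$; the other four equivalent conditions then come for free.

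First I would pin down the reflection $r$ and locate its root. Since $x$ is not fixed, $w(x)\neq x$, and---exactly as in the proof of Lemma~\ref{lem:fix-point}---the perpendicular bisector hyperplane $H$ of the segment from $x$ to $w(x)$ carries the unique reflection $r$ interchanging $x$ and $w(x)$. Write $\lambda$ for the (nonzero) vector from $x$ to $w(x)$, so that $\lambda\in\mov(w)$ by definition of the move-set. The root $\alpha$ of $r$ is a unit normal to $\dir(H)$, hence parallel to $\lambda$, so $\alpha\in\R\lambda\subseteq\spn(\mov(w))$. This last containment is the geometric heart of the argument and the step I expect to matter most: it is what keeps the move-set estimate below from landing in a space one dimension too large.

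Second, I would observe that $rw$ is elliptic. Because $r$ is an involution interchanging $x$ and $w(x)$, we get $(rw)(x)=r(w(x))=x$, so $rw$ fixes a point and is therefore elliptic; by Theorem~\ref{thm:lr} this gives $\lr(rw)=\dim\mov(rw)$. To bound $\mov(rw)$, note that for any point $y$ the vector from $y$ to $(rw)(y)=r(w(y))$ is the sum of the vector from $y$ to $w(y)$, which lies in $\mov(w)$, and the vector from $w(y)$ to $r(w(y))$, which lies in $\mov(r)=L_\alpha=\R\alpha$. Hence $\mov(rw)\subseteq \mov(w)+\R\alpha\subseteq\spn(\mov(w))$, the second inclusion using $\alpha\in\spn(\mov(w))$.

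Finally I would split into the two cases of Scherk's theorem. If $w$ is elliptic, then $\mov(w)$ is a $k$-dimensional linear subspace equal to its own span, so $\lr(rw)=\dim\mov(rw)\le k=\lr(w)$; since $w$ and $rw$ differ by the reflection $r$, Lemma~\ref{lem:parity} forbids equality and yields $\lr(rw)<\lr(w)$. If $w$ is hyperbolic with standard form $\mov(w)=U+\mu$, then $\spn(\mov(w))=U\oplus\R\mu$ has dimension $k+1$, so $\lr(rw)=\dim\mov(rw)\le k+1<k+2=\lr(w)$. In either case $\lr(rw)<\lr(w)$, so $r$ is a reflection below $w$. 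The subtlety to watch is the elliptic case, where the dimension count alone gives only "$\le$" and it is precisely the parity lemma that upgrades this to the strict inequality we need---and even that upgrade relies on having first checked $\alpha\in\spn(\mov(w))$.
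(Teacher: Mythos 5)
Your proof is correct, but it takes a genuinely different route from the paper's. The paper verifies condition (3) of the definition of ``reflection below $w$'': it asserts that the factorization constructions of Propositions~\ref{prop:ell-upper} and~\ref{prop:hyp-upper} are flexible enough to be steered through the given point $x$ --- in the elliptic case one chooses the chain of affine subspaces so that the first application of Lemma~\ref{lem:fix-point} uses exactly the point $x$, and the reflection that lemma produces is precisely the perpendicular-bisector reflection $r$. You instead verify condition (1), $\lr(rw)<\lr(w)$, by a direct invariant count: $rw$ fixes $x$ and is elliptic, the root $\alpha$ of $r$ is parallel to the vector from $x$ to $w(x)$ and hence lies in $\spn(\mov(w))$, so $\mov(rw)\subseteq \mov(w)+\R\alpha\subseteq\spn(\mov(w))$, and Scherk's theorem (Theorem~\ref{thm:lr}) plus, in the elliptic case, the parity Lemma~\ref{lem:parity} gives the strict drop in length. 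Each step checks out, including the observation that parity is only needed when $w$ is elliptic, while the hyperbolic case closes on dimension count alone because $\dim\spn(\mov(w))=k+1<k+2$. What your approach buys is a self-contained argument from results already proved at that point, and in particular it makes the hyperbolic case airtight: the standard-splitting factorization of Proposition~\ref{prop:hyp-upper} uses reflections with roots in $\dir(\mov(w))$ or parallel to $\mu$, neither of which is typically the root of $r$, so the ``flexibility'' the paper invokes requires some additional thought there, which your computation supplies. It also anticipates the finer multiplication rules (Propositions~\ref{prop:hyp-refl} and~\ref{prop:ell-refl}) established in the following section; the paper's one-line proof, by contrast, is shorter but leaves these verifications to the reader.
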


\begin{proof}
  That $r$ occurs in some minimal length factorization of $w$ is
  immediate from the flexibility of the constructions used to prove
  Proposition~\ref{prop:ell-upper} and~\ref{prop:hyp-upper}.
\end{proof}

\section{Reflections and Invariants}

In this section we characterize when a reflection $r$ is below an
isometry $w$ in terms of their basic invariants.  We begin with a
corollary of Theorem~\ref{thm:lr}.

\begin{lem}[Move-sets and parity]\label{lem:move-parity}
  For each reflection $r$ and isometry $w$, the dimensions of
  $\mov(w)$ and $\mov(rw)$ have opposite parity.
\end{lem}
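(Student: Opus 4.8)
The plan is to combine Scherk's theorem (Theorem~\ref{thm:lr}) with the parity lemma (Lemma~\ref{lem:parity}) and the elliptic/hyperbolic dichotomy. The key observation is that Theorem~\ref{thm:lr} expresses $\lr(w)$ in terms of $\dim(\mov(w))$ via a formula that depends only on a parity-like invariant: $\lr(w) = \dim(\mov(w))$ when $w$ is elliptic and $\lr(w) = \dim(\mov(w)) + 2$ when $w$ is hyperbolic. In either case, $\lr(w) \equiv \dim(\mov(w)) \pmod 2$, since the correction term is either $0$ or $2$. So the parities of $\lr(w)$ and $\dim(\mov(w))$ always agree.

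First I would record this parity coincidence: for every isometry $w$, $\lr(w)$ and $\dim(\mov(w))$ have the same parity. This is immediate from Theorem~\ref{thm:lr} by the case analysis just described. Next, I would apply Lemma~\ref{lem:parity} to the pair $w$ and $rw$: since $rw$ differs from $w$ by a single reflection, $\lr(rw) = \lr(w) \pm 1$, so $\lr(w)$ and $\lr(rw)$ have opposite parity. Finally, I would chain these facts together: $\dim(\mov(w))$ has the same parity as $\lr(w)$, which has the opposite parity of $\lr(rw)$, which in turn has the same parity as $\dim(\mov(rw))$. Therefore $\dim(\mov(w))$ and $\dim(\mov(rw))$ have opposite parity, which is exactly the claim.

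There is essentially no obstacle here; the lemma is a formal consequence of two results already in hand. The only point requiring a moment's care is making the case analysis in Theorem~\ref{thm:lr} genuinely uniform — one must note that $w$ being elliptic versus hyperbolic changes $\lr(w)$ by an even amount relative to $\dim(\mov(w))$, so the parity statement holds without knowing which type $w$ (or $rw$) is. In particular, we do not need to track whether applying $r$ changes the isometry from elliptic to hyperbolic or vice versa: the parity relation between $\lr$ and $\dim\mov$ is insensitive to that distinction, and Lemma~\ref{lem:parity} handles the effect of the reflection on $\lr$ directly.
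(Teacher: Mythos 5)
Your argument is correct and is essentially the paper's own proof: Lemma~\ref{lem:parity} gives opposite parities for $\lr(w)$ and $\lr(rw)$, and Theorem~\ref{thm:lr} transfers this to the dimensions of the move-sets since the elliptic and hyperbolic formulas differ from $\dim(\mov(w))$ by $0$ or $2$. Your write-up simply makes the mod-$2$ bookkeeping more explicit than the paper does.
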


\begin{proof}
  By Lemma~\ref{lem:parity} $\lr(w)$ and $\lr(rw)$ have opposite
  parity and by Theorem~\ref{thm:lr} the same holds for the dimensions
  of their move-sets.
\end{proof}

\begin{prop}[Move-sets and reflections]\label{prop:move-refl}
  Let $r$ be a reflection with roots $\pm \alpha$, let $w$ be an
  isometry with $\mov(w) = U+ \mu$, and let $U_\alpha = \spn(U \cup
  \{\alpha\})$.  If $\alpha \in U$ then $\mov(rw)$ is a
  codimension~$1$ subspace of $\mov(w)$.  If $\alpha \not \in U$ then
  $\mov(rw) = U_\alpha +\mu$ and it contains $\mov(w)$ as a
  codimension~$1$ subspace.
\end{prop}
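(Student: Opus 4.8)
The plan is to trap $\mov(rw)$ between two affine subspaces whose dimensions differ by exactly one, and then invoke the parity constraint of Lemma~\ref{lem:move-parity} to decide which of the two is actually attained.

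I would begin with the elementary inclusion $\mov(rw)\subseteq L_\alpha+\mov(w)$. For any point $x$, setting $p=w(x)$ we have $rw(x)-x=(r(p)-p)+(w(x)-x)$, where the first summand lies in $\mov(r)=L_\alpha$ (Definition~\ref{def:reflections}) and the second in $\mov(w)$; letting $x$ vary gives the inclusion. Since $\mov(w)=U+\mu$, the right-hand side equals $\mov(w)$ when $\alpha\in U$ and equals $U_\alpha+\mu$ when $\alpha\notin U$. Applying the same inclusion to the pair $(r,rw)$ and using $r(rw)=w$ yields the reverse inclusion $\mov(w)\subseteq L_\alpha+\mov(rw)$. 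Together these force $\bigl|\dim\mov(w)-\dim\mov(rw)\bigr|\le 1$, and since Lemma~\ref{lem:move-parity} says the two dimensions have opposite parity, they differ by exactly one.

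If $\alpha\in U$ this already finishes the proof: $\mov(rw)$ is an affine subspace (Proposition~\ref{prop:inv-aff}) contained in $\mov(w)$ of one smaller dimension, hence a codimension-one subspace of it. The real content is the case $\alpha\notin U$, where I must rule out the dimension dropping, i.e. prove $\mov(w)\subseteq\mov(rw)$. Fix $\lambda\in\mov(w)$ and let $A_\lambda\subseteq E$ be the affine subspace of points moved by $\lambda$ under $w$ (Proposition~\ref{prop:inv-aff}); it is nonempty since $\lambda\in\mov(w)$. By definition $w(z)=z+\lambda$ for every $z\in A_\lambda$, so $w(A_\lambda)=A_\lambda+\lambda$ is a translate of $A_\lambda$; moreover, since the linear part of $w$ fixes $\dir(\ms(w))=U^\perp$ pointwise (trivially in the elliptic case, and via the standard splitting in the hyperbolic case, in both cases using Lemma~\ref{lem:inv-orth}), one checks that $\dir(A_\lambda)\supseteq U^\perp$. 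As $\alpha\notin U=(U^\perp)^\perp$, the direction space of $w(A_\lambda)$ contains a vector not orthogonal to $\alpha$, so this affine subspace must meet the mirror $H$ of $r$, say at a point $p$. Then $x:=w^{-1}(p)\in A_\lambda$ and $w(x)=p$ is fixed by $r$, so $rw(x)-x=p-x=w(x)-x=\lambda$, whence $\lambda\in\mov(rw)$. This proves $\mov(w)\subseteq\mov(rw)\subseteq U_\alpha+\mu$; since the outer terms differ in dimension by one, parity again excludes $\mov(rw)=\mov(w)$ and forces $\mov(rw)=U_\alpha+\mu$, which contains $\mov(w)=U+\mu$ with codimension one.

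The step I expect to be the main obstacle is exactly this last case. The two one-step inclusions together with parity pin down $\dim\mov(rw)$ only up to the sign of the change, and separating ``dimension up'' from ``dimension down'' genuinely requires the geometric observation that when $\alpha\notin U$ a point experiencing the motion $\lambda$ under $w$ retains a full $U^\perp$ of freedom, which is enough to slide it onto the mirror of $r$. The routine facts I am not spelling out are the coset description of $A_\lambda$ and $w(A_\lambda)$ in terms of the fixed space of the linear part $Q$ of $w$, the identification of that fixed space with $U^\perp$, and the standard incidence fact that an affine subspace meets a given hyperplane whenever its direction space is not contained in that hyperplane's direction space.
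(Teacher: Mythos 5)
Your proof is correct, and its skeleton --- trapping $\mov(rw)$ inside $U_\alpha+\mu$ and letting Lemma~\ref{lem:move-parity} decide the dimension --- matches the paper's, but the way you obtain the complementary bound is genuinely different. The paper gets both lower bounds at once from a single observation: for each $\lambda\in\mov(w)$, choosing $x$ with $w(x)=x+\lambda$ gives $rw(x)-x\in\lambda+L_\alpha$, so $\mov(rw)$ meets every $L_\alpha$-coset of $U_\alpha+\mu$ and therefore has dimension at least $\dim U_\alpha-1$; combined with the containment and parity this settles the two cases uniformly, with no need to distinguish ``dimension up'' from ``dimension down'' in advance. You instead use the symmetric inclusion $\mov(w)\subseteq L_\alpha+\mov(rw)$ (an equivalent route to $\lvert\dim\mov(w)-\dim\mov(rw)\rvert\le 1$), which suffices for $\alpha\in U$, and then for $\alpha\notin U$ you prove the containment $\mov(w)\subseteq\mov(rw)$ outright: the fibre $A_\lambda$ is saturated by $U^\perp$ directions (via Lemma~\ref{lem:inv-orth} and the standard splitting), so $w(A_\lambda)$ is not parallel to the mirror $H$ and must meet it, producing a point moved by $\lambda$ under $rw$. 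Every step of this checks out, and there is no circularity, since Lemma~\ref{lem:move-parity} rests only on Lemma~\ref{lem:parity} and Theorem~\ref{thm:lr}, neither of which depends on the present proposition. The trade-off: your argument imports extra machinery (orthogonal invariants and the splitting) that the paper's coset count avoids, but in exchange it exhibits explicit witnesses showing how each $\lambda\in\mov(w)$ is realized as a motion of $rw$, rather than deducing $\mov(rw)=U_\alpha+\mu$ purely from a dimension count.
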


\begin{proof}
  Let $L_\alpha$ be the line spanned by $\alpha$ and let $k$ be the
  dimension of $U$.  Because $r$ only moves points in the $\alpha$
  direction, the move-set of $rw$ is contained in $U_\alpha+\mu$ and
  it must contain at least one point from each of its $L_\alpha$
  cosets.  For $\alpha \not \in U$, this implies that the dimension of
  $\mov(rw)$ is either $k$ or $k+1$.  By Lemma~\ref{lem:move-parity}
  its dimension is $k+1$ and we have $\mov(rw) = U_\alpha +\mu$.  On
  the other hand, for $\alpha \in U$, we have $U_\alpha = U$ and the
  dimension of $\mov(rw)$ is either $k$ or $k-1$.  By
  Lemma~\ref{lem:move-parity} its dimension is $k-1$ and $\mov(rw)$ is
  a codimension~$1$ subspace of $\mov(w)$.
\end{proof}

Proposition~\ref{prop:move-refl} makes it possible to determine how
type and reflection length change when multiplying by a reflection.

\begin{prop}[Hyperbolic isometries and reflections]\label{prop:hyp-refl}
  Let $r$ with reflection with hyperplane $H$ and roots $\pm \alpha$,
  let $w$ be a hyperbolic isometry with $\lr(w)=k$ and $\mov(w) =
  U+\mu$ in standard form, and let $U_\alpha = \spn(U \cup \{\alpha\})$.
  \begin{itemize}
  \item If $\alpha \in U$ then $rw$ is hyperbolic and $\lr(rw) = k-1$.
  \item If $\alpha \not \in U$ and $\mu \in U_\alpha$ then $rw$ is
    elliptic and $\lr(rw) = k-1$.
  \item If $\alpha \not \in U$ and $\mu \not \in U_\alpha$ then $rw$
    is hyperbolic and $\lr(rw) = k+1$.
  \end{itemize}
\end{prop}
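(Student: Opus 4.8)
The plan is to reduce everything to two inputs: the dimension count for $\mov(rw)$ supplied by Proposition~\ref{prop:move-refl}, and the elementary fact that an affine subspace of $V$ is the move-set of an \emph{elliptic} isometry exactly when it contains the origin (this is the content of the elliptic/hyperbolic dichotomy: $rw$ is elliptic iff $\mov(rw)$ is a linear subspace). Once we know both $\dim \mov(rw)$ and whether $rw$ is elliptic or hyperbolic, Theorem~\ref{thm:lr} hands us $\lr(rw)$ directly.

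First I would record the starting data. Since $w$ is hyperbolic with $\lr(w)=k$, Theorem~\ref{thm:lr} gives $\dim\mov(w)=k-2$, so $\dim U = k-2$; and since $U+\mu$ is in standard form with $w$ hyperbolic, $\mu$ is a nonzero vector of $U^\perp$, whence $0\notin U+\mu$ (an equation $0=\lambda+\mu$ with $\lambda\in U$ would put $\mu=-\lambda$ in $U\cap U^\perp=\{\zer\}$). Now I split into the three cases. If $\alpha\in U$, Proposition~\ref{prop:move-refl} makes $\mov(rw)$ a codimension-one affine subspace of $\mov(w)$, so $\dim\mov(rw)=k-3$ and $\mov(rw)\subseteq U+\mu$ still misses the origin; hence $rw$ is hyperbolic and $\lr(rw)=(k-3)+2=k-1$. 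If $\alpha\notin U$, Proposition~\ref{prop:move-refl} gives $\mov(rw)=U_\alpha+\mu$ with $\dim U_\alpha = \dim U + 1 = k-1$. When moreover $\mu\in U_\alpha$, this set equals the linear subspace $U_\alpha$, so $rw$ is elliptic and $\lr(rw)=\dim U_\alpha = k-1$. When instead $\mu\notin U_\alpha$, the same origin-avoidance argument as before shows $0\notin U_\alpha+\mu$, so $rw$ is hyperbolic and $\lr(rw)=(k-1)+2=k+1$.

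There is no real obstacle here: all the geometric work has already been done in Proposition~\ref{prop:move-refl} (which rests on the parity Lemma~\ref{lem:move-parity}). The only point requiring a little care is the bookkeeping — consistently phrasing ``$rw$ is elliptic'' as ``$\zer\in\mov(rw)$'' rather than fussing with the standard form of $\mov(rw)$, and verifying in the two hyperbolic cases that $\mov(rw)$ genuinely avoids the origin. I expect the write-up to be a short case analysis of three or four lines per case.
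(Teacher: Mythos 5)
Your proposal is correct and follows essentially the same route as the paper: apply Proposition~\ref{prop:move-refl} to identify $\mov(rw)$, decide elliptic versus hyperbolic by whether that move-set contains the origin, and then read off $\lr(rw)$ from Theorem~\ref{thm:lr}. Your write-up just makes explicit the dimension bookkeeping and the origin-avoidance checks that the paper leaves implicit.
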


\begin{proof}
  When $\alpha$ is in $U$, by Proposition~\ref{prop:move-refl}
  $\mov(rw)$ is a subspace of $\mov(w)$ and since $\mov(w)$ does not
  contain the origin, neither does $\mov(rw)$.  Thus $rw$ is
  hyperbolic.  Similarly, when $\alpha$ is in $U$ the new move-set is
  $U_\alpha +\mu$ which contains the origin iff $\mu \in U_\alpha$ and
  this determines whether $rw$ is elliptic or hyperbolic.  In all
  three cases $\lr(rw)$ is determined by Theorem~\ref{thm:lr}.
\end{proof}

The elliptic analog of Proposition~\ref{prop:hyp-refl} requires more
preparation.

\begin{lem}[Minimal elliptic factorizations]\label{lem:min-ell}
  Let $w =r_1 r_2 \cdots r_k$ be a product of reflections where $r_i$
  has hyperplane $H_i$.  If $w$ is elliptic and $\lr(w)=k$ then the
  roots of these reflections are linearly independent.  Conversely, if
  the roots of these reflections are linearly independent then $w$ is
  elliptic, $\lr(w)=k$, $\fix(w) = H_1 \cap \cdots \cap H_k$ and this
  is one of its minimum length reflection factorizations.
\end{lem}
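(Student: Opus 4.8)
The plan is to read everything off Theorem~\ref{thm:lr} and Proposition~\ref{prop:lower}, with the dimension count in Lemma~\ref{lem:inv-comp} handling the fix-set. The forward implication is immediate. If $w$ is elliptic and $\lr(w)=k$, then Theorem~\ref{thm:lr} gives $\dim(\mov(w)) = \lr(w) = k$, and since $w = r_1\cdots r_k$ exhibits $w$ as a product of $k$ reflections, the equality clause of Proposition~\ref{prop:lower} forces the roots $\pm\alpha_i$ of the $r_i$ to be linearly independent.

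For the converse, assume the roots $\alpha_1,\dots,\alpha_k$ are linearly independent. Proposition~\ref{prop:lower} then delivers two facts at once: $\dim(\mov(w)) = k$, and $w$ is elliptic (its proof notes that hyperplanes with linearly independent normal vectors share a common point, which is then fixed by $w$). Feeding the elliptic isometry $w$ into Theorem~\ref{thm:lr} gives $\lr(w) = \dim(\mov(w)) = k$, so the given factorization $w = r_1\cdots r_k$ has length equal to $\lr(w)$ and is therefore one of the minimum length reflection factorizations of $w$.

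It remains to identify the fix-set. Write $B = H_1\cap\cdots\cap H_k$. Every point of $B$ is fixed by each $r_i$ and hence by $w$, so $B\subseteq\fix(w)$, and $B$ is nonempty because hyperplanes with linearly independent normal vectors always meet, as was used above. Both $B$ and $\fix(w)$ are affine subspaces — the latter because it coincides with $\ms(w)$ for an elliptic isometry, which is affine by Proposition~\ref{prop:inv-aff} — so it is enough to check that their dimensions agree. Lemma~\ref{lem:inv-comp} gives $\dim\fix(w) = n - \dim\mov(w) = n-k$. On the other side, since $B$ is a nonempty intersection of affine subspaces one has $\dir(B) = \dir(H_1)\cap\cdots\cap\dir(H_k) = H_{\alpha_1}\cap\cdots\cap H_{\alpha_k}$, and linear independence of the $\alpha_i$ makes this intersection of linear hyperplanes the orthogonal complement of $\spn(\alpha_1,\dots,\alpha_k)$, of dimension $n-k$. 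Hence $\dim B = n-k = \dim\fix(w)$, and two nested affine subspaces of the same dimension are equal, so $\fix(w) = H_1\cap\cdots\cap H_k$.

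There is no real obstacle here; the only step meriting attention is the identity $\dir(H_1\cap\cdots\cap H_k) = \bigcap_i\dir(H_i)$, where $\subseteq$ is automatic and $\supseteq$ uses nonemptiness of $B$ (adding a vector common to all the $\dir(H_i)$ to a point of $B$ returns a point of every $H_i$). A more computational alternative would induct on $k$, stripping off $r_1$ and using Proposition~\ref{prop:move-refl} to track how $\mov$ expands and $\fix$ contracts at each step, but the dimension-counting argument above is cleaner and sidesteps the case analysis in that proposition.
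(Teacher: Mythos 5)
Your forward direction and your identification of $\fix(w)$ run parallel to the paper's argument, but the crucial step of your converse has a gap. You obtain $\dim(\mov(w))=k$ by citing the ``iff'' clause of Proposition~\ref{prop:lower}. As a statement that clause does assert what you need, but the paper's proof of Proposition~\ref{prop:lower} only establishes the containment $\mov(w)\subseteq\spn\{\alpha_1,\ldots,\alpha_k\}$, which gives the bound $\dim(\mov(w))\leq k$ and the implication ``$\dim(\mov(w))=k$ implies the roots are independent''; the reverse implication --- exactly the one you invoke --- is never proved there, and it is precisely the nontrivial content that this part of Lemma~\ref{lem:min-ell} is meant to supply. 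Independence alone only gives $B=H_1\cap\cdots\cap H_k\subseteq\fix(w)$, hence $\dim\fix(w)\geq n-k$ and again $\dim\mov(w)\leq k$; the equality is the whole point at issue, so as written your converse is circular in effect, and your subsequent appeals to Theorem~\ref{thm:lr} and Lemma~\ref{lem:inv-comp} all hang on this unproven clause.

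The paper closes exactly this gap by a different device: it multiplies the reflections in one at a time and applies Proposition~\ref{prop:move-refl}, so that since each new root lies outside the span of the move-set built so far, the move-sets of the partial products grow by one dimension at each step and $\dim\mov(w)=k$ follows (this route ultimately uses the parity Lemma~\ref{lem:move-parity}). You should either insert that induction, or argue directly that $\fix(w)\subseteq B$: if $w(x)=x$ then $r_2\cdots r_k(x)=r_1(x)$, so the vector from $x$ to $r_1(x)$, which lies in $L_{\alpha_1}$, equals the cumulative motion of $x$ under $r_2\cdots r_k$, which lies in $\spn(\alpha_2,\ldots,\alpha_k)$; linear independence forces both to vanish, so $x\in H_1$ and $r_2\cdots r_k(x)=x$, and induction gives $x\in B$. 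With $\fix(w)=B$ in hand, your dimension count, the conclusion $\lr(w)=k$, and the minimality of the given factorization all go through as you wrote them.
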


\begin{proof}
  The factorization shows $\lr(w) \leq k$.  If $w$ is elliptic and
  $\lr(w) = k$ then by Theorem~\ref{thm:lr} its move-set is
  $k$-dimensional and by Proposition~\ref{prop:lower} the roots of the
  reflections are linearly independent.  Conversely, if the roots are
  linearly independent, then their hyperplanes intersect in a
  codimension~$k$ subspace $B = H_1 \cap \cdots \cap H_k$ that is
  fixed by $w$.  Thus $w$ is elliptic.  Moreover, if we start at the
  identity and multply the reflections one at a time in order, then
  the linear independence of the roots and
  Proposition~\ref{prop:move-refl} implies that the move-set of these
  partial products steadily increase.  Thus $\mov(w)$ has dimension
  equal to $k$ and by Lemma~\ref{lem:inv-comp} $\fix(w)$ has
  codimension~$k$.  Since we have already found a subspace fixed by
  $w$ of this dimension, $\fix(w) = B$, $\lr(w) =k$ by
  Theorem~\ref{thm:lr}, and this factorization has minimal length.
\end{proof}

\begin{lem}[Fix-sets and reflections]\label{lem:fix-sets}
  If $w$ and $w'$ are elliptic isometries and $r$ is a reflection such
  that $w' = rw$ then the hyperplane of $r$ intersects both $\fix(w)$
  and $\fix(w')$.
\end{lem}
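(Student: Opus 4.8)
The plan is to exploit the symmetry of the hypotheses together with the relationship between fix-sets and move-sets established earlier. Since $w' = rw$ is equivalent to $w = r w'$ (because $r^2 = \id$), the reflection $r$ plays a symmetric role with respect to $w$ and $w'$, so it suffices to prove that $H = H_\alpha$, the hyperplane of $r$, intersects $\fix(w)$; the same argument applied to the pair $(w', w)$ then gives the intersection with $\fix(w')$. First I would reduce to the elliptic setting: both $w$ and $w'$ are elliptic by hypothesis, so $\fix(w) = \ms(w)$ and $\fix(w') = \ms(w')$, and $\mov(w)$, $\mov(w')$ are linear subspaces of $V$. Write $\mov(w) = U$, a linear subspace; then by Lemma~\ref{lem:inv-orth} we have the orthogonal decomposition $V = U \oplus \dir(\fix(w))$, so $\dir(\fix(w)) = U^\perp$.

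The key step is to locate $\alpha$ relative to $U$. Since $w$ is elliptic and $w' = rw$ is also elliptic, Proposition~\ref{prop:move-refl} applies but we must rule out the case $\alpha \notin U$: if $\alpha \notin U$ then $\mov(rw) = \spn(U \cup \{\alpha\}) + \mu$ with $\mu$ the (here trivial, since $w$ is elliptic) offset — wait, more carefully, for $w$ elliptic $\mu$ is trivial, so $\mov(w) = U$ and if $\alpha \notin U$ then $\mov(rw) = U_\alpha = \spn(U \cup \{\alpha\})$, which is still linear, hence $rw$ elliptic; that case is not excluded. So both cases $\alpha \in U$ and $\alpha \notin U$ can genuinely occur, and I need an argument handling both. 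The cleanest route: choose a basepoint in $\fix(w)$ to identify $E$ with $V$, so that $w$ becomes a linear transformation and $\fix(w)$ becomes the linear subspace $U^\perp$. Then $r$, as an affine reflection with hyperplane $H$, has $w' = rw$ elliptic meaning $H \cap \fix(w') \neq \emptyset$ automatically — no, that is what we want to prove. Instead I would argue directly: by Proposition~\ref{prop:motion-refl} reasoning, or more simply, since $rw$ fixes some point $p$ (it is elliptic), we have $w(p) = r(p)$; but $r(p)$ is the reflection of $p$ across $H$, so the vector from $p$ to $w(p)$ is in $\mov(w) = U$ and is also perpendicular to $H$ — i.e. parallel to $\alpha$ — and its midpoint lies on $H$. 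Thus the midpoint $m$ of $p$ and $w(p)$ lies on $H$. Now $p$ and $w(p)$ are related by an element of $U$, and I want to show $m$ also lies on $\fix(w)$, or at least that $H$ meets $\fix(w)$.

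Here is the decisive computation: with the basepoint chosen in $\fix(w) = U^\perp$, write $p = p_U + p_\perp$ with $p_U \in U$, $p_\perp \in U^\perp$. Since $w$ acts as a linear map fixing $U^\perp$ pointwise and preserving the decomposition (by Lemma~\ref{lem:inv-orth}, $w$ restricts to an orthogonal map on $U$ with no nonzero fixed vector), we get $w(p) = w(p_U) + p_\perp$, so the midpoint is $m = \tfrac12(p_U + w(p_U)) + p_\perp$. This midpoint lies on $H$. I claim the point $q = p_\perp \in \fix(w)$ also lies on $H$: indeed $q - m = p_\perp - m = -\tfrac12(p_U + w(p_U)) \in U$, which is perpendicular to $H$ only in its $\alpha$-component — hmm, that shows $q - m$ need not be parallel to $\alpha$. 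So $q$ need not be on $H$. The main obstacle, then, is precisely this: knowing the midpoint $m$ is on $H$ does not immediately place a point of $\fix(w)$ on $H$, because $\fix(w)$ is orthogonal to all of $U$, not just to $\alpha$. I expect to resolve this by instead projecting $H$ itself: the hyperplane $H$ has direction space $H_\alpha = \alpha^\perp \supseteq U^\perp$ (since $\alpha \in U$ forces $U^\perp \subseteq \alpha^\perp$; and if $\alpha \notin U$ one argues $U \cap \alpha^\perp$ has codimension one in $U$, still leaving room). Combined with $m \in H$, the affine subspace $m + U^\perp$ lies entirely in $H$ when $U^\perp \subseteq \dir(H)$, and this affine subspace meets $\fix(w) = q + U^\perp$... only if $m - q \in U^\perp$, which as computed lies in $U$ — contradiction unless that vector is zero. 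So the genuinely hard part is the case $\alpha \notin U$, where $\dir(H) = \alpha^\perp$ does not contain all of $U^\perp$; I would handle it by noting that in that case $\mov(rw) = U_\alpha$ strictly contains $U$, and apply Lemma~\ref{lem:inv-orth} to $w'$ to get $\dir(\fix(w')) = U_\alpha^\perp \subseteq U^\perp = \dir(\fix(w))$, then use the already-established nonempty intersection $H \cap \fix(w')$ together with $\dir(\fix(w')) \subseteq \dir(\fix(w))$ and a dimension/parallel-transport argument to push a point of $\fix(w') \cap H$ over to a point of $\fix(w) \cap H$. I anticipate this last transport step — showing the common point of $H$ and $\fix(w')$ can be slid along $\fix(w)$-directions while staying in $H$ — to be where the real care is needed, and it will hinge on $\dir(\fix(w')) \subseteq \dir(H) \cap \dir(\fix(w))$, which the orthogonality lemmas should deliver.
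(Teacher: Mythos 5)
There is a genuine gap, and in fact the case you label as hard is the easy one and vice versa. If $\alpha\notin U=\mov(w)$, the conclusion for $\fix(w)$ is automatic: $\dir(\fix(w))=U^\perp$ (Lemma~\ref{lem:inv-orth}) is then not contained in $\dir(H)=\alpha^\perp$, so the affine hyperplane $H$ cannot be disjoint from $\fix(w)$; alternatively, your own observation that a fixed point $p$ of $w'$ satisfies $w(p)=r(p)$ finishes it, since the vector from $p$ to $w(p)$ lies in $U$ and is a multiple of $\alpha$, hence is zero, giving $p\in H\cap\fix(w)\cap\fix(w')$. The substantive case is $\alpha\in U$, and there your argument does not close: you correctly produce the midpoint $m\in H$ of $p$ and $w(p)=r(p)$, but, as you concede, $m$ (and the slice $m+U^\perp\subset H$) need not meet $\fix(w)$ at all --- think of $w$ a rotation of the plane about $x_0$ and $p$ a fixed point of the reflection $w'=rw$: the midpoint of $p$ and $r(p)$ is a generic point of $H$, not $x_0$. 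No alternative argument is supplied for this case. Worse, your proposed repair invokes ``the already-established nonempty intersection $H\cap\fix(w')$,'' which has not been established; it is the other half of the lemma, and under your symmetry reduction it is exactly the $\alpha\in\mov(w')$ instance of the same unproved claim (note $\alpha\in\mov(w')=\spn(U\cup\{\alpha\})$ always holds there), so the appeal is circular. The final ``dimension/parallel-transport'' step is also never made precise.

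For comparison, the paper avoids all of this by working with factorizations rather than with the geometry of a single fixed point: by Lemma~\ref{lem:parity} one may assume $\lr(w')=\lr(w)+1$; appending $r$ to a minimal factorization of $w$ gives a minimal factorization of $w'$, and Lemma~\ref{lem:min-ell} identifies $\fix(w')=H\cap H_2\cap\cdots\cap H_k$ and $\fix(w)=H_2\cap\cdots\cap H_k$, so the nonempty set $\fix(w')$ lies inside both $H$ and $\fix(w)$. If you want to salvage a direct geometric proof, the missing ingredient is precisely a proof (not using Proposition~\ref{prop:ell-refl}, which the paper derives from this lemma) that when $\alpha\in U$ and $\fix(w)\cap H=\emptyset$ the product $rw$ acquires a translational part; some form of the factorization or parity machinery seems unavoidable there.
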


\begin{proof}
  By Lemma~\ref{lem:parity} $\lr(w') = \lr(w) \pm 1$ and by relabeling
  if necessary we can assume that $\lr(w') = \lr(w) + 1$.  In
  particular, if we set $r_1 = r$ and $w = r_2 \cdots r_k$ is a
  minimal length reflection factorization of $w$ then $w' = rw = r_1
  r_2 \cdots r_k$ is a minimal length reflection factorization of
  $w'$.  By Lemma~\ref{lem:min-ell} $\fix(w') = H_1 \cap \cdots \cap
  H_k$ and $\fix(w) = H_2 \cap \cdots H_k$ where $H_i$ is the
  hyperplane of $r_i$.  This means that $H = H_1$ contains $\fix(w')$
  and, since $\fix(w')$ is nonempty, it intersects $\fix(w)$.
\end{proof}

\begin{prop}[Elliptic isometries and reflections]\label{prop:ell-refl}
  Let $r$ be a reflection with hyperplane $H$ and roots $\pm \alpha$
  and let $w$ be an elliptic isometry with $\lr(w)=k$, $\mov(w)=U$ and
  $\fix(w)=B$.
  \begin{itemize}
  \item If $\alpha \not \in U$ then $rw$ is elliptic and $\lr(rw) = k+1$.
  \item If $\alpha \in U$ and $B \subset H$ then $rw$ is elliptic and
    $\lr(rw) = k-1$.
  \item If $\alpha \in U$ with $B \not \subset H$, then $B$ and $H$
    are disjoint, $rw$ is hyperbolic and $\lr(rw) = k+1$.
  \end{itemize}
\end{prop}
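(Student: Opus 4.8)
The three cases are governed by whether the root $\alpha$ lies in $\mov(w) = U$, and (when it does) whether $\fix(w) = B$ is contained in the hyperplane $H$ of $r$. I would dispatch the reflection-length claims using Proposition~\ref{prop:move-refl} together with Theorem~\ref{thm:lr}, and reserve the geometric work for identifying when $rw$ is elliptic versus hyperbolic and for the disjointness statement in the third bullet.

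\textbf{Case $\alpha \notin U$.} Here Proposition~\ref{prop:move-refl} gives $\mov(rw) = U_\alpha$, which is a linear subspace (since $U$ is linear and $\mu$ is trivial, as $w$ is elliptic). A move-set equal to a linear subspace means $rw$ is elliptic, and $\dim\mov(rw) = k+1$, so $\lr(rw) = k+1$ by Theorem~\ref{thm:lr}. This case needs no geometry beyond citing the earlier results.

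\textbf{Case $\alpha \in U$, $B \subseteq H$.} Now Proposition~\ref{prop:move-refl} says $\mov(rw)$ is a codimension-one subspace of $U$, in particular a linear subspace, so $rw$ is again elliptic, with $\dim\mov(rw) = k-1$ and hence $\lr(rw) = k-1$. The hypothesis $B \subseteq H$ is exactly what keeps things elliptic here; I would double-check consistency by noting that $rw$ elliptic forces $\fix(rw) \neq \varnothing$, and $\fix(r) \cap \fix(w) = H \cap B = B \neq \varnothing$ confirms this — indeed this is the same mechanism as in Lemma~\ref{lem:fix-refl}.

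\textbf{Case $\alpha \in U$, $B \not\subseteq H$ — the hard part.} The reflection-length bookkeeping is again routine: $\mov(rw)$ is a codimension-one subspace of $U$ \emph{or} it is some affine subspace; by Lemma~\ref{lem:move-parity} it has dimension $k-1$ or $k+1$, and I must rule out $k-1$ and show $rw$ is hyperbolic, which is equivalent to showing $\fix(rw) = \varnothing$, i.e. that $H$ and $B$ are disjoint. This is the genuine content. The key tool is Lemma~\ref{lem:fix-sets}: if $rw$ were elliptic, then since $rw$ and $w$ are both elliptic and differ by the reflection $r$, the hyperplane $H$ would have to intersect $\fix(w) = B$. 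So I would argue the contrapositive — assume $H$ and $B$ meet in a point and derive $\alpha \notin U$, contradicting our case hypothesis. Concretely: if $H \cap B$ contains a point $y$, then $y$ is fixed by both $r$ and $w$, so fixed by $rw$, making $rw$ elliptic; then by Lemma~\ref{lem:fix-refl} applied with the roles arranged appropriately, and by Lemma~\ref{lem:inv-orth}, the direction $\alpha$ (the root of $r$) would have to be orthogonal to... — more cleanly, I would instead pick a basepoint at $y \in H \cap B$ to identify $E$ with $V$, so that $r$, $w$, and $rw$ all become linear, $\mov(w) = U = \fix(w)^\perp$, and $\alpha \in \mov(r) \subseteq \mov(rw)$; since $rw$ fixes $y$ it is elliptic with linear move-set, and comparing dimensions via Lemma~\ref{lem:move-parity} shows $\mov(rw)$ has dimension $k+1$, not $k-1$, whence by Proposition~\ref{prop:move-refl} we are in the "$\alpha \notin U$" alternative — contradiction. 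Therefore $H \cap B = \varnothing$, $rw$ has no fixed point, $rw$ is hyperbolic, and $\dim\mov(rw) = k+1$ gives $\lr(rw) = k+1$. The main obstacle is getting the logical direction of the Lemma~\ref{lem:fix-sets} argument exactly right: that lemma gives a necessary condition for $rw$ elliptic, and I am using its contrapositive, so I should state carefully that $B \not\subseteq H$ together with $\alpha \in U$ forces $B \cap H = \varnothing$ rather than merely $B \not\subseteq H$.
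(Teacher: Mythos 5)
Your first bullet is handled exactly as in the paper and is fine. In the second bullet, however, your primary justification is invalid: Proposition~\ref{prop:move-refl} only says that $\mov(rw)$ is a codimension-one \emph{affine} subspace of $\mov(w)=U$, and such a subspace of a linear space need not be linear --- indeed the third bullet (where also $\alpha\in U$) is precisely the case in which $\mov(rw)$ is a nonlinear affine subspace of $U$, so your inference ``codimension-one in $U$, hence linear, hence elliptic'' would wrongly prove ellipticity there as well. What you relegate to a ``consistency check'' is in fact the proof, and it is the paper's argument: $B\subset H$ means $B$ is fixed by both $r$ and $w$, hence by $rw$, so $rw$ is elliptic; then Proposition~\ref{prop:move-refl} gives $\dim\mov(rw)=k-1$ and Theorem~\ref{thm:lr} gives $\lr(rw)=k-1$.

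The genuine gap is in the third bullet. Your proposed contradiction assumes $\alpha\in U$ and a point $y\in B\cap H$ and never uses $B\not\subset H$; but those two assumptions alone are perfectly consistent (any rotation $w$ about $B$ with $B\subset H$ realizes them), so no valid contradiction can be extracted from them. Concretely, the step $\mov(r)\subseteq\mov(rw)$ is unjustified and false in general (for a planar rotation $w$ fixing a point of $H$, $rw$ is a reflection whose root is typically not $\pm\alpha$), and Lemma~\ref{lem:move-parity} only controls parity, so it cannot force $\dim\mov(rw)=k+1$; on the contrary, Proposition~\ref{prop:move-refl} forces $\dim\mov(rw)=k-1$ because $\alpha\in U$. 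Your final bookkeeping is also off: you feed $\dim\mov(rw)=k+1$ into the elliptic formula, whereas for hyperbolic $rw$ Theorem~\ref{thm:lr} adds $2$, so the correct count is $\dim\mov(rw)=k-1$ and $\lr(rw)=(k-1)+2=k+1$. The missing idea is the directional containment from Lemma~\ref{lem:inv-orth}: since $\alpha\in U$ we have $\dir(B)=U^\perp\subseteq\alpha^\perp=\dir(H)$, so if $B$ met $H$ in even one point then $B\subset H$, contrary to hypothesis; hence $B\cap H=\varnothing$, the contrapositive of Lemma~\ref{lem:fix-sets} (which you did identify as the key tool) shows $rw$ is not elliptic, and the length then follows as above.
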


\begin{proof}
  For $\alpha \not \in U$, $rw$ is elliptic because $\mov(rw) = \spn(U
  \cup \{\alpha\})$ by Proposition~\ref{prop:move-refl} and this
  subspace contains the origin.  When $\alpha \in U$ and $B \subset
  H$, $rw$ is elliptic because $B$ is fixed by $rw$.  Finally, suppose
  $\alpha \in U$ and $B \not \subset H$.  By Lemma~\ref{lem:inv-orth},
  $\dir(B) \subset \dir(H)$ so the only way $B$ is not a subset of $H$
  is if it is completely disjoint from $H$.  The isometry $rw$ is
  hyperbolic since by Lemma~\ref{lem:fix-sets} it is not elliptic.  In
  all three cases $\lr(rw)$ is determined by Theorem~\ref{thm:lr}.
\end{proof}

The only situations where the length goes down are the following.

\begin{prop}[Hyperbolic descents]\label{prop:hyp-desc}
  Let $w$ be a hyperbolic isometry and let $r$ be a reflection below
  $w$.  If $rw$ is hyperbolic then $\mov(rw)$ is a codimension~$1$
  subspace of $\mov(w)$, and if $rw$ is elliptic, then
  $\dir(\fix(rw))^\perp = \spn(\mov(w))$.
\end{prop}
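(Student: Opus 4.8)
The plan is to reduce everything to the results already established about hyperbolic isometries and reflections, namely Proposition~\ref{prop:hyp-refl} and Proposition~\ref{prop:move-refl}, together with the fact that a reflection below $w$ decreases reflection length. Let $w$ be hyperbolic with $\lr(w)=k$ and $\mov(w)=U+\mu$ in standard form, and let $r$ be a reflection below $w$ with roots $\pm\alpha$. Since $r$ is below $w$, by definition $\lr(rw) < \lr(w)$, so by Lemma~\ref{lem:parity} we have $\lr(rw) = k-1$. Now run through the three cases of Proposition~\ref{prop:hyp-refl}. In the third case ($\alpha\notin U$ and $\mu\notin U_\alpha$) we would get $\lr(rw)=k+1$, contradicting that $r$ is below $w$; so this case cannot occur. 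Hence either $\alpha\in U$ (and $rw$ is hyperbolic with $\lr(rw)=k-1$) or $\alpha\notin U$ with $\mu\in U_\alpha$ (and $rw$ is elliptic with $\lr(rw)=k-1$). This already matches the dichotomy in the statement, so it remains to identify the move-set (resp.\ fix-set) in each case.

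For the hyperbolic case, $\alpha\in U$, and Proposition~\ref{prop:move-refl} directly gives that $\mov(rw)$ is a codimension~$1$ subspace of $\mov(w)$, which is exactly the first claim. For the elliptic case, $\alpha\notin U$ and $\mu\in U_\alpha$, so Proposition~\ref{prop:move-refl} gives $\mov(rw) = U_\alpha+\mu$; since $\mu\in U_\alpha$ this equals the linear subspace $U_\alpha$, confirming (as it must) that $rw$ is elliptic with $\mov(rw)=U_\alpha=\spn(U\cup\{\alpha\})$. Since $\alpha\notin U$ and $U=\dir(\mov(w))$, the span $\spn(\mov(w))$ of the affine set $\mov(w)=U+\mu$ is the affine hull, which has dimension $k$; and $U_\alpha$ also has dimension $k$ while containing $\spn(\mov(w))$ (because $U_\alpha\supseteq U$ and $\mu\in U_\alpha$ forces all of $U+\mu$, hence its span, into $U_\alpha$). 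Therefore $\mov(rw)=U_\alpha=\spn(\mov(w))$. Finally, by Lemma~\ref{lem:inv-orth} applied to the elliptic isometry $rw$, we have $\dir(\mov(rw))\oplus\dir(\fix(rw)) = V$ orthogonally, and since $rw$ is elliptic $\dir(\mov(rw))=\mov(rw)$; thus $\dir(\fix(rw))^\perp = \mov(rw) = \spn(\mov(w))$, which is the second claim.

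The only genuine subtlety — the step I expect to be the main obstacle — is checking that $r$ being below $w$ really does exclude the third case of Proposition~\ref{prop:hyp-refl} and pins down $\lr(rw)=k-1$ rather than merely $\lr(rw)\le k$; but this is immediate from the definition of ``reflection below $w$'' combined with the parity Lemma~\ref{lem:parity}. The remaining identifications ($\spn(\mov(w)) = U_\alpha$ when $\mu\in U_\alpha$, and the orthogonal complement computation via Lemma~\ref{lem:inv-orth}) are routine dimension bookkeeping once one is careful that $\spn$ of an affine subspace means its linear span through the origin, which here has dimension equal to $\dim\mov(w)$ precisely because $w$ is hyperbolic and $\mu\neq\zer$.
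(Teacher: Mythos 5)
Your argument is correct and follows essentially the same route as the paper, whose proof simply cites Propositions~\ref{prop:move-refl} and~\ref{prop:hyp-refl} together with Lemma~\ref{lem:inv-orth}; your explicit case analysis and the exclusion of the length-increasing case via the definition of ``reflection below $w$'' just spell out what the paper leaves implicit. One bookkeeping slip worth fixing: with $\lr(w)=k$ the move-set $U+\mu$ has dimension $k-2$, so $\spn(\mov(w))$ and $U_\alpha$ both have dimension $k-1$ (not $k$, and not $\dim\mov(w)$ as your closing sentence asserts --- the span of a nonlinear affine subspace has dimension one more than the subspace); since your argument only uses that these two dimensions agree and that $U_\alpha\supseteq\spn(\mov(w))$, the conclusion $U_\alpha=\spn(\mov(w))$ stands.
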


\begin{proof}
  When $rw$ is hyperbolic this follows from
  Propositions~\ref{prop:move-refl} and~\ref{prop:hyp-refl}.  When
  $rw$ is elliptic, these same propositions imply that $\mov(rw) =
  \spn(\mov(w))$ and by Lemma~\ref{lem:inv-orth} $\dir(\fix(rw))$ is
  its orthogonal complement.
\end{proof}

\begin{prop}[Elliptic descents]\label{prop:ell-desc}
  If $w$ is a elliptic isometry and $r$ is a reflection below $w$,
  then $rw$ is elliptic and $\fix(rw)$ contains $\fix(w)$ as a
  codimension~$1$ subspace.
\end{prop}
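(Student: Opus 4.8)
The plan is to leverage the three-case trichotomy of Proposition~\ref{prop:ell-refl} together with the characterization of ``reflection below $w$'' in terms of a drop in reflection length. Since $w$ is elliptic with $\lr(w) = k = \dim(\mov(w))$, a reflection $r$ below $w$ is precisely one with $\lr(rw) < \lr(w)$, i.e. $\lr(rw) = k-1$ by the parity lemma (Lemma~\ref{lem:parity}). First I would run through the cases of Proposition~\ref{prop:ell-refl} applied to $w$ (with $\mov(w) = U$, $\fix(w) = B$, and $r$ having hyperplane $H$ and roots $\pm\alpha$): in the first case ($\alpha \notin U$) we get $\lr(rw) = k+1$, and in the third case ($\alpha \in U$, $B \not\subset H$) we again get $\lr(rw) = k+1$. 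Only the second case ($\alpha \in U$ and $B \subset H$) yields $\lr(rw) = k-1$. So $r$ being below $w$ forces us into that second case, and there Proposition~\ref{prop:ell-refl} already tells us $rw$ is elliptic.

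It remains to identify $\fix(rw)$ and show it contains $\fix(w) = B$ as a codimension~$1$ subspace. Since $\lr(rw) = k-1$, Theorem~\ref{thm:lr} gives $\dim(\mov(rw)) = k-1$, and by Lemma~\ref{lem:inv-comp} $\fix(rw)$ has codimension $k-1$ in $E$, hence dimension exactly one more than $\fix(w) = B$. Now I would argue that $B \subset \fix(rw)$: indeed, $B \subset H = \fix(r)$ (this is exactly the hypothesis of case two) and $B \subset \fix(w)$, so every point of $B$ is fixed by both $r$ and $w$, hence by $rw$. Thus $B$ is an affine subspace of $\fix(rw)$ of codimension~$1$ within it, which is the desired conclusion.

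The only point requiring a little care is confirming that we really are in case two and cannot be in some degenerate overlap of cases — but the three cases of Proposition~\ref{prop:ell-refl} are exhaustive and mutually exclusive (they are distinguished by whether $\alpha \in U$ and, if so, whether $B \subset H$), so this is immediate. The main ``obstacle,'' such as it is, is simply organizing the dimension bookkeeping: one needs $\mov(w)$ being $k$-dimensional (Theorem~\ref{thm:lr}), then $\mov(rw)$ being $(k-1)$-dimensional (again Theorem~\ref{thm:lr}, via the length drop), then translating both back through Lemma~\ref{lem:inv-comp} to get the codimensions of the fix-sets, and finally checking the containment $B \subset \fix(rw)$ by hand. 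Once the case analysis pins us to case two, everything else is routine.
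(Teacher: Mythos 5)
Your proposal is correct and follows essentially the same route as the paper, which simply cites Propositions~\ref{prop:move-refl} and~\ref{prop:ell-refl}: the length drop forces the second case of the trichotomy in Proposition~\ref{prop:ell-refl}, giving ellipticity, and the dimension count plus the containment $B \subset H$ gives the codimension-$1$ statement. Your filling-in of the dimension bookkeeping via Theorem~\ref{thm:lr} and Lemma~\ref{lem:inv-comp} is a valid and natural expansion of the paper's terse argument.
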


\begin{proof}
  This follows from Propositions~\ref{prop:move-refl}
  and~\ref{prop:ell-refl}.
\end{proof}

\section{Combinatorial Models}

In this section we construct an abstract poset $P$ whose elements are
indexed by affine subspaces of $V$ and $E$.  Its subposets are used to
establish Theorem~\ref{thm:model}, our main result.

\begin{defn}[Global poset]\label{def:global}
  We construct a \emph{global poset} $P$ from two types of elements.
  For each nonlinear affine subspace $M$ in $V$, $P$ contains a
  \emph{hyperbolic} element $h^M$ and for each affine subspace $B$ in
  $E$, $P$ contains an \emph{elliptic} element $e^B$.  We also define
  an \emph{invariant map} $\inv \colon W \onto P$ that sends $w$ to
  $h^{\mov(w)}$ when $w$ is hyperbolic and to $e^{\fix(w)}$ when $w$
  is elliptic.  This explains the names and the notation.  The
  elements of $P$ are ordered as follows.  First, hyperbolic elements
  are ordered by inclusion and elliptic elements by reverse inclusion:
  $h^M \leq h^{M'}$ iff $M \subset M'$ and $e^B \leq e^{B'}$ iff $B
  \supset B'$.  Next, no elliptic element is ever above a hyperbolic
  element.  And finally, $e^B < h^M$ iff $M^\perp \subset \dir(B)$.
  The reader should be careful to note that because $M$ is by
  definition a nonlinear subspace of $V$, the vectors orthogonal to
  all of $M$ are also orthogonal to its span, a linear subspace whose
  dimension is $\dim(M) + 1$.  Transitivity is an easy exercise.
\end{defn}

When $W$ is viewed as a poset under the distance ordering, the
invariant map is an order-preserving map between posets.

\begin{prop}[Order-preserving]\label{prop:op}
  If $w$ is an isometry and $r$ is a reflection with $\lr(rw) <
  \lr(w)$ then $\inv(rw) < \inv(w)$ in $P$.  As a consequence, when
  $W$ is viewed as a poset using the distance ordering, the map $\inv
  \colon W \to P$ is a rank-preserving homomorphism betwen posets.
\end{prop}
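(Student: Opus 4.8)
The plan is to reduce the entire statement to the first sentence, since the "as a consequence" clause then follows by routine bookkeeping. Granting that $\lr(rw)<\lr(w)$ implies $\inv(rw)<\inv(w)$, the distance ordering on $W$ means precisely that $w'\le w$ iff there is a chain of reflections taking $w$ down to $w'$ one step at a time with the length dropping by one at each step (using Lemma~\ref{lem:parity} and the definition of reflection below $w$), so applying the first sentence repeatedly gives $\inv(w')\le\inv(w)$; order-preservation is immediate. Rank preservation is where Theorem~\ref{thm:lr} enters: the rank of $w$ in the distance ordering is $\lr(w)$, which equals $\dim(\mov(w))$ when $w$ is elliptic and $\dim(\mov(w))+1=\dim(\spn(\mov(w)))$ when $w$ is hyperbolic; one checks that the rank of $\inv(w)$ in $P$ is computed by exactly the same formula. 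For $h^M$ the rank should be $\dim(M)+1$, and for $e^B$ it should be $\codim(B)=n-\dim(B)=\dim(\mov(w))$ by Lemma~\ref{lem:inv-comp}; so I would state that $P$ is graded with these rank functions (or at least verify the rank of each element along the relevant chains) and conclude the two ranks agree. Finally, "homomorphism between posets" just restates order-preservation, so no extra work is needed.

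So the real content is: \emph{if $r$ is below $w$ (equivalently $\lr(rw)<\lr(w)$) then $\inv(rw)<\inv(w)$.} I would split into the elliptic and hyperbolic cases for $w$ and invoke the descent propositions just proved. If $w$ is elliptic, Proposition~\ref{prop:ell-desc} says $rw$ is elliptic and $\fix(rw)\supset\fix(w)$ as a codimension-$1$ subspace; strict containment of fix-sets is exactly $e^{\fix(rw)}<e^{\fix(w)}$ by the defining order on elliptic elements, so this case is done. If $w$ is hyperbolic, Proposition~\ref{prop:hyp-desc} gives two subcases. When $rw$ is hyperbolic, $\mov(rw)$ is a codimension-$1$ subspace of $\mov(w)$ — in particular a proper nonlinear affine subspace — so $h^{\mov(rw)}<h^{\mov(w)}$ directly from the order on hyperbolic elements. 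When $rw$ is elliptic, I must check $e^{\fix(rw)}<h^{\mov(w)}$, which by Definition~\ref{def:global} means $\mov(w)^{\perp}\subset\dir(\fix(rw))$; but Proposition~\ref{prop:hyp-desc} states $\dir(\fix(rw))^{\perp}=\spn(\mov(w))$, and taking orthogonal complements of both sides (noting $\mov(w)^\perp=\spn(\mov(w))^\perp$ since a nonlinear affine subspace and its span have the same orthogonal complement, as flagged in Definition~\ref{def:global}) gives $\dir(\fix(rw))=\mov(w)^\perp$, which certainly contains $\mov(w)^\perp$. So all three cases close.

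The step I expect to be the main obstacle is the rank-preservation verification — not because it is deep, but because it requires being careful about the two competing rank conventions. One must confirm that the length function $\lr$ really is the rank function of the distance-ordered poset $W$ (true: $w'<w$ in this poset with $w'$ at distance $d$ from $1$ forces $\lr(w')=d$ and chains realize all intermediate lengths by Lemma~\ref{lem:parity}), and separately that $P$ carries a grading in which $h^M$ has rank $\dim(M)+1$ and $e^B$ has rank $\codim(B)$, so that the numerology matches under $\inv$. I would verify the $P$-side grading by exhibiting, for each element, a saturated chain down to a minimal element whose length is the claimed rank — elliptic elements descend through $e^B<e^{B'}$ along a flag of affine subspaces of $E$ from $B$ up to a point is the wrong direction; rather, minimal elements of the relevant subposet are top-dimensional $e^B$ (with $B=E$, rank $0$) and one ascends by dropping dimension — and check that passing a hyperbolic element $h^M$ of rank $\dim(M)+1$ sits just above the elliptic element $e^B$ with $\dir(B)=M^\perp$, which has rank $n-\dim(B)=n-\dim(M^\perp)=\dim(M)$, differing by one, consistent with a covering relation. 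Once these conventions are pinned down, rank preservation is $\lr(w)=\dim(\mov(w))$ or $\dim(\mov(w))+1$ matched against $\operatorname{rank}(\inv(w))$ computed by the same cases, and the proof is complete.
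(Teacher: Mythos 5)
Your handling of the first sentence is correct and is precisely the paper's route: the paper's proof just cites Propositions~\ref{prop:ell-desc} and~\ref{prop:hyp-desc}, and your three cases (elliptic descent, hyperbolic-to-hyperbolic, hyperbolic-to-elliptic) correctly unwind why each descent gives $\inv(rw)<\inv(w)$, including the right verification that $\dir(\fix(rw))=\spn(\mov(w))^\perp=\mov(w)^\perp$ in the third case.

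The flaw is in the rank bookkeeping, which you yourself flag as the main obstacle. Theorem~\ref{thm:lr} gives $\lr(w)=\dim(\mov(w))+2$ for hyperbolic $w$, not $\dim(\mov(w))+1=\dim(\spn(\mov(w)))$ as you assert (the translation part costs two reflections, not one). And when you compute the rank of the elliptic elements just below $h^M$ you set $\dim(M^\perp)=n-\dim(M)$, forgetting exactly the point you invoked earlier from Definition~\ref{def:global}: $M^\perp$ is the orthogonal complement of $\spn(M)$, so $\dim(M^\perp)=n-\dim(M)-1$, the element $e^B$ with $\dir(B)=M^\perp$ has rank $\codim(B)=\dim(M)+1$, and the rank of $h^M$ in $P$ is $\dim(M)+2$, not $\dim(M)+1$. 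Your two off-by-one errors cancel in the final comparison, so the conclusion you state is true, but both sides of your check carry wrong values, and correcting either one alone produces a spurious mismatch. The clean repair is the paper's observation: your three cases already show that each covering relation of $[1,w]$ (multiplying by a reflection below $w$) maps to a covering relation of $P$ --- the fix-set grows by exactly one dimension, the move-set drops by exactly one dimension, or $h^{\mov(w)}$ covers $e^{\fix(rw)}$ with $\dir(\fix(rw))=\mov(w)^\perp$ --- and since the identity is sent to the minimum element $e^E$, rank preservation follows from covers going to covers, with no need to quote Scherk's theorem or compute dimensions of perps at all.
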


\begin{proof}
  This is an immediate consequence of Propositions~\ref{prop:hyp-desc}
  and~\ref{prop:ell-desc} and the observation that the image of a
  covering relation in $W$ is a covering relation in $P$.
\end{proof}

\begin{defn}[Model posets]\label{def:model}
  Let $w \in W$ be an isometry.  By Proposition~\ref{prop:op}, the
  invariant map is order-preserving and thus it sends isometries in
  interval $[1,w]$ to elements less than or equal to $\inv(w)$.  Let
  $P(w)$ denote the subposet of $P$ induced by restricting to
  those elements less than or equal to $\inv(w)$.  We call
  $P(w)$ the \emph{model poset for $w$} and it is a
  \emph{hyperbolic poset} or an \emph{elliptic poset} depending on the
  type of $w$.  Since it is also useful to have a notation for these
  subposets in the absence of an isometry, let $P^M$ denote the
  subposet induced by restricting to those elements less than or equal
  to $h^M$ for a nonlinear affine subspace $M \subset V$ and let $P^B$
  denote the subposet induced by restricting to those elements less
  than or equal to $e^B$ for an affine subspace $B \subset E$.  (This
  notation is unambiguous because $M$ and $B$ are affine subspaces of
  $V$ and $E$, respectively.)  As should be clear, when $w$ is
  hyperbolic $P(w) = P^{\mov(w)}$ and when $w$ is elliptic
  $P(w) = P^{\fix(w)}$.
\end{defn}

The structure of an elliptic poset is straightforward.

\begin{prop}[Elliptic posets]\label{prop:ell-str}
  For each affine subspace $B$ in $E$, the elliptic poset $P^B$ is
  isomorphic to $\lin(U)$ where $U = \dir(B)^\perp$.
\end{prop}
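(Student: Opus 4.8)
The plan is to exhibit an explicit order-reversing bijection between $\lin(U)$, where $U = \dir(B)^\perp$, and the underlying set of $P^B$, and then check it respects the orders in both directions. First I would identify the elements of $P^B$. By definition, $P^B$ consists of those elements of the global poset that are $\leq e^B$. No elliptic element lies below a hyperbolic one and no hyperbolic element is below an elliptic one (Definition~\ref{def:global}), so $P^B$ contains only elliptic elements $e^{B'}$, and the defining condition $e^{B'} \leq e^B$ is exactly $B' \supseteq B$. Thus $P^B = \{\, e^{B'} \mid B' \supseteq B \,\}$ with the order $e^{B'} \leq e^{B''} \iff B' \supseteq B''$.

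The key geometric observation is that the affine subspaces $B'$ of $E$ containing a fixed affine subspace $B$ are in natural bijection with the linear subspaces of $V$ containing $\dir(B)$: send $B'$ to $\dir(B')$. This map is well-defined and injective because an affine subspace containing $B$ is determined by its space of directions (it is the affine hull of $B$ together with that direction space through any point of $B$), and it is surjective since for any linear $U'$ with $U' \supseteq \dir(B)$ we may take $B'$ to be the affine span of $B$ and a point moved in each new direction — concretely, pick any basepoint in $B$ and translate. Composing with the orthogonal-complement bijection of $\lin(V)$ (Definition~\ref{def:lin}), linear subspaces $W'$ with $\dir(B) \subseteq W'$ correspond bijectively to linear subspaces $W'^\perp$ with $W'^\perp \subseteq \dir(B)^\perp = U$, i.e.\ to elements of $\lin(U)$. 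Chaining these: $e^{B'} \in P^B \longmapsto \dir(B') \longmapsto \dir(B')^\perp \in \lin(U)$ is a bijection from the underlying set of $P^B$ to that of $\lin(U)$.

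It remains to verify this bijection is an isomorphism of posets, i.e.\ that it is order-preserving and order-reflecting. We have $e^{B'} \leq e^{B''}$ in $P^B$ iff $B' \supseteq B''$; since both $B',B''$ contain $B$ and each is determined by its direction space, this holds iff $\dir(B') \supseteq \dir(B'')$, which holds iff $\dir(B')^\perp \subseteq \dir(B'')^\perp$, which is exactly the order in $\lin(U)$. All three equivalences run in both directions, so the composite is an order isomorphism, proving $P^B \cong \lin(U)$. The main thing to be careful about — the only place anything could go wrong — is the claim that an affine subspace containing $B$ is uniquely recovered from its direction space; this needs the observation that if $B' \supseteq B$ then $B'$ is the affine hull of $B \cup (b + \dir(B'))$ for any fixed $b \in B$, which follows from Definition~\ref{def:pt-vec} together with the fact that $b \in B \subseteq B'$ forces $b + \dir(B') \subseteq B'$. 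Everything else is a routine chase through the definitions of $\lin$, $\dir$, and the global order.
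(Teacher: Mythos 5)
Your proposal is correct and follows essentially the same route as the paper: the paper's (much terser) proof is exactly the chain of isomorphisms you spell out, namely $P^B \cong \{$affine subspaces containing $B$, reverse inclusion$\} \cong \{$linear subspaces containing $\dir(B)$, reverse inclusion$\} \cong \lin(\dir(B)^\perp)$. Your extra care with the bijection $B' \mapsto \dir(B')$ and the order checks simply fills in details the paper leaves implicit.
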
 

\begin{proof}
  The poset $P^B$ is essentially the poset of affine subspaces of $E$
  that contain $B$ under reverse inclusion which is isomorphic to
  linear subspaces of $V$ that contain $\dir(B)$ under reverse
  inclusion and thus isomorphic to linear subspaces of $\dir(B)^\perp
  = U$ under inclusion.
\end{proof}

The structure of a hyperbolic poset is only slightly more complicated.

\begin{rem}[Hyperbolic posets]\label{rem:hyp-str}
  Every hyperbolic poset can be decomposed into two subposets whose
  structure is easy to describe.  Let $M$ be a nonlinear affine
  subspace of $V$ and for the moment assume that $M$ has
  codimension~$1$.  From the definition it is clear that the
  hyperbolic elements in the hyperbolic poset $P^M$ form an induced
  subposet isomorphic to $\aff(M)$ and the elliptic elements in $P^M$
  form an induced subposet isomorphic to $\aff(E)^*$ where the
  asterisk indicates that this is the dual of $\aff(E)$ with reverse
  inclusion instead of inclusion.  This is because $\spn(M)$ is all of
  $V$ and $M^\perp = \spn(M)^\perp$ is the trivial subspace.  Thus
  every affine subspace $B$ in $E$ satisfies $M^\perp \subset
  \dir(B)$.  A similar structure is present even when $M$ is not
  codimension~$1$ since the elliptics $e^B$ below $h^M$ are uniquely
  determined by the intersection of $B$ with any fixed affine subspace
  $C$ in $E$ with $\dir(C) = M^\perp$.  Thus, in this case, the
  hyperbolic and elliptic elements of $P^M$ induce subposets that look
  like $\aff(M)$ and $\aff(C)^*$.  From these two basic pieces the
  whole poset is described by declaring that some elements in
  $\aff(M)$ are above specific elements in $\aff(C)^*$.
\end{rem}

\section{Models for Intervals}

In this section we prove that the map $\inv \colon [1,w] \to
P(w)$ is an isomorphism of posets.  This should be slightly
surprising since the invariant map is far from injective in
general. There are many distinct rotations, for example, that rotate
around the same codimension~$2$ subspace.  We begin with three lemmas
about local situations.

\begin{lem}[From elliptic to elliptic]\label{lem:ell-ell}
  If $w$ is an elliptic isometry with $\inv(w) = e^B$ and $C \subset
  E$ is an affine subspace containing $B$ as a codimension~$1$
  subspace, then there is a reflection $r$ such that $u = rw$ and
  $\inv(u) = e^C$.
\end{lem}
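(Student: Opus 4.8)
The plan is to use Lemma~\ref{lem:fix-point} essentially verbatim. Since $w$ is elliptic with $\fix(w) = B$ a $k$-dimensional affine subspace (where $k = \dim B$), and $C$ contains $B$ as a codimension~$1$ subspace, $C$ is a $(k+1)$-dimensional affine subspace containing $B$. So Lemma~\ref{lem:fix-point} applies directly and produces a unique reflection $r$ with $u = rw$ and $\fix(u) = C$. It remains only to check that $u$ is elliptic and that $\inv(u) = e^C$, which are almost immediate: $u$ fixes the nonempty set $C$, so $u$ is elliptic, and then by Definition~\ref{def:global} we have $\inv(u) = e^{\fix(u)} = e^C$.

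One degenerate case needs a word. Lemma~\ref{lem:fix-point} is stated for a \emph{nontrivial} elliptic isometry $w$. If $w$ is trivial, then $\fix(w) = B = E$, but then no affine subspace $C$ properly contains $B$, so the hypothesis that $B$ is a codimension~$1$ subspace of some $C$ is vacuously unavailable and there is nothing to prove. Hence we may assume $w$ is nontrivial and Lemma~\ref{lem:fix-point} is applicable.

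I expect no real obstacle here: the content of the lemma is entirely packaged in Lemma~\ref{lem:fix-point}, and the only new ingredient is translating the conclusion $\fix(u) = C$ into the language of the invariant map and the model poset, which is a matter of unwinding Definition~\ref{def:global}. The one thing to be careful about is matching the dimension bookkeeping: "codimension~$1$ subspace of $C$" for $B$ is the same as "$(\dim B + 1)$-dimensional affine subspace containing $B$" for $C$, which is exactly the hypothesis format of Lemma~\ref{lem:fix-point}.

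\begin{proof}
If $w$ is trivial then $\fix(w) = B = E$ has no affine subspace properly containing it, so the hypothesis is vacuous and there is nothing to prove. Assume then that $w$ is nontrivial. Since $\inv(w) = e^B$ and $w$ is elliptic, $B = \fix(w)$ is a $k$-dimensional affine subspace for some $k$, and the hypothesis that $B$ is a codimension~$1$ subspace of $C$ says precisely that $C$ is a $(k+1)$-dimensional affine subspace containing $B$. Applying Lemma~\ref{lem:fix-point} to $w$ and $C$ produces a unique reflection $r$ such that $u = rw$ satisfies $\fix(u) = C$. Because $C$ is nonempty, $u$ fixes a point and is therefore elliptic, so by Definition~\ref{def:global} we have $\inv(u) = e^{\fix(u)} = e^C$, as desired.
\end{proof}
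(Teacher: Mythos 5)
Your proof is correct and is exactly the paper's argument: the paper simply remarks that this lemma is a restatement of Lemma~\ref{lem:fix-point} in the terminology of the invariant map, which is what you carry out (with the extra, harmless care about the trivial-isometry case).
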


\begin{proof} 
  This is a restatement of Lemma~\ref{lem:fix-point} in the new
  terminology.
\end{proof}

\begin{lem}[From hyperbolic to elliptic]\label{lem:hyp-ell}
  If $w$ is a hyperbolic isometry with $\inv(w) = h^M$ and $B$ is an
  affine subspace of $E$ with $M^\perp = \spn(M)^\perp = \dir(B)$,
  then there is a reflection $r$ such that $u = rw$ and $\inv(u) =
  e^B$.
\end{lem}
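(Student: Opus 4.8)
The plan is to reduce this statement to the hyperbolic-to-elliptic descent already analyzed in Proposition~\ref{prop:hyp-refl} and Proposition~\ref{prop:hyp-desc}. Let $w$ be hyperbolic with $\mov(w) = U + \mu$ in standard form, so $\inv(w) = h^M$ with $M = \mov(w)$ and $M^\perp = \spn(M)^\perp$. We are given an affine subspace $B \subset E$ with $\dir(B) = M^\perp$; since $\spn(M)$ has dimension $\dim(M)+1 = k+1$ where $k = \lr(w)-2$, the subspace $\dir(B) = \spn(M)^\perp$ has dimension $n-k-1$. We want a reflection $r$ so that $rw$ is elliptic with $\fix(rw) = B$. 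By Proposition~\ref{prop:hyp-refl}, the elliptic case $\lr(rw) = k-1$ occurs exactly when the root $\alpha$ of $r$ satisfies $\alpha \notin \dir(\mov(w)) = U$ but $\mu \in U_\alpha = \spn(U \cup \{\alpha\})$. So first I would identify which reflections produce an elliptic product at all, and then pin down which one lands on the prescribed fix-set $B$.

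First I would handle existence. Pick any point $x_0 \in B$ and note that since $\dir(B) = \spn(M)^\perp$ is orthogonal to $\mov(w)$, the point $x_0$ is not fixed by $w$ unless $w$ were elliptic, which it is not; concretely the motion vector of $x_0$ lies in $\mov(w)$, which meets $\dir(B)^\perp = \spn(M)$ only in the directions actually realized. Better: invoke the standard splitting $w = t_\mu u$ of Proposition~\ref{prop:std-split}, where $u$ is elliptic with $\mov(u) = U$ and $\fix(u) = \ms(w)$, an affine subspace with $\dir(\ms(w)) = U^\perp$ by Lemma~\ref{lem:inv-orth}. Now $\dim(\fix(u)) = n-k$, which is exactly one more than $\dim(B) = n-k-1$. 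The key geometric claim is that $B$ sits inside $E$ as a codimension-one subspace of some conjugate configuration; more precisely, I would argue that there is a reflection $r$ with $rw$ elliptic and $\fix(rw)$ equal to the unique affine subspace with direction space $\dir(B)$ that is stabilized appropriately. Concretely, take the reflection $r$ that sends some chosen point $x \notin \fix$-locus to $w(x)$ (Proposition~\ref{prop:motion-refl}), which is automatically a reflection below $w$, then use Proposition~\ref{prop:hyp-desc}: if $rw$ is elliptic, then $\dir(\fix(rw))^\perp = \spn(\mov(w)) = M^\perp{}^\perp$, i.e. $\dir(\fix(rw)) = \spn(M)^\perp = M^\perp = \dir(B)$. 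So any reflection below $w$ yielding an elliptic product already produces a fix-set parallel to $B$; the remaining freedom is translating that fix-set onto $B$ itself.

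The main obstacle, and the heart of the argument, is the uniqueness-of-location step: showing we can choose $r$ so that $\fix(rw)$ is not merely parallel to $B$ but equals $B$. For this I would parametrize the eligible reflections. Having fixed the direction constraint $\alpha \notin U$ and $\mu \in U_\alpha$, the root direction $\alpha$ is essentially forced up to the choice within $\spn(U \cup \{\mu\}) \cap U^\perp$ — indeed $\mu \in U_\alpha$ with $\alpha \notin U$ forces $\alpha$ to have a nonzero component along $\mu$, and minimality considerations fix $\alpha$ up to sign as the unit vector in the $\mu$ direction of $U^\perp$. Then the reflection $r_\alpha$ is determined by its hyperplane $H_\alpha$ in $E$, which can be any affine hyperplane with that direction space; sliding $H_\alpha$ parallel to itself translates the resulting $\fix(rw)$ across the family of $U_\alpha^\perp$-parallel subspaces (compare the $U^\perp$-subspace bookkeeping in the proof of Proposition~\ref{prop:identify-min}). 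Since $B$ has the correct direction space $\dir(B) = M^\perp$ and the family of parallel translates is transitively swept as $H_\alpha$ moves, there is exactly one position of $H_\alpha$ for which $\fix(r_\alpha w) = B$. I would verify this last transitivity by an explicit motion computation: writing $w = t_\mu u$ and composing with $r_\alpha$ where $r_\alpha$ is built from a hyperplane through a prospective point of $B$, one checks directly that $r_\alpha w$ fixes that point and hence (being elliptic with the right direction space, by the dimension count of Lemma~\ref{lem:inv-comp}) fixes all of $B$. Uniqueness then follows from Lemma~\ref{lem:fix-point}-style rigidity: a reflection below $w$ with elliptic product of this reflection length, fixing a specified point, is unique.
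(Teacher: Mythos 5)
There is a genuine gap, and it sits exactly where you place ``the heart of the argument.''  Two of your key claims are false.  First, the conditions for an elliptic descent in Proposition~\ref{prop:hyp-refl} ($\alpha \notin U$ and $\mu \in U_\alpha$) only force $\alpha$ to lie in $\spn(U \cup \{\mu\})$ with a nonzero $\mu$-component; they do \emph{not} force $\alpha$ to be the unit vector in the $\mu$-direction, and no ``minimality consideration'' does so.  This matters because if you do restrict to $\alpha \propto \mu$ and then slide the hyperplane $H_\alpha$ parallel to itself, the fix-sets you sweep are precisely the affine subspaces $\ms(w) \cap t_{-\mu/2}(H_\alpha)$, i.e.\ only those $B$ that lie \emph{inside} the min-set.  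A general $B$ with $\dir(B) = \spn(M)^\perp$ need not lie in $\ms(w)$ (for a planar glide reflection, $B$ is an arbitrary point of $E$, not just a point of the glide axis), so your ``transitively swept'' claim fails and the construction cannot reach the prescribed $B$.  Second, your verification step is misstated: if $r_\alpha$ is ``built from a hyperplane through a prospective point $x$ of $B$,'' then $r_\alpha$ fixes $x$ and $r_\alpha w(x) = r_\alpha(w(x)) \neq x$, since the hyperbolic $w$ fixes no point; so $r_\alpha w$ does not fix $x$ and the direct check collapses.  (You also never establish that your chosen reflection gives an elliptic product; you only say ``if $rw$ is elliptic.'')

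The correct choice, which is the paper's proof and is already implicit in your own appeal to Proposition~\ref{prop:motion-refl}, is to anchor the construction at a point $x \in B$ and take $r$ to be the unique reflection interchanging $x$ and $w(x)$ --- its hyperplane is the perpendicular bisector of the segment from $x$ to $w(x)$, not a hyperplane through $x$.  Then $u = rw$ fixes $x$ automatically.  The root of $r$ is parallel to the motion vector of $x$, which lies in $M$ and hence outside $U = \dir(M)$ because $M$ is nonlinear; so by Proposition~\ref{prop:move-refl} one gets $\mov(u) = \spn(M)$, hence $u$ is elliptic, and by Lemma~\ref{lem:inv-orth} $\dir(\fix(u)) = \spn(M)^\perp = \dir(B)$.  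Since $\fix(u)$ and $B$ share the point $x$ and have the same space of directions, $\fix(u) = B$.  No parametrization of eligible roots and no uniqueness-of-location or sliding step is needed; the choice of $x \in B$ does all the positional work.
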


\begin{proof}
  If $x$ is a point in $B$ then, since $w$ does not fix $x$, the set
  of points equidistant from $x$ and $w(x)$ form a hyperplane $H$.
  Let $r$ be the reflection that fixes $H$, let $\pm \alpha$ be its
  roots and let $u = rw$.  Since some scalar multiple of $\alpha$ lies
  in $M$ and $M$ is nonlinear, $\alpha$ is not in $U = \dir(M)$. By
  Proposition~\ref{prop:move-refl}, $\mov(u) = \spn(M)$ and by
  Lemma~\ref{lem:inv-orth}, $\dir(\fix(u)) = M^\perp = \spn(M)^\perp$.
  Since the affine subspace $B$ is determined by one of its points and
  its space of directions, $\fix(u) = B$ and $\inv(u) = e^B$ as
  required.
\end{proof}

\begin{lem}[From hyperbolic to hyperbolic]\label{lem:hyp-hyp}
  If $w$ is a hyperbolic isometry with $\inv(w) = h^M$ and $M'$ is a
  codimension~$1$ affine subspace of $M = \mov(w)$, then there is a
  reflection $r$ such that $u = rw$ and $\inv(u) = h^{M'}$.
\end{lem}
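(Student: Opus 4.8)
The plan is to produce the required reflection $r$ by the same device used in Lemma~\ref{lem:hyp-ell}: pick a well-chosen point $x$ that is not fixed by $w$, let $H$ be the hyperplane of points equidistant from $x$ and $w(x)$, let $r=r_\alpha$ be the reflection in $H$, and set $u=rw$. The reflection $r$ obtained this way sends $w(x)$ to $x$, so $x$ is moved by $\zer$ under $u$; equivalently $x\in\fix(u)$ if $u$ is elliptic, but here we want to keep $u$ hyperbolic with $\mov(u)=M'$. So the first step is to choose $x$ correctly. Write $M=U+\mu$ in standard form, so $U=\dir(M)$ and $\mu\in U^\perp$, and let $\ms(w)=B_0$ be the min-set. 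Since $M'$ is a codimension~$1$ affine subspace of $M$, its space of directions $U'=\dir(M')$ is either $U$ itself (when $M'$ is a "parallel" translate, i.e. $M'=U+\mu'$ with $\mu'\notin U$) or a codimension~$1$ linear subspace of $U$ (when $\mu\in U'$ and $M'=U'+\mu$). I would choose $x$ in the min-set $B_0$, chosen so that the vector from $x$ to $w(x)$, which equals $\mu$, together with the geometry of $H$, forces $\mov(u)$ to be exactly $M'$.

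The key computational step is to invoke Proposition~\ref{prop:move-refl}. With roots $\pm\alpha$ of $r$ and $U_\alpha=\spn(U\cup\{\alpha\})$, that proposition says $\mov(rw)$ is a codimension~$1$ subspace of $\mov(w)$ when $\alpha\in U$, and equals $U_\alpha+\mu$ when $\alpha\notin U$. Since we need $\mov(u)$ to have the same dimension as $\mov(w)$ (namely $k=\dim M=\dim M'$), and $\mov(u)$ must be a proper affine subspace relationship with $\mov(w)$, the case $\alpha\notin U$ is the one that occurs: then $\mov(u)=U_\alpha+\mu$ has dimension $k+1$... which is too big. Re-examining: $M'\subset M$ with $M'$ of codimension one means $\dim M'=k-1$, so $\mov(u)$ should have dimension $k-1$, forcing $\alpha\in U$ (the descent case of Proposition~\ref{prop:move-refl}), giving $\mov(rw)$ a codimension~$1$ subspace of $\mov(w)=M$. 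The heart of the argument is then to pin down \emph{which} codimension~$1$ subspace: I would compute $\mov(u)$ directly as the set of motions $\{u(y)-y : y\in E\}$, using that $u(x)-x=\zer$ for the chosen $x$ and that $r$ only alters motions in the $\alpha$-direction, and show the answer is exactly $M'$ by choosing $\alpha$ appropriately inside $U$ and $x$ appropriately inside $B_0$.

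Concretely, since $M'$ has codimension~$1$ in $M$ and $\dir(M')$ has codimension~$1$ in $U=\dir(M)$, let $\alpha$ be the unit vector in $U$ orthogonal to $\dir(M')$, so that $H_\alpha\supset \dir(M')$, and let $L_\alpha$ be the line it spans. The affine subspace $M'$ is then the unique $\dir(M')$-coset inside $M$ passing through a specific point; I would choose $x\in B_0$ so that the unique point of $M\cap(\text{the }L_\alpha\text{-line through }\mu)$ that $x$ picks out lands in $M'$. Then $u(x)-x=r(w(x)-x)=r(\mu)$; one checks $r(\mu)$ differs from $\mu$ only by a multiple of $\alpha$, and this multiple is forced so that $r(\mu)\in M'$ (here one uses that $\mu\in U^\perp$ and $\alpha\in U$, so actually $r(\mu)=\mu-2\langle\mu,\alpha\rangle\alpha=\mu$, meaning $\mu\in M'$ automatically — so the only freedom is the choice of $x$ within $B_0$ so that the $\dir(M')$-coset determined by $u$ is $M'$ rather than another parallel coset). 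Finally, Proposition~\ref{prop:move-refl} gives $\mov(u)=$ a codimension~$1$ subspace of $M$ containing $\mu$ and with directions $\dir(M')$, hence $\mov(u)=M'$; since $M'$ is nonlinear (it contains $\mu$, which lies in $\dir(M')^\perp$... one must check $\mu\notin\dir(M')$, which holds because $\mu\in U^\perp$ and $\dir(M')\subset U$ with $\mu\neq\zer$), $u$ is hyperbolic and $\inv(u)=h^{M'}$.

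\textbf{Main obstacle.}
The delicate point is the bookkeeping in the second case, where $M'$ is a translate with $\dir(M')=U=\dir(M)$ (the "parallel slab" case): then $\alpha\notin U$ is forced, $\mov(rw)=U_\alpha+\mu$ has dimension $k+1$, which contradicts needing dimension $k-1$ — so in fact this case \emph{cannot} arise from a single reflection keeping $u$ of the right type, and one must check carefully that a codimension~$1$ affine subspace $M'\subset M$ \emph{always} has $\dir(M')$ of codimension~$1$ in $\dir(M)$, i.e. a proper affine subspace of $M$ of codimension one can never be a parallel translate living inside $M$ — which is automatic since $M'\subset M$ and both have $M$'s directions would force $M'=M$. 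So the "bad" case is vacuous, and the real work is just the coset-selection argument above: verifying that as $x$ ranges over $B_0=\ms(w)$, the codimension~$1$ coset $\mov(rw)$ realized sweeps out \emph{all} codimension~$1$ subspaces of $M$ parallel to $H_\alpha\cap U$, so that $M'$ is hit. This is the analog of the freedom exploited in Lemma~\ref{lem:fix-point}, and I expect it to follow from the fact that $w$ acts on $\ms(w)$ by a translation (by $\mu$), together with Proposition~\ref{prop:identify-min}.
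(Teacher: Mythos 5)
Your plan breaks down at its central step: the assertion that, having chosen the root $\alpha$ to be the unit vector of $U=\dir(M)$ orthogonal to $\dir(M')$ and the mirror through a point of $\ms(w)$, Proposition~\ref{prop:move-refl} ``gives'' that $\mov(rw)$ has direction $\dir(M')$. That proposition only says that $\mov(rw)$ is \emph{some} codimension-$1$ affine subspace of $M$ when $\alpha\in U$; it says nothing about which one, and which one appears is governed by the elliptic part of $w$, not by $\alpha$ alone. Concretely, take $E=\R^3$ and the screw motion $w(x,y,z)=(-y,x,z+1)$, so that $M=\mov(w)=\{(a,b,1): a,b\in\R\}$, $U$ is the $xy$-plane, $\mu=(0,0,1)$ and $\ms(w)$ is the $z$-axis. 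For $M'=\{(a,0,1)\}$ your recipe gives $\alpha=(0,1,0)$ and mirror the $xz$-plane; then $rw(x,y,z)=(-y,-x,z+1)$ and $\mov(rw)=\{(t,t,1)\}$, the diagonal line, not $M'$. The reflection that does work has mirror $\{x=-y\}$ with root $(1,1,0)/\sqrt{2}$, which is not orthogonal to $\dir(M')$: in general the correct root comes from the standard splitting $w=t_\mu u$, namely the reflection of $U$ carrying $\bar{u}(v)$ to $v$, where $v$ spans $\dir(M')^\perp\cap U$ and $\bar{u}$ is the linear part of $u$ (the root is twisted by ``half the rotation'' of $u$). Two further slips compound this. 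First, the opening perpendicular-bisector device can never produce a hyperbolic $rw$, since it forces $rw(x)=x$; your later computation $u(x)-x=r(\mu)$ silently replaces it by a mirror through $x$, which is a different construction. Second, ``$\mu\in M'$ automatically'' is false (a codimension-$1$ affine subspace of $M$ need not pass through $\mu$), and the freedom you invoke to repair this, varying $x$ inside $B_0=\ms(w)$, is illusory: since $\dir(B_0)=U^\perp\subset H_\alpha$, every $x\in B_0$ determines the \emph{same} mirror, and any mirror through the min-set forces $\mu\in\mov(rw)$, so the cosets of $\dir(M')$ in $M$ that miss $\mu$ can never be reached this way; one must translate the mirror in the $\alpha$-direction off the min-set.

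For comparison, the paper's proof uses a genuinely different device: the mirror is taken to be the hyperplane $B$ of \emph{all} points of $E$ whose motion under $w$ lies in $M'$ (affine by Proposition~\ref{prop:inv-aff}, of codimension $1$ because the motion map has linear part $\bar{w}-\textrm{id}$ with image $U$), and the conclusion is then drawn from Proposition~\ref{prop:move-refl} together with a containment-and-dimension count, not from prescribing the root direction in advance. Note, however, that the orthogonality you wanted is not available there either, and the bookkeeping is delicate even on that route: in the example above the paper's hyperplane is $\{x=y\}$, whose root $(1,-1,0)/\sqrt{2}$ is again not orthogonal to $\dir(M')$, and a direct check gives $\mov(wr)=M'$ while $\mov(rw)=\{(0,b,1)\}$, so as written the recipe must be applied to $\bar{w}(M')$ (equivalently, corrected for left versus right multiplication) to yield the stated conclusion. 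The honest content of the lemma is exactly the identification of \emph{which} codimension-$1$ subspace of $M$ arises, and your proposal never engages with the rotational data that determines it, so the gap is essential rather than cosmetic.
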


\begin{proof}
  Let $B$ be the set of all points in $E$ moved by some $\lambda \in
  M'$.  By Proposition~\ref{prop:inv-aff} $B$ is an affine subspace
  and because $M'$ is a proper subspace of $M$, $B$ is a proper
  subspace of $E$.  In fact, because $M'$ has codimension~$1$ in $M$,
  $B$ has codimension~$1$ in $E$, i.e. a hyperplane.  Let $r$ be the
  corresponding reflection and note that its roots $\pm \alpha$ are
  the unit vectors orthogonal to $\dir(M')$ inside $\dir(M)$.  By
  Proposition~\ref{prop:move-refl} $\mov(rw)$ is a codimension~$1$
  subspace of $M$ but since $r$ does not move $B$, $\mov(rw) \supset
  M'$ and thus $\mov(rw) = M'$.
\end{proof}

\begin{prop}[Surjective]\label{prop:inv-surj}
  Let $w \in W$ be an isometry.  For each maximal chain in $P(w)$ from
  $e^E$ to $\inv(w)$ of length~$k$, there is a factorization of $w$ as
  a product of $k$ reflections, $w=r_1r_2 \cdots r_k$, so that the
  suffixes of this factorization are send under the invariant map to
  the elements in this maximal chain.  As a consequence, the map $\inv
  \colon [1,w] \to P(w)$ is surjective.
\end{prop}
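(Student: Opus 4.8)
The plan is to induct on the length $k$ of the given maximal chain $e^E = p_0 \lessdot p_1 \lessdot \cdots \lessdot p_k = \inv(w)$ in $P(w)$, building the factorization one reflection at a time from the left. When $k=0$ the chain is a single point and $\inv(w)=e^E$ forces $\fix(w)=E$, so $w=\id$ and the empty product is the desired factorization. For the inductive step I would examine the top covering relation $p_{k-1}\lessdot p_k$ and split into cases according to the types of $p_{k-1}$ and $p_k$. Because $\inv$ sends hyperbolic isometries to hyperbolic elements and elliptic ones to elliptic elements by definition, the type of $p_k=\inv(w)$ determines the type of $w$; and since no elliptic element lies above a hyperbolic one, the case ``$p_{k-1}$ hyperbolic, $p_k$ elliptic'' cannot occur. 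That leaves exactly three cases, each matched to one of the local lemmas.

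In detail: if $p_{k-1}=h^{M'}$ and $p_k=h^M$ are both hyperbolic then $M=\mov(w)$ and, the relation being a covering, $M'$ is a codimension-one subspace of $M$, so Lemma~\ref{lem:hyp-hyp} supplies a reflection $r_1$ with $\inv(r_1 w)=h^{M'}$. If $p_{k-1}=e^B$ is elliptic and $p_k=h^M$ is hyperbolic then $M=\mov(w)$, and the fact that $e^B\lessdot h^M$ is a covering forces $\dir(B)=\spn(M)^\perp=M^\perp$, for otherwise one could choose $B'\subsetneq B$ with $\dir(B')=M^\perp$ and obtain $e^B<e^{B'}<h^M$; hence Lemma~\ref{lem:hyp-ell} applies and yields $r_1$ with $\inv(r_1 w)=e^B$. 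If $p_{k-1}=e^C$ and $p_k=e^B$ are both elliptic then $B=\fix(w)$ and, the relation being a covering, $C$ contains $B$ as a codimension-one subspace, so Lemma~\ref{lem:ell-ell} produces $r_1$ with $\inv(r_1 w)=e^C$. In each case put $w'=r_1 w$; comparing the displayed move-set dimensions through Theorem~\ref{thm:lr} gives $\lr(w')=\lr(w)-1$, so $r_1$ is a reflection below $w$.

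Now $\inv(w')=p_{k-1}$, and $e^E=p_0\lessdot\cdots\lessdot p_{k-1}$ is a maximal chain of length $k-1$ in the model poset $P(w')$, since a covering in $P$ between two of its elements remains a covering there. By the inductive hypothesis $w'=r_2\cdots r_k$ with its suffixes sent under $\inv$ to $p_0,\ldots,p_{k-1}$, and prepending $r_1$ gives $w=r_1 r_2\cdots r_k$ whose length-$j$ suffix is sent to $p_j$ for every $j$. For the final assertion, iterating the length drop shows $k=\lr(w)$, so this is a minimal reflection factorization; writing $w=(r_1\cdots r_{k-j})\,w_j$ for a suffix $w_j$ exhibits $w_j^{-1}w$ as a conjugate of $r_1\cdots r_{k-j}$, and since reflections are closed under conjugation $\lr(w_j^{-1}w)=k-j$, whence $\lr(w_j)+\lr(w_j^{-1}w)=\lr(w)$ and $w_j\in[1,w]$. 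Finally every element of $P(w)$ lies on some maximal chain from $e^E$ to $\inv(w)$ — concatenate a saturated chain below it with one above it — so it is the image of such a suffix, and therefore $\inv\colon[1,w]\to P(w)$ is surjective.

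The step I expect to demand the most care is the elliptic-to-hyperbolic case: one must be certain that a covering $e^B\lessdot h^M$ forces $\dir(B)$ to equal $M^\perp$ exactly, so that the hypothesis of Lemma~\ref{lem:hyp-ell} is genuinely met, and this rests on pinning down the grading of $P$, i.e.\ the ranks of its hyperbolic and elliptic elements. The type trichotomy and the verification that suffixes of a minimal factorization belong to $[1,w]$ are routine once that is settled.
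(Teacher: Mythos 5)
Your proof is correct and takes essentially the same route as the paper: descend the given maximal chain from $\inv(w)$, using Lemmas~\ref{lem:ell-ell}, \ref{lem:hyp-ell} and~\ref{lem:hyp-hyp} to peel off one reflection per covering relation, and read the chain off the suffixes of the resulting factorization. You simply organize this as an induction and make explicit details the paper leaves implicit, namely that covering relations in $P(w)$ satisfy the hypotheses of the three lemmas and that the suffixes of the resulting minimal length factorization lie in $[1,w]$.
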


\begin{proof}
  In what follows we describe the elements in the given maximal chain
  in descending order so that $\inv(w)$ is its first element and $e^E$
  is its last.  By Lemma~\ref{lem:ell-ell}, Lemma~\ref{lem:hyp-ell} or
  Lemma~\ref{lem:hyp-hyp} depending on the types of the first and
  second elements in the chain, there is a reflection $r_1$ where
  $\inv(r_1w)$ is the second element in the chain.  Applying the same
  lemmas to $r_1w$ means there is a reflection $r_2$ such that
  $\inv(r_2r_1w)$ is the third element in the chain, and so on.  After
  $k$ repetitions, we have found $\{r_i\}$ so that $\inv(r_k \cdots
  r_2 r_1w) = e^E$ but this means that $r_k \cdots r_2 r_1 w$ fixes
  all of $E$, it is the identity and rewriting yields $w = r_1 r_2
  \cdots r_k$.  The intermediate stages are $r_i \cdots r_2 r_1 w =
  r_{i+1} \cdots r_k$ and these are sent by the invariant map to the
  correct elements in the maximal chain by construction.
\end{proof}

\begin{prop}[Injective]\label{prop:inv-inj}
  For each isometry $w \in W$ the map $\inv \colon [1,w] \to P(w)$ is
  injective.
\end{prop}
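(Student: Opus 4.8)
The plan is to show that an element of $[1,w]$ can be reconstructed explicitly from its image under $\inv$ together with $w$, so that $\inv(w_1)=\inv(w_2)$ is forced to imply $w_1=w_2$. To begin, suppose $w_1,w_2\in[1,w]$ with $\inv(w_1)=\inv(w_2)$. From the definition of $P$ and its order these two isometries have the same type, and they have the same fix-set in the elliptic case and the same move-set in the hyperbolic case. I will carry out the elliptic case in detail, since it already contains the main idea, and then indicate the bookkeeping needed for the hyperbolic case.

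Suppose first that $w$ is elliptic. By Lemma~\ref{lem:min-ell} every element of $[1,w]$ is elliptic with fix-set containing $\fix(w)$, so a choice of basepoint in $\fix(w)$ identifies each such isometry with an orthogonal linear transformation of $V$; moreover every $w'\le w$ has $\mov(w')\subset\mov(w)$ (again by Lemma~\ref{lem:min-ell}) and so acts trivially on $\mov(w)^\perp$, and hence, after replacing $V$ by $\mov(w)$, we may assume $\fix(w)=\zer$. Then $x\mapsto w(x)-x$ is an invertible linear map and $\dim V=\lr(w)$ by Lemma~\ref{lem:inv-comp}. Now fix $w'\le w$, set $U=\mov(w')$ — so $\fix(w')=U^\perp$ by Lemma~\ref{lem:inv-orth} and $w'$ is the identity on $U^\perp$ — and let $v'=(w')^{-1}w$. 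Splitting a minimal reflection factorization of $w$ at $w'$ and invoking Lemma~\ref{lem:min-ell} shows that $v'$ is elliptic, that $\mov(v')$ is a complement of $U$ in $V$, and that $\fix(v')=\mov(v')^\perp$ has dimension $\dim U$. The key point is that for $x\in\fix(v')$ one has $w(x)=w'\bigl(v'(x)\bigr)=w'(x)$, so $w(x)-x=w'(x)-x\in\mov(w')=U$; since $x\mapsto w(x)-x$ is injective, the subspace $\{x\in V : w(x)-x\in U\}$ has dimension $\dim U=\dim\fix(v')$, and therefore
\[
\fix(v') \;=\; \{\, x\in V : w(x)-x\in U \,\}.
\]
This subspace depends only on $U$ and $w$, and $\fix(w')\cap\fix(v')\subset\fix(w)=\zer$ together with a dimension count gives $V=\fix(w')\oplus\fix(v')$. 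Hence $w'$, being the identity on $\fix(w')$ and agreeing with $w$ on $\fix(v')$, is completely determined by $U=\mov(w')$, that is by $\inv(w')$, and by $w$. In particular $w_1=w_2$.

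For hyperbolic $w$ the same scheme works, again via the complement $v'=(w')^{-1}w$, which still lies in $[1,w]$. What must be added is a catalogue of cases: using Propositions~\ref{prop:hyp-refl} and~\ref{prop:ell-refl} one records which types $w'$ and $v'$ can carry along a minimal factorization of $w$; when $v'$ is elliptic, $w$ and $w'$ agree exactly on the affine subspace $\fix(v')$, which one identifies with an explicit affine subset of $E$ determined by $\inv(w')$ and $w$, exactly as above; and when $v'$ is hyperbolic one argues in parallel with move-sets and min-sets in place of fix-sets, using the characterization of min-sets in Proposition~\ref{prop:identify-min} to pin down the relevant affine subspace. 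Alternatively, one can pass to the standard splitting $w=t_\mu u$ of Proposition~\ref{prop:std-split} and bootstrap the hyperbolic statement from the elliptic one. I expect this hyperbolic case analysis to be the main obstacle: the elliptic computation above is the model for each subcase, but the translation component $\mu$ has to be tracked through several of them before one sees that the affine invariant of $v'$ is forced by $\inv(w')$ and $w$.
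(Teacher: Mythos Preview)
Your treatment of the case where $w$ is elliptic is correct and follows a genuinely different route from the paper. You reconstruct $w'$ directly from $\inv(w')$ and $w$ via the identification $\fix(v')=\{x:w(x)-x\in U\}$, using that $x\mapsto w(x)-x$ is a linear isomorphism once one has reduced to $\fix(w)=\zer$. The paper instead argues by induction on $j=\lr(w)-\lr(u)$, splitting on the type of $u$ rather than of $w$: in its Case~1 (elliptic $u,u'$) it picks a point $x\in B=\fix(u)=\fix(u')$ not fixed by $w$, notes that the unique reflection $r$ sending $x$ to $w(x)$ lies below both $v=u^{-1}w$ and $v'=(u')^{-1}w$ (Proposition~\ref{prop:motion-refl}), writes $v=v_0r$ and $v'=v_0'r$, and applies the inductive hypothesis inside $[1,wr]$. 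Your direct method is more explicit and avoids induction altogether in the elliptic regime.

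The hyperbolic case, however, is not actually proved, and the obstacle is more than bookkeeping. Your reconstruction formula relies on $x\mapsto w(x)-x$ being a linear isomorphism, which fails whenever $w$ is hyperbolic. In your sketch you propose splitting further on the type of $v'$, but it is not a priori clear that the type of $v'$, let alone $\fix(v')$ or $\ms(v')$, is determined by $\inv(w')$ and $w$ alone; the containment $\fix(v')\subset\{x:w(x)-x\in\mov(w')\}$ still holds, but the dimension count that upgraded it to an equality no longer goes through directly. Your alternative, bootstrapping via the standard splitting $w=t_\mu u$, handles only the subcase where $w'$ is itself hyperbolic (stripping the translation part of $w'$ lands in an elliptic interval, which is exactly the paper's Case~2); it does not cover $w$ hyperbolic with $w'$ elliptic, since such a $w'$ need not lie in $[1,u]$ at all---for instance when $w$ is a pure translation the elliptic part $u$ is the identity, yet $[1,w]$ still contains many reflections. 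The paper's decision to organize the induction around the type of $u$ rather than the type of $w$ is precisely what lets its Case~1 apply uniformly to elliptic and hyperbolic $w$, covering the subcase your direct reconstruction leaves open.
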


\begin{proof}
  Let $w$ be an isometry with $\lr(w) =k$ and let $u,u' \in [1,w]$ be
  isometries with $\inv(u) = \inv(u')$.  By definition this means we
  can write $w = uv = u'v'$ with $\lr(w) = \lr(u) + \lr(v) = \lr(u') +
  \lr(v')$.  Because the invariant map is rank-preserving
  (Proposition~\ref{prop:op}), $\lr(u) = \lr(u')$ and the proof is by
  induction on $j=\lr(w) - \lr(u) = \lr(w)-\lr(u') = \lr(v) =
  \lr(v')$.  The base step is trivial since $w = u = u'$ when $j=0$.
  The inductive step splits into two cases.  Case 1: suppose that $u$
  and $u'$ are elliptic with $\inv(u) = \inv(u') = e^B$.  There must
  be a point $x$ in $B$ not fixed by $w$ because $j>0$ implies $w$
  either has a smaller fix-set or it is hyperbolic and fixes no points
  at all.  Since both $u$ and $u'$ fix $x$, both $v$ and $v'$ send $x$
  to $w(x)$.  As a consequence, both $v$ and $v'$ have minimal length
  factorizations that include the unique reflection $r$ sending $x$ to
  $w(x)$ (Proposition~\ref{prop:motion-refl}) and by
  Lemma~\ref{lem:rewriting} we can write $v = v_0r$ and $v' = v'_0r$.
  Thus, $u$ and $u'$ are both below $wr = uv_0 = u'v_0'$ of length
  $\lr(wr) = \lr(w)-1$ and by induction $u=u'$.  Case 2: suppose that
  $u$ and $u'$ are hyperbolic with $\inv(u)=\inv(u') = h^M$ and let $M
  = U + \mu$ be its standard form.  By
  Proposition~\ref{prop:std-split} we can write $w = t_\mu w_0$, $u =
  t_\mu u_0$ and $u' = t_\mu u_0'$ where $w_0$, $u_0$ and $u_0'$ are
  elliptics.  Since $u_0$ and $u_0'$ are below $w_0$, both $\fix(u_0)$
  and $\fix(u_0')$ contain $\fix(w_0)$ and have directions $\dir(u_0)
  = \dir(u_0') = \dir(U)^\perp$.  Since they have points in common and
  the same set of directions, $\fix(u_0) = \fix(u_0')$.  The previous
  case, applied to $w_0$ $u_0$ and $u_0'$ shows that $u_0 = u_0'$ and
  thus $u = u'$.
\end{proof}

\begin{prop}[Inverse]\label{prop:inv-order}
  If $w\in W$ is an isometry and $u, u' \in [1,w]$ are isometries with
  $\inv(u) < \inv(u')$ in $P(w)$, then $u < u'$ in $[1,w]$.
\end{prop}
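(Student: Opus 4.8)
The plan is to leverage the three preceding propositions, which together say that for \emph{any} isometry $x$ the restriction $\inv\colon [1,x]\to P(x)$ is a rank-preserving bijection. In particular this holds with $x=u'$, and by Definition~\ref{def:model} the target $P(u')$ is the set of elements of $P$ lying at or below $\inv(u')$; since $\inv(u)<\inv(u')$, this set contains $\inv(u)$.

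Before the main argument I would record the elementary fact that intervals compose: if $v\in[1,u']$ and $u'\in[1,w]$ then $v\in[1,w]$. Indeed $\lr(v)+d(v,u')=\lr(u')$ and $\lr(u')+d(u',w)=\lr(w)$, so $\lr(v)+d(v,u')+d(u',w)=\lr(w)$, and the triangle inequality $d(v,w)\le d(v,u')+d(u',w)$ together with $\lr(w)\le\lr(v)+d(v,w)$ forces $\lr(v)+d(v,w)=\lr(w)$. This lets us pass between the interval $[1,u']$ and the interval $[1,w]$.

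The argument then runs as follows. Since $u'\in[1,w]$, apply Proposition~\ref{prop:inv-surj} to the isometry $u'$: because $\inv(u)$ lies in $P(u')$, there is some $v\in[1,u']$ with $\inv(v)=\inv(u)$. By the composition fact $v\in[1,w]$, so $v$ and $u$ are two elements of $[1,w]$ with the same image under $\inv$; Proposition~\ref{prop:inv-inj} applied to $w$ gives $v=u$. Hence $u=v\in[1,u']$, i.e. $\lr(u)+d(u,u')=\lr(u')$, and combining this with $\lr(u')+d(u',w)=\lr(w)$ yields $\lr(u)+d(u,u')+d(u',w)=\lr(w)$, which is precisely the statement that $u\le u'$ in $[1,w]$. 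Finally, $\inv(u)\ne\inv(u')$ forces $u\ne u'$ (using that $\inv$ is rank-preserving, Proposition~\ref{prop:op}, or injectivity again), so $u<u'$, as desired.

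Once the three previous propositions are in hand the proof is essentially bookkeeping; the only step needing any care is the interval-composition observation, and even that is a one-line triangle-inequality computation, so no serious obstacle is anticipated. (As a corollary, combining this with Propositions~\ref{prop:op}, \ref{prop:inv-surj} and~\ref{prop:inv-inj} gives that $\inv\colon[1,w]\to P(w)$ is an isomorphism of posets.)
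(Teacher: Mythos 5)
Your proof is correct, and it takes a somewhat different route than the paper's. The paper extends the chain $e^E \leq \inv(u) < \inv(u') \leq \inv(w)$ to a maximal chain in $P(w)$ and invokes the full strength of Proposition~\ref{prop:inv-surj}: a single minimal-length factorization $w = r_1\cdots r_k$ whose suffixes realize the whole chain, so that after applying Proposition~\ref{prop:inv-inj} both $u$ and $u'$ appear as suffixes of one geodesic and $u' = r_{i+1}\cdots r_j\, u$ exhibits $u < u'$ directly. You instead use only the surjectivity consequence, but applied to the smaller interval $[1,u']$ (legitimate, since $\inv(u) \in P(u')$ because $P(u')$ consists of everything in $P$ below $\inv(u')$), producing $v \in [1,u']$ with $\inv(v) = \inv(u)$; your one-line triangle-inequality lemma that betweenness composes puts $v$ into $[1,w]$, injectivity there gives $v = u$, and the defining equation $\lr(u) + d(u,u') + d(u',w) = \lr(w)$ follows by adding the two betweenness equations, which is exactly $u \leq u'$ in $[1,w]$, with strictness from $\inv(u) \neq \inv(u')$. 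The paper's argument is more economical in that the ordering falls out of the suffix structure of one factorization with no auxiliary lemma; yours is more modular, relying only on the surjectivity statement as a black box together with general metric bookkeeping, at the cost of the (easy) interval-composition observation. Both are complete proofs.
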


\begin{proof}
  Extend the chain $e^E \leq \inv(u) < \inv(u') \leq \inv(w)$ to a
  maximal chain in $P(w)$ and apply Proposition~\ref{prop:inv-surj}.
  The result is a factorization $w = r_1r_2 \cdots r_k$ where there is
  a some suffix $w_i = r_{i+1} \cdots r_k$ with $\inv(w_i) = \inv(u')$
  and a shorter suffix $w_j = r_{j+1} \cdots r_k$ sent to $\inv(w_j) =
  \inv(u)$.  By Proposition~\ref{prop:inv-inj}, $w_i = u'$ and $w_j =
  u$.  This means that $u' = r_{i+1} \cdots r_j u$ and $u < u'$ in
  $[1,w]$.
\end{proof}

\begin{thm}[Model posets]\label{thm:model}
  For each isometry $w$, the poset structure of the interval $[1,w]$
  is isomorphic to the model poset $P(w)$.  As a consequence, the
  minimum length reflection factorizations of $w$ are in bijection
  with the maximal chains in $P(w)$.
\end{thm}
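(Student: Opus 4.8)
The plan is to assemble the theorem from the four propositions that immediately precede it. The map $\inv\colon [1,w] \to P(w)$ is well-defined and order-preserving by Proposition~\ref{prop:op} (restricted to the interval $[1,w]$, since every element of $[1,w]$ has $S$-length at most $\lr(w)$ and maps below $\inv(w)$). First I would invoke Proposition~\ref{prop:inv-surj} and Proposition~\ref{prop:inv-inj} to conclude that $\inv$ is a bijection between the underlying sets of $[1,w]$ and $P(w)$. It then remains to check that this bijection is an isomorphism of posets, i.e.\ that it reflects the order as well as preserving it. One direction is Proposition~\ref{prop:op} again: $u \le u'$ in $[1,w]$ implies $\inv(u) \le \inv(u')$ in $P(w)$, since $\inv$ is a homomorphism of posets. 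The reverse implication, $\inv(u) < \inv(u') \Rightarrow u < u'$, is exactly Proposition~\ref{prop:inv-order}. Combining these with injectivity (to handle the equality case $\inv(u)=\inv(u')$), the map $\inv$ is an order isomorphism $[1,w] \xrightarrow{\ \sim\ } P(w)$.

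For the consequence about factorizations, I would recall the general principle stated back in Section~\ref{sec:intervals}: in a bounded graded poset arising as an interval $[1,w]$ in a Cayley graph, the maximal (saturated) chains from $\zer$ to $\one$ correspond bijectively to the minimal length factorizations of $w$ in the generating set. Concretely, a minimal length reflection factorization $w = r_1 r_2 \cdots r_k$ yields the chain of suffixes $1 = r_k^{-1}\cdots = \cdots \lessdot r_{k-1}r_k \lessdot \cdots \lessdot r_2\cdots r_k \lessdot r_1 r_2 \cdots r_k = w$ inside $[1,w]$, and conversely each covering relation in a maximal chain of $[1,w]$ is labelled by a reflection, reading off a factorization. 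Under the isomorphism just established, maximal chains in $[1,w]$ correspond to maximal chains in $P(w)$ from $e^E$ to $\inv(w)$, which is the claimed bijection. In fact Proposition~\ref{prop:inv-surj} already supplies the explicit correspondence in one direction — each maximal chain in $P(w)$ is realized by a factorization whose suffixes map onto the chain — and Proposition~\ref{prop:inv-inj} guarantees that the factorization is recovered uniquely, so this step is essentially bookkeeping.

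The bulk of the work is therefore already done in the preceding propositions, and I do not anticipate a genuine obstacle here; the theorem is a packaging statement. The one point that deserves a sentence of care is the passage from "order-isomorphism of posets" to "bijection on maximal chains": one must note that an isomorphism of graded posets carries saturated chains to saturated chains and endpoints to endpoints, so that the maximal chains from $\zer_{[1,w]}$ to $\one_{[1,w]}$ match exactly the maximal chains from $e^E$ to $\inv(w)$ in $P(w)$. Given that $[1,w]$ is bounded and graded (as established earlier) and that $\inv$ is rank-preserving by Proposition~\ref{prop:op}, this is automatic. The proof is then complete.
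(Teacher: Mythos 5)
Your proposal matches the paper's proof: the paper likewise combines Propositions~\ref{prop:op}, \ref{prop:inv-surj} and~\ref{prop:inv-inj} to get an order-preserving bijection and then cites Proposition~\ref{prop:inv-order} for the inverse, with the chain--factorization correspondence following as in your final paragraph. Your write-up is correct and essentially identical in approach, just slightly more explicit about the bookkeeping.
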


\begin{proof}
  By Propositions~\ref{prop:op}, \ref{prop:inv-surj}
  and~\ref{prop:inv-inj} the invariant map is an order-preserving
  bijection from $[1,w]$ to $P(w)$ and by
  Proposition~\ref{prop:inv-order} its inverse is also
  order-preserving.
\end{proof}

\section{Intervals and lattices}

As mentioned in the introduction, a discussion of whether or not the
intervals in $W$ are lattices is included here because these results
are needed elsewhere.  By Theorem~\ref{thm:model} this reduces to the
question of which model posets are lattices. The elliptic case is
straightforward.

\begin{thm}[Elliptic posets are lattices]
  For each affine subspace $B$ in $E$, the elliptic poset $P^B$ is a
  complete lattice.
\end{thm}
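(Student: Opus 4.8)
The plan is to reduce the claim to a known structural fact and a general lattice-theoretic observation. By Proposition~\ref{prop:ell-str} the elliptic poset $P^B$ is isomorphic to $\lin(U)$ where $U = \dir(B)^\perp$, and by Definition~\ref{def:lin} the poset $\lin(U)$ of linear subspaces of a finite-dimensional inner product space is graded, bounded, and a complete lattice. So the proof is essentially a one-line invocation: $P^B \cong \lin(U)$ and $\lin(U)$ is a complete lattice, hence so is $P^B$.

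If one wants to make the argument more self-contained, I would spell out why $\lin(U)$ is a complete lattice directly rather than leaning on the assertion in Definition~\ref{def:lin}. Given any family $\{U_i\}$ of linear subspaces of $U$, their intersection $\bigcap_i U_i$ is again a linear subspace and is clearly the greatest lower bound, and their span $\spn(\bigcup_i U_i)$ is the least upper bound; thus arbitrary meets and joins exist. Alternatively, I can invoke the last sentence of Definition~\ref{def:lattice}: $\lin(U)$ is a bounded graded lattice (bounded by the zero subspace and $U$, graded by dimension), and every bounded graded lattice is complete. Either route is routine.

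The only point requiring a small amount of care is the passage from $P^B$ to $P^{\ms(w)}$ or $P(w)$ for an actual elliptic isometry $w$: one should note that for an elliptic $w$, $P(w) = P^{\fix(w)}$ (Definition~\ref{def:model}), so the theorem as stated about $P^B$ for an arbitrary affine subspace $B$ already covers the interval $[1,w]$ via Theorem~\ref{thm:model}. But since the theorem is phrased purely in terms of $P^B$, even this remark is unnecessary.

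Honestly, there is no real obstacle here; the substantive content was already extracted in Proposition~\ref{prop:ell-str}, and this theorem is just recording its consequence. The ``hard part,'' such as it is, is merely choosing how much of the standard fact about $\lin(U)$ to reprove versus cite — and given that Definition~\ref{def:lin} already states it, a citation to that definition (or a two-sentence direct verification with meets as intersections and joins as spans) suffices.
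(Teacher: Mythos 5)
Your proof matches the paper's: the paper also derives the theorem immediately from Proposition~\ref{prop:ell-str}, with the completeness of $\lin(U)$ already recorded in Definition~\ref{def:lin}. Your extra verification that meets are intersections and joins are spans is fine but not needed.
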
 

\begin{proof}
  This is an immediate consequence of Proposition~\ref{prop:ell-str}.
\end{proof}

The hyperbolic posets are usually not lattices.  To prove this, we
begin by quoting a definition and a proposition from \cite{BrMc10}.

\begin{defn}[Bowtie]\label{def:bowtie}
  We say that a poset $P$ contains a \emph{bowtie} if there exists a
  $4$-tuple $(a,b:c,d)$ of distinct elements such that $a$ and $b$ are
  minimal upper bounds for $c$ and $d$ and $c$ and $d$ are maximal
  lower bounds for $a$ and $b$.  The name reflects the fact that when
  edges are drawn to show that $a$ and $b$ are above $c$ and $d$, the
  configuration looks like a bowtie.  See
  Figure~\ref{fig:non-lattice}.
\end{defn}

\begin{figure}
  \begin{tikzpicture}
    \coordinate (one) at (0,1.2);
    \coordinate (zero) at (0,-1.2);
    \coordinate (a) at (-.6,.4);
    \coordinate (b) at (.6,.4);
    \coordinate (c) at (-.6,-.4);
    \coordinate (d) at (.6,-.4);
    \draw[-,thick] (b)--(one)--(a)--(c)--(zero)--(d);
    \draw[-,thick] (a)--(d)--(b)--(c);
    \fill (one) circle (.6mm) node[anchor=south] {$1$};
    \fill (a) circle (.6mm) node[anchor=east] {$a$};
    \fill (b) circle (.6mm) node[anchor=west] {$b$};
    \fill (c) circle (.6mm) node[anchor=east] {$c$};
    \fill (d) circle (.6mm) node[anchor=west] {$d$};
    \fill (zero) circle (.6mm) node[anchor=north] {$0$};
  \end{tikzpicture}
  \caption{A bounded graded poset that is not a
    lattice.\label{fig:non-lattice}}
\end{figure}
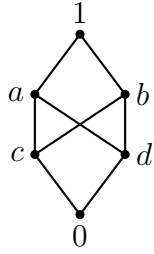

\begin{prop}[Lattice or bowtie]\label{prop:lattice-bowties}
  A bounded graded poset $P$ is a lattice iff $P$ contains no bowties.
\end{prop}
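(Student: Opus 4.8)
The plan is to prove both implications, exploiting the fact that the bowtie condition of Definition~\ref{def:bowtie} is self-dual: a $4$-tuple $(a,b:c,d)$ is a bowtie in $P$ exactly when $(c,d:a,b)$ is a bowtie in $P^*$, and for bounded graded posets $P$ is a lattice iff $P^*$ is. So it suffices to prove ``lattice $\Rightarrow$ no bowties'' directly, and ``no bowties $\Rightarrow$ every pair has a join'' for all bounded graded posets, applying the latter also to $P^*$ to recover meets.

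For the forward direction, suppose $P$ is a lattice and, for contradiction, that $(a,b:c,d)$ is a bowtie. Then $c\join d$ exists and is the unique least upper bound of $\{c,d\}$, hence the unique minimal upper bound of $\{c,d\}$; but $a$ and $b$ are two distinct minimal upper bounds of $\{c,d\}$, a contradiction. Thus a lattice has no bowties.

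For the reverse direction, assume $P$ has no bowties and fix $u,v\in P$. First I would record the consequence of boundedness and gradedness that the rank function gives $P$ finite height, so that: (i) the set of upper bounds of any pair is nonempty (it contains $\one$) and every upper bound dominates a minimal upper bound; and dually (ii) the set of lower bounds of any pair is nonempty (it contains $\zer$) and every lower bound is dominated by a maximal lower bound. By (i) it is enough to show $\{u,v\}$ has a \emph{unique} minimal upper bound, for then every upper bound lies above it and it is the join. So suppose instead that $a\neq b$ are both minimal upper bounds of $\{u,v\}$. Since $a,b$ are then upper bounds of $\{u,v\}$, the elements $u$ and $v$ are lower bounds of $\{a,b\}$; using (ii), choose a maximal lower bound $c$ of $\{a,b\}$ with $c\geq u$ and a maximal lower bound $d$ of $\{a,b\}$ with $d\geq v$. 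I claim $(a,b:c,d)$ is a bowtie. By construction $c$ and $d$ are maximal lower bounds of $\{a,b\}$. For the other clause: any upper bound of $\{c,d\}$ is an upper bound of $\{u,v\}$ because $u\leq c$ and $v\leq d$; since $a$ and $b$ are upper bounds of $\{c,d\}$ (as $c,d\leq a$ and $c,d\leq b$), and any upper bound of $\{c,d\}$ lying below $a$ would be an upper bound of $\{u,v\}$ below the minimal upper bound $a$ and hence equal to $a$, we conclude $a$ is a minimal upper bound of $\{c,d\}$, and likewise $b$. Finally the four elements are pairwise distinct: $a\neq b$ by assumption, and if any other pair coincided then chasing $c,d\leq a$ and $c,d\leq b$ would force $a\leq b$ or $b\leq a$, hence $a=b$ by minimality. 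This bowtie contradicts the hypothesis, so $\{u,v\}$ has a unique minimal upper bound $u\join v$; applying the same argument in $P^*$ gives $u\meet v$, so $P$ is a lattice.

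The routine-but-delicate step, which I would write out most carefully, is verifying that the constructed $4$-tuple genuinely satisfies every clause of Definition~\ref{def:bowtie} — in particular that $a$ and $b$ are \emph{minimal} (not merely arbitrary) upper bounds of $\{c,d\}$, and that all four elements are pairwise distinct. Finiteness enters only through the finite-height observation used in (i) and (ii) to extract minimal upper bounds and maximal lower bounds; no stronger finiteness assumption is required.
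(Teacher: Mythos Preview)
The paper does not prove this proposition; it is quoted without proof from \cite{BrMc10}, so there is no ``paper's own proof'' to compare against.  Your argument is correct and is essentially the standard one.  The forward direction is immediate, and for the reverse direction your use of finite height (which does follow from bounded $+$ graded under Definition~\ref{def:poset}) to extract minimal upper bounds and maximal lower bounds, together with the promotion of $u,v$ to $c,d$, is exactly right.

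The only spot to tighten is the distinctness check.  Your phrase ``chasing $c,d\leq a$ and $c,d\leq b$ would force $a\leq b$ or $b\leq a$'' handles the four mixed cases $a=c$, $a=d$, $b=c$, $b=d$ cleanly, but it does not literally cover $c=d$: from $c\leq a$ and $c\leq b$ alone you do not get comparability of $a$ and $b$.  Instead, invoke what you already proved in the previous clause: if $c=d$ then $a$ and $b$ are minimal upper bounds of the singleton $\{c\}$, whose unique minimal upper bound is $c$ itself, so $a=c=b$, contradicting $a\neq b$.  With that one-line patch the proof is complete.
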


Thus one only needs to determine whether $P^M$ contains any bowties.
Note that maximal lower bounds are easy to calculate.

\begin{rem}[Maximal lower bounds]\label{rem:max-lower}
  Let $M$ be a nonlinear affine subspace of $V$.  Most pairs of
  elements in $P^M$ have a unique maximal lower bound.  For example,
  $e^{B_1} \meet e^{B_2} = e^C$ where $C$ is the affine hull of $B_1
  \cup B_2 \subset E$ and $h^{M_1} \meet e^{B_1} = e^C$ where $C$ is
  the unique affine subspace containing $B_1$ with $\dir(C)$ equal to
  the smallest linear subspace of $V$ containing $\dir(B_1)$ and
  $M_1^\perp$.  In other words, $\dir(C) = \spn(\dir(B_1) \cup
  M_1^\perp)$.  Also, so long as $M_3 = M_1 \cap M_2$ is nonempty,
  $h^{M_1} \meet h^{M_2} = h^{M_3}$.
\end{rem}

The only case not mentioned in Remark~\ref{rem:max-lower} is the
following one.

\begin{prop}[Distinct maximal lower bounds]\label{prop:max-lower}
  If $M$ is a nonlinear affine subspace of $V$ and two elements in
  $P^M$ do not have a unique maximal lower bound, then they are
  hyperbolic elements $h^{M_1}$ and $h^{M_2}$ with $M_1$ and $M_2$
  disjoint and their maximal lower bounds are elliptic elements of the
  form $e^B$ where $\dir(B)^\perp = \dir(M_1) \cap \dir(M_2)$.
\end{prop}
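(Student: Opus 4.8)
The plan is to analyze the structure of maximal lower bounds in $P^M$ by working through the possible types of the two elements, using the explicit formulas recorded in Remark~\ref{rem:max-lower}. Since that remark already handles every pair of elements \emph{except} a pair of hyperbolic elements $h^{M_1}, h^{M_2}$ with $M_1 \cap M_2 = \emptyset$, the bulk of the work is to establish the claim in precisely this remaining case: that the maximal lower bounds are elliptic elements $e^B$ with $\dir(B)^\perp = \dir(M_1) \cap \dir(M_2)$.

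First I would fix a pair of hyperbolic elements $h^{M_1}$ and $h^{M_2}$ in $P^M$ with $M_1 \cap M_2 = \emptyset$. A lower bound for this pair cannot be a hyperbolic element $h^{M_3}$, since $h^{M_3} \leq h^{M_i}$ forces $M_3 \subset M_i$, and $M_3 \subset M_1 \cap M_2 = \emptyset$ is impossible. So every lower bound is elliptic, say $e^B$, and the defining relation $e^B < h^{M_i}$ reads $M_i^\perp = \spn(M_i)^\perp \subset \dir(B)$ for $i=1,2$. Hence $e^B$ is a lower bound exactly when $\dir(B) \supset \spn(M_1^\perp \cup M_2^\perp) = \spn(M_1)^\perp + \spn(M_2)^\perp$, and taking orthogonal complements this says $\dir(B)^\perp \subset \spn(M_1) \cap \spn(M_2)$. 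The maximal such elliptics are those for which $\dir(B)^\perp$ is as large as possible, i.e.\ equal to $\spn(M_1) \cap \spn(M_2)$, together with the additional requirement that $e^B \leq h^M$, which imposes $\dir(B)^\perp \subset \spn(M)$; but since $M_1, M_2 \subset M$ (as $h^{M_i} \leq h^M$) we have $\spn(M_1) \cap \spn(M_2) \subset \spn(M)$ automatically. It then remains to check that $\spn(M_1) \cap \spn(M_2)$ actually equals $\dir(M_1) \cap \dir(M_2)$ under the disjointness hypothesis — this is the dimension-counting heart of the argument. Writing $M_i = U_i + \mu_i$ in standard form with $U_i = \dir(M_i)$, disjointness of $M_1$ and $M_2$ is equivalent to saying the affine subspaces do not meet, which (since $\spn(M_i) = U_i \oplus L_i$ for a line $L_i$ containing $\mu_i$) forces $U_1 \cap U_2 = \spn(M_1) \cap \spn(M_2)$: any vector in both spans but in neither direction space would produce a common point of $M_1$ and $M_2$. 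I would make this precise by checking that $\spn(M_i) \setminus U_i$ is exactly the union of the affine hyperplanes parallel to $U_i$ at ``height'' $\mu_i$ in the $L_i$ direction, scaled — more simply, a point $p \in M_1 \cap M_2$ exists iff $\mu_1 - \mu_2 \in U_1 + U_2$, and when this fails the only vectors common to $\spn(M_1)$ and $\spn(M_2)$ lie in $U_1 \cap U_2$.

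Next I would verify that such maximal lower bounds $e^B$ genuinely exist and that there is more than one of them, so that the pair $h^{M_1}, h^{M_2}$ does not have a \emph{unique} maximal lower bound. Existence: given the linear subspace $K := U_1 \cap U_2$, pick any affine subspace $B$ of $E$ with $\dir(B) = K^\perp$; there is at least a one-parameter family of these (one for each point of a fixed transversal), all satisfying the lower-bound conditions derived above, and all incomparable to one another in $P^M$ since two distinct parallel affine subspaces with the same direction space are incomparable under reverse inclusion. This shows the maximal lower bounds form a nontrivial incomparable family, confirming the failure of uniqueness and pinning down $\dir(B)^\perp = K = \dir(M_1) \cap \dir(M_2)$ as stated.

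Finally I would assemble the proof by combining these observations with Remark~\ref{rem:max-lower}: if two elements of $P^M$ fail to have a unique maximal lower bound, they cannot be among the pairs covered by that remark (each of which has a \emph{unique} maximal lower bound given by an explicit formula), so they must be two hyperbolic elements $h^{M_1}, h^{M_2}$ with $M_1 \cap M_2$ empty, and the analysis above identifies the maximal lower bounds. The step I expect to be the main obstacle is the clean verification that disjointness of the affine subspaces $M_1$ and $M_2$ is exactly what forces $\spn(M_1) \cap \spn(M_2) = \dir(M_1) \cap \dir(M_2)$; the subtlety is that $\spn(M_i)$ is one dimension larger than $\dir(M_i)$, so one must rule out the ``diagonal'' possibility that the two extra dimensions conspire to share a vector outside $U_1 \cap U_2$, and this is precisely where the hypothesis $M_1 \cap M_2 = \emptyset$ (rather than merely $U_1 \neq U_2$) is used. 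Everything else is bookkeeping with the order relation of Definition~\ref{def:global} and a dimension count.
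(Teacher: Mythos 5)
Your reduction to the case of two hyperbolic elements with disjoint move-sets, and your description of the lower bounds as the elliptics $e^B$ with $\dir(B)^\perp \subset \spn(M_1)\cap\spn(M_2)$, follow the paper. But there is a genuine gap at the step you yourself call the heart of the argument: the claim that disjointness of $M_1$ and $M_2$ \emph{alone} forces $\spn(M_1)\cap\spn(M_2)=\dir(M_1)\cap\dir(M_2)$ is false. In $\R^3$ with orthonormal basis $\epsilon_1,\epsilon_2,\epsilon_3$, take $M_1=\R\epsilon_2+\epsilon_1$ and $M_2=\R\epsilon_3+2\epsilon_1$. These are disjoint (indeed $\mu_1-\mu_2=-\epsilon_1\notin U_1+U_2$), yet $\spn(M_1)\cap\spn(M_2)=\R\epsilon_1$ while $U_1\cap U_2=\{0\}$. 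The flaw in ``any vector in both spans but in neither direction space would produce a common point'' is that such a vector is written $u_1+c_1\mu_1=u_2+c_2\mu_2$ with possibly $c_1\neq c_2$, so the rescaling that puts it into $M_1$ need not put it into $M_2$; disjointness by itself does not rule out this diagonal vector, as the example shows.

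What rescues the step is the hypothesis you mention only in passing (to verify $e^B\leq h^M$) and never use where it is needed: both $M_1$ and $M_2$ are subsets of the single nonlinear affine subspace $M$, since $h^{M_i}\leq h^M$. On $\spn(M)$ there is a linear functional that is identically $1$ on $M$ (hence on $M_1$ and $M_2$) and vanishes on $\dir(M)$; evaluating it on $u_1+c_1\mu_1=u_2+c_2\mu_2$ gives $c_1=c_2$, so any vector of $\spn(M_1)\cap\spn(M_2)$ lying outside $\dir(M)$ rescales to a point of $M_1\cap M_2$, contradicting disjointness. This is exactly the paper's argument: $\spn(M_1)\cap\spn(M_2)\subset\dir(M)$ and $\spn(M_i)\cap\dir(M)=\dir(M_i)$, which yields the stated identity; with that repair the rest of your proof goes through. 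Two minor points: your final paragraph on the existence of several maximal lower bounds is not needed, since the proposition is a conditional statement, and as written it fails in the edge case $\dir(M_1)\cap\dir(M_2)=\{0\}$, where $\dir(B)=V$ forces $B=E$ and the maximal lower bound $e^E$ is unique.
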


\begin{proof}
  The cases other than two hyperbolics $h^{M_1}$ and $h^{M_2}$ with
  $M_1$ and $M_2$ are disjoint are ruled out by
  Remark~\ref{rem:max-lower}.  The common lower bounds for these two
  are elliptic elements of the form $e^B$ where $\dir(B)^\perp$ is
  contained in $\spn(M_1) \cap \spn(M_2)$ and $e^B$ is maximal when
  $\dir(B)^\perp = \spn(M_1) \cap \spn(M_2)$, but this expression
  simplifies.  Because $M_1$ and $M_2$ are disjoint subsets of $M$,
  $\spn(M_1)$ and $\spn(M_2)$ only intersect inside the linear
  subspace $\dir(M)$.  Also $\spn(M_i) \cap \dir(M) = \dir(M_i)$, so
  we have $\spn(M_1) \cap \spn(M_2) = \dir(M_1) \cap \dir(M_2)$.
\end{proof}

\begin{thm}[Hyperbolic posets are not lattices]\label{thm:hyp-bowtie}
  Let $M$ be a nonlinear affine subspace of $V$.  The poset $P^M$
  contains a bowtie and is not a lattice iff $\dir(M)$ contains a
  proper non-trivial linear subspace $U$, which is true iff the
  dimension of $M$ is at least $2$.  More precisely, for every such
  subspace and for every choice of distinct elements $h^{M_1}$ and
  $h^{M_2}$ with $\dir(M_1) = \dir(M_2) = U$ and distinct elements
  $e^{B_1}$ and $e^{B_2}$ with $\dir(B_1) = \dir(B_2) = U^\perp$,
  these four elements form a bowtie.  Conversely, all bowties in $P^M$
  are of this form.
\end{thm}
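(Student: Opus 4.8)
The plan is to begin from Proposition~\ref{prop:lattice-bowties}, which converts ``$P^M$ is not a lattice'' into ``$P^M$ contains a bowtie'', and to dispose at once of the purely linear-algebraic equivalence: $\dim(\dir(M)) = \dim(M)$, and a real vector space has a proper nontrivial linear subspace exactly when its dimension is at least $2$, so the last two conditions in the statement coincide and everything reduces to matching them with the existence of a bowtie and classifying all bowties. Before the two main arguments I would isolate one geometric fact, to be used repeatedly: if $M' \subseteq M$ then $\spn(M') \cap \dir(M) = \dir(M')$. Writing $M = \dir(M) + \mu$ with $\mu \in \dir(M)^\perp \setminus \{\zer\}$, every point of $M'$ has the form $\lambda + \mu$ with $\lambda \in \dir(M)$, so a linear combination of points of $M'$ lies in $\dir(M)$ only when its coefficients sum to $0$, in which case it is a combination of difference vectors of $M'$ and hence lies in $\dir(M')$. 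I would also record that $M' \subseteq M$ forces $M'$ to be nonlinear (since $\zer \notin M$) and, if moreover $M' \neq M$, that $\dim(M') < \dim(M)$.

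For the ``if'' direction and the forward half of the ``more precisely'' clause, fix a proper nontrivial $U \subset \dir(M)$ and choose $h^{M_1}, h^{M_2}, e^{B_1}, e^{B_2}$ as in the statement. All four lie in $P^M$: $M_i \subseteq M$ handles the hyperbolic elements, and $M^\perp \subseteq \dir(M)^\perp \subseteq U^\perp = \dir(B_i)$ handles the elliptic ones; they are distinct because a hyperbolic element is never an elliptic element and $M_1 \neq M_2$, $B_1 \neq B_2$. Since $M_1$ and $M_2$ have a common space of directions but are distinct they are disjoint, so the lower bounds of $h^{M_1}$ and $h^{M_2}$ in $P^M$ are exactly the elliptic $e^B$ with $\dir(B)^\perp \subseteq \spn(M_1) \cap \spn(M_2)$, and $\spn(M_1) \cap \spn(M_2) = \dir(M_1) \cap \dir(M_2) = U$ by the argument in the proof of Proposition~\ref{prop:max-lower}; hence the maximal lower bounds of $h^{M_1}, h^{M_2}$ are precisely the $e^B$ with $\dir(B)^\perp = U$, and $e^{B_1}, e^{B_2}$ are among these. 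On the other side, $e^{B_1}$ and $e^{B_2}$ are disjoint, so they admit no common elliptic upper bound, while any hyperbolic $h^{M'} \in P^M$ above both satisfies $U \subseteq \spn(M')$, whence $M' \subseteq M$ together with the isolated fact gives $U \subseteq \dir(M')$, so $\dim(M') \geq \dim(U)$ and $h^{M'}$ has rank at least $\dim(U)+2$ (Theorem~\ref{thm:lr}). Thus the set of upper bounds of $\{e^{B_1}, e^{B_2}\}$ contains no element of rank below $\dim(U)+2$, so the distinct elements $h^{M_1}$ and $h^{M_2}$, which do lie above $e^{B_1}$ and $e^{B_2}$ and have rank exactly $\dim(U)+2$, are minimal upper bounds. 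Therefore $(h^{M_1}, h^{M_2}: e^{B_1}, e^{B_2})$ is a bowtie; running this with $U$ a line (which exists once $\dim(M) \geq 2$) shows a bowtie is present in that case.

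For the converse, let $(a,b:c,d)$ be an arbitrary bowtie in $P^M$. Since $c \neq d$ are maximal lower bounds of $a$ and $b$, Proposition~\ref{prop:max-lower} forces $a = h^{M_1}$, $b = h^{M_2}$ with $M_1$, $M_2$ disjoint, and $c = e^{B_1}$, $d = e^{B_2}$ with $\dir(B_1)^\perp = \dir(B_2)^\perp = U := \dir(M_1)\cap\dir(M_2)$. If $\dir(M_1)$ strictly contained $U$, I would pick a translate $M'$ of $U$ lying inside $M_1$ (possible since $\dim(U) < \dim(M_1)$); then $h^{M'} \in P^M$, it lies above $e^{B_1}$ and $e^{B_2}$ because $U = \dir(M') \subseteq \spn(M')$, and $h^{M'} < h^{M_1} = a$, contradicting the minimality of $a$ among upper bounds of $\{c,d\}$. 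Hence $\dir(M_1) = U$, and symmetrically $\dir(M_2) = U$. Finally $a \neq \one = h^M$, since a minimal upper bound of $\{c,d\}$ cannot be the top element (the distinct upper bound $b$ would then lie strictly below it); thus $M_1 \subsetneq M$ and $U = \dir(M_1) \subsetneq \dir(M)$ is proper, and $U$ is nontrivial because $U = \{\zer\}$ would force $\dir(B_1) = \dir(B_2) = V$, hence $B_1 = B_2 = E$ and $c = d$. This is exactly the asserted form, and it simultaneously exhibits a proper nontrivial subspace of $\dir(M)$, closing the equivalence.

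The main obstacle I anticipate is the minimal-upper-bound half of the ``if'' direction: establishing that \emph{every} hyperbolic element of $P^M$ lying above both $e^{B_1}$ and $e^{B_2}$ has space of directions containing $U$, which is precisely where the identity $\spn(M') \cap \dir(M) = \dir(M')$ is needed; once it is available the remainder is routine bookkeeping with the grading and with Remark~\ref{rem:max-lower} and Proposition~\ref{prop:max-lower}.
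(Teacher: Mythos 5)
Your proposal is correct and takes essentially the same route as the paper: the forward implication is the direct verification the paper dismisses as ``easy to check,'' and the converse runs through Proposition~\ref{prop:max-lower} and then uses minimality of $a$ and $b$ to force $\dir(M_1)=\dir(M_2)=U$, exactly as in the paper's proof. The details you add (the identity $\spn(M')\cap\dir(M)=\dir(M')$ for $M'\subseteq M$, and the translate-of-$U$ argument for minimality) are correct fillings-in of steps the paper leaves implicit.
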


\begin{proof}
  Since it is easy to check that the elements listed form a bowtie, we
  focus on establishing the converse.  If $a$ and $c$ are elements in
  $P^M$ with distinct maximal lower bounds then by
  Proposition~\ref{prop:max-lower} they are both hyperbolic with
  disjoint move-sets, say $a = h^{M_1}$ and $b = h^{M_2}$ with $M_1$
  and $M_2$ disjoint.  Moreover, their distinct maximal lower bounds
  are elliptic elements of the form $e^B$ where $\dir(B)^\perp =
  \dir(M_1) \cap \dir(M_2) = U$.  Let $c = e^{B_1}$ and $d= e^{B_2}$
  be two such elements with $B_1$ and $B_2$ distinct and note that
  because $\dir(B_1) = \dir(B_2)$, distinct implies disjoint.  If we
  further assume that $a$ and $b$ are minimal upper bounds for $c$ and
  $d$ then we can conclude that $\dir(M_1) = \dir(M_2) = U$.  The
  distinctness of $a$ and $b$ implies $U$ is a proper subspace of
  $\dir(M)$ and the distinctness of $c$ and $d$ implies $B_i$ is a
  proper subspace of $E$, $\dir(B_i) = U^\perp$ is a proper subspace
  of $V$, and $U$ is nontrivial.
\end{proof}

\section{Lattice completions}\label{sec:complete}

It is well-known that Dedekind used a method of cuts to complete the
rationals to the reals.  Less well-known is that H.~M.~MacNeille was
able to generalize this technique of ``Dedekind cuts'' to show that
every partially ordered set embeds in a complete lattice in an
essentially unique and minimal way.  The resulting complete lattice is
called its Dedekind-MacNeille completion.  We begin by reviewing its
construction as described in \cite{DaPr02} and then apply these
results to the hyperbolic posets that fail to be lattices.

\begin{defn}[Dedekind-MacNeille completion]
  Let $P$ be a poset and for any subset $Q$ in $P$, let $Q^u$ and
  $Q^\ell$ denote the set of upper bounds for $Q$ and lower bounds for
  $Q$, respectively.  The \emph{Dedekind-MacNeille completion} of $P$
  is the collection of subsets $Q$ of $P$ satisfying $Q=(Q^u)^\ell$
  ordered by set inclusion.
\end{defn}

\begin{thm}[Properties of $\DM(P)$]
  For any poset $P$, its Dedekind-MacNeille completion $\DM(P)$ is a
  complete lattice.  Moreover, there is an order-preserving embedding
  $\varphi\colon P\to \DM(P)$ of $P$ into its Dedekind-MacNeille
  completion given by sending each element $p$ of $P$ to $\{p\}^\ell$.
\end{thm}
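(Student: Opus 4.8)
The plan is to recognise the map $c\colon Q \mapsto (Q^u)^\ell$ on subsets of $P$ as a closure operator and then invoke the standard fact that the closed sets of a closure operator, ordered by inclusion, form a complete lattice. The elements of $\DM(P)$ are by definition precisely the sets fixed by $c$, so identifying $\DM(P)$ with the lattice of closed sets of $c$ does all the work.

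First I would record the elementary properties of the two operations $Q\mapsto Q^u$ and $Q\mapsto Q^\ell$. Both are \emph{order-reversing} (if $Q_1\subseteq Q_2$ then $Q_1^u\supseteq Q_2^u$ and $Q_1^\ell\supseteq Q_2^\ell$) and the two compositions are \emph{extensive} ($Q\subseteq (Q^u)^\ell$ and $Q\subseteq (Q^\ell)^u$); all four assertions are immediate from the definitions. Combining these yields the collapse identity $Q^u = ((Q^u)^\ell)^u$, since the inclusion $Q\subseteq (Q^u)^\ell$ gives $((Q^u)^\ell)^u\subseteq Q^u$ by order-reversal, while extensivity applied to the set $Q^u$ gives $Q^u\subseteq ((Q^u)^\ell)^u$. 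From this it follows that $c$ is monotone, extensive and idempotent, i.e. a closure operator, and that its closed sets are exactly the members of $\DM(P)$.

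Next I would verify the lattice structure. An arbitrary intersection $\bigcap_i Q_i$ of closed sets is closed: extensivity gives $\bigcap_i Q_i \subseteq c(\bigcap_i Q_i)$, and applying the monotone $c$ to each inclusion $\bigcap_i Q_i\subseteq Q_j$ and using $c(Q_j)=Q_j$ gives $c(\bigcap_i Q_i)\subseteq Q_j$ for every $j$, hence the reverse inclusion. Therefore in $\DM(P)$ every family $\{Q_i\}$ has a meet, namely $\bigcap_i Q_i$, and consequently also a join, namely the intersection of all closed sets containing $\bigcup_i Q_i$ (equivalently $c(\bigcup_i Q_i)$). In particular $\DM(P)$ is a complete lattice, with $\zer = c(\emptyset)$ and $\one = P$.

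Finally, for the embedding, I would check that $\varphi(p)=\{p\}^\ell$ indeed lies in $\DM(P)$, which is the collapse identity instantiated at $Q=\{p\}$ since $((\{p\}^\ell)^u)^\ell$ equals $\{p\}^{\ell}$ by applying the identity to $\{p\}^\ell$... more directly, $\{p\}^{\ell u\ell}=\{p\}^\ell$ follows from $Q^u=Q^{u\ell u}$ with $Q=\{p\}^\ell$ together with extensivity. Then $\varphi$ both preserves order (if $p\leq q$ then every lower bound of $p$ is a lower bound of $q$, so $\{p\}^\ell\subseteq\{q\}^\ell$) and reflects it (since $p\in\{p\}^\ell$, any inclusion $\{p\}^\ell\subseteq\{q\}^\ell$ forces $p\in\{q\}^\ell$, i.e. $p\leq q$), and an order-preserving, order-reflecting map is automatically injective. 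The only step that requires any care is the closure-operator bookkeeping of the second paragraph; once the three defining properties of $c$ are in hand, everything else is formal.
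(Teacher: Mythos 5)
Your argument is correct, but note that the paper does not prove this statement at all: it is quoted as a standard result from Davey and Priestley \cite{DaPr02}, so there is no internal proof to compare against. What you have written is the standard textbook argument, and it is essentially complete: $Q\mapsto (Q^u)^\ell$ is a closure operator (monotone, extensive, idempotent via the collapse identity $Q^u=Q^{u\ell u}$), its closed sets are exactly the elements of $\DM(P)$, arbitrary intersections of closed sets are closed so arbitrary meets exist (and joins are obtained as $c$ of unions), giving a complete lattice with $\one = P$ and $\zer = c(\emptyset)$; and $\varphi$ is order-preserving, order-reflecting (using $p\in\{p\}^\ell$), hence injective. The one spot where your write-up wobbles is the verification that $\{p\}^\ell$ is closed: what you actually need is the \emph{dual} collapse identity $Q^\ell = Q^{\ell u\ell}$ applied to $Q=\{p\}$, not the identity $Q^u=Q^{u\ell u}$ instantiated at $Q=\{p\}^\ell$ (that substitution yields $\{p\}^{\ell u}=\{p\}^{\ell u\ell u}$, which is not the statement required). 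The dual identity is proved by exactly the same two lines you already gave, namely applying the order-reversing map $(\cdot)^\ell$ to the extensivity inclusion $\{p\}\subseteq\{p\}^{\ell u}$ and combining with extensivity of $c$ at $\{p\}^\ell$, so the gap is purely one of bookkeeping and is easily repaired.
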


The Dedekind-MacNeille completion of a poset $P$ can be difficult to
construct from the given definition, particularly when $P$ is
infinite, but there is a characterization theorem which enables one to
recognize $DM(P)$ once it has been constructed by other means.

\begin{defn}[Join-dense and Meet-dense]
  Let $Q$ be a subset of a poset $P$.  We say that $Q$ is
  \emph{join-dense} in $P$ if every element of $P$ is the join of some
  subset of $Q$.  Similarly, $Q$ is \emph{meet-dense} in $P$ if every
  element of $P$ is the meet of some subset of $Q$.
\end{defn}

The following is a restatement of Theorem~2.36 in \cite{DaPr02}.

\begin{thm}[Characterizing $\DM(P)$]\label{thm:dm-char}
  Let $P$ be an ordered set and let $\varphi\colon P\to \DM(P)$ be the
  order-embedding of $P$ into its Dedekind-MacNeille completion
  defined above.  The image of $P$ under $\varphi$ is join-dense and
  meet-dense in the complete lattice $\DM(P)$.  Conversely, if $L$ is
  a complete lattice and $P$ is a subset of $L$ which is both
  join-dense and meet-dense in $L$, then $L$ is order-isomorphic to
  $\DM(P)$ via an order-isomorphism which agrees with $\varphi$ on
  $P$.
\end{thm}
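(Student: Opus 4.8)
The final statement to prove is Theorem~\ref{thm:dm-char}, the characterization of the Dedekind--MacNeille completion: the image of $P$ under $\varphi$ is join-dense and meet-dense in $\DM(P)$, and conversely any complete lattice $L$ containing $P$ as a join-dense and meet-dense subset is order-isomorphic to $\DM(P)$ compatibly with $\varphi$.

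Since this is explicitly flagged as a restatement of Theorem~2.36 in \cite{DaPr02}, the honest "proof" is a pointer to that reference together with a sketch of the standard argument, which I would present as follows.

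The plan is to recall the standard polarity argument. First I would verify that $\varphi(P)$ is join-dense and meet-dense in $\DM(P)$: for an arbitrary element $A \in \DM(P)$, meaning $A = (A^u)^\ell$, one checks directly that $A = \bigvee\{\{p\}^\ell \mid p \in A\}$ in the completion, since the join in $\DM(P)$ of a family of cuts is $((\bigcup_i A_i)^u)^\ell$ and one shows this equals $A$ using $A=(A^u)^\ell$; dually $A = \bigwedge\{(\{q\}^u)^\ell \mid q \in A^u\}$, and each such principal upper cut is again in $\varphi(P)$ up to identification, giving meet-density. Second, for the converse, given a complete lattice $L$ with $P \subseteq L$ both join-dense and meet-dense, I would define $\psi\colon L \to \DM(P)$ by $\psi(x) = \{p \in P \mid p \le x\}$ and show (i) $\psi(x)$ is always a cut, i.e.\ equals $(\psi(x)^u)^\ell$, using meet-density of $P$ in $L$; (ii) $\psi$ is order-preserving and order-reflecting, hence injective; (iii) $\psi$ is surjective, using join-density to realize any cut $A$ as $\psi(\bigvee_L A)$; and (iv) $\psi \circ \varphi^{-1}$ restricted to $\varphi(P)$ is the identity on the copy of $P$, i.e.\ $\psi$ agrees with $\varphi$ on $P$. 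These are all routine verifications once the two density hypotheses are in hand, and I would not grind through them but simply cite \cite[Theorem~2.36]{DaPr02} for the complete details.

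The only mildly delicate point --- and the place a reader might stumble --- is checking that $\psi(x) = \{p \in P : p \le x\}$ is genuinely a Dedekind--MacNeille cut rather than merely a down-set: one needs $(\psi(x)^u)^\ell \subseteq \psi(x)$, and this is exactly where meet-density of $P$ in $L$ is used, since $x = \bigwedge_L\{q \in P : q \ge x\}$ forces any $p \in P$ below all those $q$ to lie below $x$. Symmetrically, surjectivity of $\psi$ leans on join-density via $x := \bigvee_L A$ satisfying $\psi(x) = A$. Since both hypotheses are assumed, there is no real obstacle; the proof is a formal diagram chase. Accordingly I would keep the write-up short, state that the argument is the classical one, and refer to \cite{DaPr02} for the verifications.
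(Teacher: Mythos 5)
Correct: the paper itself offers no proof of this statement, simply quoting it as Theorem~2.36 of \cite{DaPr02}, and your proposal does exactly the same thing, citing that reference while adding an accurate sketch of the standard polarity argument (join/meet-density of $\varphi(P)$, and the map $\psi(x)=\{p\in P \mid p\le x\}$ for the converse). Nothing further is needed.
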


Using this result we are ready to construct the Dedekind-MacNeille
completion of $P^M$ where $M$ is a nonlinear affine subspace of $V$ of
dimension at least $2$.  Given Proposition~\ref{prop:lattice-bowties}
and Theorem~\ref{thm:hyp-bowtie}, it should not be too surprising that
the additional elements are closely related to the locations of the
bowties in $P^M$.

\begin{defn}[Augmenting $P^M$]
  Let $P^M$ be a hyperbolic poset with $M$ a nonlinear affine subspace
  of $V$.  We define a poset $\widetilde P^M$ that contains $P^M$ as
  an induced subposet.  The additional elements are of the form $n^U$
  where $U$ is a proper nontrivial linear subspace of $\dir(M)$ and
  they are ordered by inclusion, i.e. $n^U < n^{U'}$ iff $U \subset
  U'$.  We also set $n^U < h^{M'}$ iff $U \subset \dir(M')$ and $e^B <
  n^U$ iff $\dir(B)^\perp \subset U$.
\end{defn}

\begin{prop}[Complete lattice]\label{prop:complete}
  Every augmented hyperbolic poset is a complete lattice.
\end{prop}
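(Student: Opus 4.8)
The plan is to use the characterization of lattices in terms of bowties (Proposition~\ref{prop:lattice-bowties}) together with the Dedekind--MacNeille characterization theorem (Theorem~\ref{thm:dm-char}). First, I would verify that $\widetilde P^M$ is a bounded graded poset: it has the same top element $h^M$ as $P^M$, the same collection of minimal elements (the points $e^x$ of $E$), and the new elements $n^U$ fit into the grading by assigning $n^U$ the rank $\dim(U)+1$, which is consistent with the covering relations $e^B < n^U$ (when $\dir(B)^\perp$ is a hyperplane in $U$) and $n^U < h^{M'}$ (when $\dir(M') = U$). Since a bounded graded lattice is automatically complete, it then suffices to show $\widetilde P^M$ is a lattice, i.e.\ that it contains no bowties.

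The heart of the argument is a bowtie-elimination analysis, structured exactly like the proof of Theorem~\ref{thm:hyp-bowtie}. By Theorem~\ref{thm:hyp-bowtie} the only bowties in $P^M$ itself are the configurations $(h^{M_1},h^{M_2}:e^{B_1},e^{B_2})$ with $\dir(M_1)=\dir(M_2)=U$ and $\dir(B_1)=\dir(B_2)=U^\perp$ for some proper nontrivial $U\subset\dir(M)$; in $\widetilde P^M$ each such configuration is killed because $n^U$ is now a common upper bound for $e^{B_1}$ and $e^{B_2}$ strictly below both $h^{M_1}$ and $h^{M_2}$ (indeed $h^{M_1}\wedge h^{M_2}=n^U$), so $h^{M_1}$ and $h^{M_2}$ are no longer minimal upper bounds for $\{e^{B_1},e^{B_2}\}$. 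What remains is to check that no \emph{new} bowtie is created involving one or more of the elements $n^U$. I would do this by a short case analysis on how many of the four elements $a,b,c,d$ of a hypothetical bowtie are of the form $n^{\bullet}$. The key structural observations are: (i) the elements $n^U$ form a chain-free sublattice isomorphic to the lattice of proper nontrivial subspaces of $\dir(M)$ augmented top and bottom, so two $n$-elements always have a unique meet and join among the $n$-elements; (ii) meets of the form $n^U\wedge h^{M'}$, $n^U\wedge e^B$, $n^U\vee h^{M'}$, $n^U\vee e^B$ are each computed by a single linear-algebra formula (intersection or span of the relevant subspaces), so they are unique; and (iii) by construction $n^U$ sits strictly between the elliptic elements with $\dir(B)^\perp\subset U$ and the hyperbolic elements with $U\subset\dir(M')$, which is precisely the ``missing'' join/meet that caused the old bowties. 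Combining (i)--(iii), every pair of elements in $\widetilde P^M$ has a unique least upper bound and greatest lower bound, so there are no bowties and $\widetilde P^M$ is a lattice, hence a complete lattice.

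Alternatively — and this is the cleaner route I would actually write up — I would invoke Theorem~\ref{thm:dm-char} directly. It suffices to exhibit a complete lattice $L$ containing $\widetilde P^M$ as a join-dense and meet-dense subset; then $\widetilde P^M$, being all of $L$ once one checks the new elements are forced, is itself the completion and in particular a complete lattice. Concretely, one checks that in $\widetilde P^M$ every element is a join of elliptic elements $e^B$ (the minimal elements lying below it) and dually a meet of hyperbolic elements $h^{M'}$ together with the top — here the role of the $n^U$ is exactly to supply joins of families of $e^B$'s whose directions' orthogonal complements span a proper subspace $U$, and dually meets of families of $h^{M'}$'s. So the elliptic and hyperbolic elements are join-dense and meet-dense respectively in $\widetilde P^M$, they already were join-dense and meet-dense in $\DM(P^M)$, and one verifies that $\DM(P^M)$ has no elements beyond those of $\widetilde P^M$ by matching cuts: a cut $(Q^u)^\ell = Q$ that is not principal must have $Q^u$ a family of hyperbolic elements with a common direction subspace $U$ and no $h$-element as supremum, forcing $Q = \{e^B : \dir(B)^\perp\subset U\}=\{n^U\}^\ell$. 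This identifies $\DM(P^M)$ with $\widetilde P^M$ and completes the proof.

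The main obstacle I anticipate is the bookkeeping in the case analysis of the first approach — specifically making sure that a mixed configuration such as $(n^U, h^{M'} : e^{B_1}, e^{B_2})$ or $(h^{M_1}, h^{M_2} : n^{U_1}, n^{U_2})$ cannot be a bowtie, which comes down to showing that the relevant meet or join, computed by the subspace formulas in Remark~\ref{rem:max-lower} extended to $n$-elements, is genuinely unique and genuinely comparable to the purported bowtie elements. The Dedekind--MacNeille route sidesteps most of this by reducing everything to identifying the non-principal cuts of $P^M$, which Theorem~\ref{thm:hyp-bowtie} has essentially already pinned down; so I expect that to be the shorter and more robust write-up.
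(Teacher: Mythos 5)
Your two strategies are genuinely different from the paper's proof, which is a direct verification: for an arbitrary subset $Q\subset\widetilde P^M$ the paper simply writes down $\bigmeet Q$ (an elliptic $e^C$ if $Q$ contains an elliptic; $h^{M'}$ with $M'=\bigcap M_i$ if $Q$ consists of hyperbolics with a common point; otherwise $n^U$ with $U=(\bigcap U_j)\cap(\bigcap\dir(M_i))$, or $e^E$ when that subspace is trivial), with joins handled dually -- so neither the bowtie criterion nor Dedekind--MacNeille theory is invoked at this stage. Either of your routes could be made to work, but as written both leave the decisive verification unproved, and there is one genuine structural error.

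Concretely: (a) you have the order on elliptic elements backwards. Since $e^B\leq e^{B'}$ iff $B\supset B'$, the unique minimum of $P^M$ and of $\widetilde P^M$ is $e^E$; the points $e^x$ are not minimal (when they lie in $P^M$ at all they are coatoms). Boundedness below is a hypothesis both of Proposition~\ref{prop:lattice-bowties} and of the fact that a bounded graded lattice is complete, so this must be stated correctly, though the fix is easy. The same reversal undermines your density claim: hyperbolic elements are \emph{not} meet-dense in $\widetilde P^M$, because the relation $e^B<h^{M'}$ depends only on $\dir(B)$, so parallel subspaces $B\neq B'$ with $\dir(B)=\dir(B')$ lie below exactly the same hyperbolic elements and $e^B$ is never a meet of hyperbolics; only the weaker (and sufficient) statement that each new element $n^U$ is a join and a meet of elements of $P^M$ holds, and in the paper that is Theorem~\ref{thm:dm}, deduced \emph{after} completeness. (b) Your ``cleaner'' route has a circularity to dodge: Theorem~\ref{thm:dm-char} presupposes that the ambient poset is already a complete lattice, which is exactly what is to be proved, so you must instead compute $\DM(P^M)$ from its definition and classify the cuts of $P^M$; your one-sentence ``forcing'' claim about non-principal cuts is the entire content of that computation and is only asserted. (c) In the bowtie route, observations (i)--(iii) assert the uniqueness of the relevant meets and joins rather than prove it -- proving them amounts to the same subspace computations the paper performs directly -- and you would also need to check gradedness of the augmented poset (your rank $\dim(U)+1$ for $n^U$ is right, but the covering relations need verification) before Proposition~\ref{prop:lattice-bowties} applies. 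In short: plausible alternative outlines, but the key steps are missing and the minimal-element and meet-density assertions are false as stated.
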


\begin{proof}
  This is straightforward.  As an illustration we sketch the proof
  that arbitrary meets exist and leave the existence of arbitrary
  joins as an easy exercise.  Let $Q \subset \widetilde P^M$ be a
  collection of hyperbolic elements $h^{M_i}$, new elements $n^{U_j}$
  and elliptic elements $e^{B_k}$. If there is at least one elliptic
  in $Q$ then $\bigmeet Q$ is the elliptic $e^C$ where $C$ is the
  smallest affine subspace containing each $B_k$ and where $\dir(C)$
  must contains certain directions determined by the $M_i$ and $U_j$.
  If there are only hyperbolics in $Q$ and the $M_i$'s have a point in
  common then $\bigmeet Q = h^{M'}$ where $M' = \bigcap M_i$.
  Finally, if the $M_i$'s do not have a common point or if there is at
  least one new element in $Q$, then either $\bigmeet Q = n^U$ when $U
  = (\bigcap U_j) \cap (\bigcap \dir(M_i))$ is nontrivial or $\bigmeet
  Q = e^E$ when the subspace $U$ defined in this way is trivial.
\end{proof}

\begin{thm}[Intervals and lattice completions]\label{thm:dm}
  For each affine subspace $M$ in $E$, the augmented poset $\widetilde
  P^M$ is a complete lattice containing $P$ as a meet-dense and
  join-dense subset.  As a consequence, $\widetilde P^M$ is the
  Dedekind-MacNeille completion of $P^M$.
\end{thm}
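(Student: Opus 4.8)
The plan is to invoke the characterization of Dedekind--MacNeille completions in Theorem~\ref{thm:dm-char}. Since Proposition~\ref{prop:complete} already establishes that $\widetilde P^M$ is a complete lattice and $P^M$ sits inside it as an induced subposet, it suffices to verify the two density conditions: every element of $\widetilde P^M$ is a join of elements of $P^M$ and every element of $\widetilde P^M$ is a meet of elements of $P^M$. The only elements of $\widetilde P^M$ not already in $P^M$ are the new elements $n^U$, so I only need to realize each $n^U$ as both a join and a meet of old elements; for genuine elements of $P^M$ this is trivial since $p$ is the join (and meet) of the singleton $\{p\}$.

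First I would compute joins and meets in $\widetilde P^M$ explicitly, relying on the formulas implicit in the proof of Proposition~\ref{prop:complete} together with Remark~\ref{rem:max-lower} and Proposition~\ref{prop:max-lower}. The key observation is that $n^U$ should be realized as the join of a family of elliptic elements and as the meet of a family of hyperbolic elements — this is exactly the shape of the bowties described in Theorem~\ref{thm:hyp-bowtie}, which is the hint the authors drop just before the augmentation is defined. Concretely, for a fixed proper nontrivial subspace $U \subset \dir(M)$, I would consider the set of all elliptic elements $e^B$ in $P^M$ with $\dir(B)^\perp \subset U$, i.e. those satisfying $e^B < n^U$; I claim their join is exactly $n^U$. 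Dually, I would take all hyperbolic elements $h^{M'}$ in $P^M$ with $U \subset \dir(M')$, i.e. those with $n^U < h^{M'}$, and claim their meet is $n^U$. In each case one checks that $n^U$ is an upper (resp. lower) bound, and that no smaller upper bound (resp. larger lower bound) exists: an upper bound for all such $e^B$ must be a hyperbolic $h^{M''}$ or a new element $n^{U''}$ whose associated subspace contains $U$, so the least such is $n^U$ itself; symmetrically for the meet. One needs enough elliptic elements $e^B$ with $\dir(B)^\perp$ ranging over all hyperplanes (or all lines — whichever is needed) of $U$, and enough hyperbolic elements $h^{M'}$ with $\dir(M')$ ranging over the subspaces of $\dir(M)$ containing $U$; these exist because $M$ has dimension at least, well, because $U$ is a proper nontrivial subspace of $\dir(M)$ there is room to spare, and affine subspaces with prescribed direction always exist.

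The main obstacle is the bookkeeping in the density verification: one must make sure that the supremum of the chosen family of elliptics in $\widetilde P^M$ is computed \emph{in} $\widetilde P^M$ and lands on $n^U$ rather than overshooting to some $h^{M'}$ or undershooting — i.e. that the new elements $n^U$ are genuinely needed to make these joins exist, which is precisely why $P^M$ itself fails to be a lattice. This is really a matter of carefully reading off the join/meet formulas from the proof of Proposition~\ref{prop:complete} and confirming that when $Q$ consists of elliptics $e^{B_k}$ with $\bigcap_k \dir(B_k)^\perp$ equal to a nontrivial proper subspace $U$ of $\dir(M)$, the join is $n^U$; dually for meets of hyperbolics. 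Once both density statements are in hand, Theorem~\ref{thm:dm-char} immediately identifies $\widetilde P^M$ with $\DM(P^M)$, and the theorem follows. (One small caveat to flag: the statement says ``affine subspace $M$ in $E$'' but clearly means a nonlinear affine subspace $M$ of $V$ of dimension at least $2$, as in the surrounding discussion; I would state the proof for that case, the remaining cases being those where $P^M$ is already a complete lattice and $\widetilde P^M = P^M$.)
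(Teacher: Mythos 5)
Your proposal is correct and follows essentially the same route as the paper: reduce via Proposition~\ref{prop:complete} and Theorem~\ref{thm:dm-char} to showing each new element $n^U$ is a join and a meet of elements of $P^M$, guided by the bowtie structure of Theorem~\ref{thm:hyp-bowtie}. The paper simply realizes $n^U$ more economically as the join of just two elliptics $e^{B_1},e^{B_2}$ and the meet of just two hyperbolics $h^{M_1},h^{M_2}$ taken from a single bowtie, rather than the full families you use; your caveat about the theorem's phrasing (nonlinear $M\subset V$) matches the intended reading.
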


\begin{proof}
  By Proposition~\ref{prop:complete} and Theorem~\ref{thm:dm-char} we
  only need to show that the new elements are meets and joins of
  elements in $P^M$, but this is trivially true by
  Theorem~\ref{thm:hyp-bowtie} since each new element $n^U$ is
  associated with a bowtie in $P^M$.  In the terminology used there,
  $n^U$ is the meet of $h^{M_1}$ and $h^{M_2}$ and it is the join of
  $e^{B_1}$ and $e^{B_2}$.
\end{proof}

\providecommand{\bysame}{\leavevmode\hbox to3em{\hrulefill}\thinspace}
\providecommand{\MR}{\relax\ifhmode\unskip\space\fi MR }
\providecommand{\MRhref}[2]{%
  \href{http://www.ams.org/mathscinet-getitem?mr=#1}{#2}
}
\providecommand{\href}[2]{#2}


\begin{thebibliography}{Wal63}

\bibitem[BH99]{BridsonHaefliger99}
Martin~R. Bridson and Andr{\'e} Haefliger, \emph{Metric spaces of non-positive
  curvature}, Grundlehren der Mathematischen Wissenschaften [Fundamental
  Principles of Mathematical Sciences], vol. 319, Springer-Verlag, Berlin,
  1999. \MR{2000k:53038}

\bibitem[BM10]{BrMc10}
Tom Brady and Jon McCammond, \emph{Braids, posets and orthoschemes}, Algebr.
  Geom. Topol. \textbf{10} (2010), no.~4, 2277--2314. \MR{2745672}

\bibitem[BW02]{BradyWatt02}
Thomas Brady and Colum Watt, \emph{A partial order on the orthogonal group},
  Comm. Algebra \textbf{30} (2002), no.~8, 3749--3754. \MR{MR1922309
  (2003h:20083)}

\bibitem[Die71]{Dieudonne71}
Jean~A. Dieudonn{\'e}, \emph{La g\'eom\'etrie des groupes classiques},
  Springer-Verlag, Berlin, 1971, Troisi{\`e}me {\'e}dition, Ergebnisse der
  Mathematik und ihrer Grenzgebiete, Band 5. \MR{0310083 (46 \#9186)}

\bibitem[DP02]{DaPr02}
B.~A. Davey and H.~A. Priestley, \emph{Introduction to lattices and order},
  second ed., Cambridge University Press, New York, 2002. \MR{MR1902334
  (2003e:06001)}

\bibitem[McC]{Mc-lattice}
Jon McCammond, \emph{Dual euclidean {A}rtin groups and the failure of the
  lattice property}, Preprint.

\bibitem[MS]{McSu-euclidean}
Jon McCammond and Robert Sulway, \emph{{A}rtin groups of euclidean type},
  Preprint.

\bibitem[Sch50]{Scherk50}
Peter Scherk, \emph{On the decomposition of orthogonalities into symmetries},
  Proc. Amer. Math. Soc. \textbf{1} (1950), 481--491. \MR{0036762 (12,157c)}

\bibitem[ST89]{SnapperTroyer89}
Ernst Snapper and Robert~J. Troyer, \emph{Metric affine geometry}, second ed.,
  Dover Books on Advanced Mathematics, Dover Publications Inc., New York, 1989.
  \MR{1034484 (90j:51001)}

\bibitem[Sta97]{EC1}
Richard~P. Stanley, \emph{Enumerative combinatorics. {V}ol. 1}, Cambridge
  Studies in Advanced Mathematics, vol.~49, Cambridge University Press,
  Cambridge, 1997, With a foreword by Gian-Carlo Rota, Corrected reprint of the
  1986 original. \MR{98a:05001}

\bibitem[Tay92]{Taylor92}
Donald~E. Taylor, \emph{The geometry of the classical groups}, Sigma Series in
  Pure Mathematics, vol.~9, Heldermann Verlag, Berlin, 1992. \MR{1189139
  (94d:20028)}

\bibitem[Wal63]{Wall63}
G.~E. Wall, \emph{On the conjugacy classes in the unitary, symplectic and
  orthogonal groups}, J. Austral. Math. Soc. \textbf{3} (1963), 1--62.
  \MR{0150210 (27 \#212)}

\end{thebibliography}
\end{document}